\documentclass[12pt]{amsart}%
\usepackage{amsfonts}
\usepackage{epsfig}
\usepackage{graphicx}
\usepackage{amsmath}
\usepackage{amsfonts}
\usepackage{amssymb}
\usepackage{latexsym}
\usepackage{rotating}
\usepackage{pstricks, pst-node, pst-text, pst-3d}
\usepackage{amsbsy}
\usepackage{bm}
\usepackage{dsfont}
\usepackage{lscape}
\usepackage{verbatim}
\usepackage{slashbox}
\usepackage{pict2e}
\usepackage{marginnote}

\setcounter{MaxMatrixCols}{30}
\input{amssym.def}
\setlength{\textwidth}{5.5in}
\newtheorem{theorem}{Theorem}
\newtheorem{proposition}{Proposition}
\newtheorem{corollary}{Corollary}
\newtheorem{lemma}{Lemma}
\newtheorem{remark}{Remark}

\newtheorem{definition}{Definition}
\theoremstyle{remark}


\newcommand{\spn}{\text{\rm span}\,}

\newcommand{\Endo}{\text{\rm End}}
\newcommand{\rank}{\text{\rm rank\,}}
\newcommand{\id}{\text{\rm id}}




\input epsf
\textwidth=16cm
\textheight=22cm
\hoffset=-1.5cm
\voffset=-0.3cm
\begin{document}
\title[Free nilpotent and $H$-type Lie algebras...]{Free nilpotent and $H$-type Lie algebras. Combinatorial and orthogonal designs}
\author[K.~Furutani, I.~Markina,  and A.~Vasil'ev]{Kenro Furutani$^{\dag}$, Irina Markina$^{\ddag}$, and Alexander Vasil'ev$^{\ddag}$}

\thanks{The author$^{\dag}$ was supported by the Grand-in-aid for
  Scientific Research (C) No. 23540251 of JSPS (Japan Society for the
  Promotion of Science).  The authors$^{\ddag}$ have been  supported by the grants of the Norwegian Research Council \#204726/V30,
\#213440/BG, \#239033/F20; and EU FP7 IRSES program STREVCOMS, grant  no.
PIRSES-GA-2013-612669}
 
\subjclass[2010]{Primary: 15A66 17B30, 22E25; Secondary: 05B30, 05C20, 05C70, 94B60
30C62} \keywords{Nilpotent Lie group, $H$-type Lie group, orthogonal design, Hurwitz-Radon function, Steiner tournament}

\address{K.~Furutani:  Department of Mathematics, Faculty of Science and Technology, Science University of Tokyo, 2641 Yamazaki, Noda, Chiba (278-8510), Japan}
\email{furutani\_kenro@ma.noda.tus.ac.jp}

\
\address{I.~Markina and A.~Vasil'ev: Department of Mathematics, University of Bergen, P.O.~Box 7803,
Bergen N-5020, Norway}
\email{irina.markina@uib.no}
\email{alexander.vasiliev@math.uib.no}

\begin{abstract}
The aim of our paper is to construct pseudo $H$-type algebras from the covering free nilpotent two-step Lie algebra as the quotient algebra by an ideal. We propose an explicit algorithm
of construction of such an ideal by making use of a non-degenerate scalar product. Moreover, as a byproduct result, we recover the existence of a rational structure on pseudo $H$-type algebras,
which implies the existence of lattices on the corresponding pseudo $H$-type Lie groups.
Our approach  substantially uses combinatorics and reveals the interplay of   pseudo $H$-type algebras with combinatorial and orthogonal designs.
One of the key tools is the family of Hurwitz-Radon orthogonal matrices.
\end{abstract}
\maketitle

\section{Introduction}

Any nilpotent Lie algebra is known \cite{Sato} to be obtained as a coset of a free nilpotent algebra by a quotient ideal. However, it is not clear how this ideal can be recovered according
to the properties of the given nilpotent algebra. We propose an explicit construction of the ideal and of the quotient map by making use of a non-degenerate scalar product to obtain
pseudo $H$-type algebras, i.e., the graded two-step nilpotent Lie algebras introduced in \cite{Ciatti, GKM, Kaplan} and intimately related to representations of Clifford algebras and
compositions of quadratic forms. Moreover, as a byproduct result, we recover the existence of a rational structure on pseudo $H$-type algebras,
which implies the existence of lattices on the corresponding pseudo $H$-type Lie groups, giving a new and essentially different proof of the corresponding result in~\cite{FurutaniMarkina}, see also \cite{Eber04, CrDod}.
 Mal'cev~\cite{Malc} introduced a {\it nilmanifold} in 1951 as a compact manifold with a connected, simply connected nilpotent Lie group acting on it.  A nilmanifold diffeomorphic  to the quotient space $L\backslash G$ of a nilpotent Lie group $G$ where $L$ is a uniform lattice. They represent an example of a  homogeneous space  which plays an important role in geometry and harmonic analysis, as well as in arithmetic combinatorics and ergodic theory more recently. Mal'cev's criterion \cite{Malc} guarantees the existence of  a uniform lattice $L\subset G$ provided rational structural constants
for the nilpotent Lie algebra $\mathfrak g$ of the Lie group $G$. So pseudo $H$-type Lie groups with lattices provide a broad class of nilmanifolds which can be studied and classified.

The proposed approach is expected to contribute to a classification of $H$-type and pseudo $H$-type algebras, in particular
to a description of the classes of lattices that admit every pseudo $H$-type group.  To our knowledge, the complete classification 
  of lattices is known only in the case of Heisenberg groups (\cite{GW}
  and also see \cite{TY}). This would also lead to a possible classification of non-diffeomorphic nilmanifold. However, these topics exceed the scope of this paper and are subject of future research.
The study of infinitesimal symmetries of pseudo $H$-type groups and the comparison with the Tanaka prolongations of simple algebras factorised by parabolic subgroups
are closely linked. We hope that our approach will allow us to contribute to the beautiful theory of simple groups. 

At the end we show relations with combinatorics and orthogonal designs   using our construction for pseudo $H$-type groups instead of the classical
application of Clifford algebras. In particular, we apply our results to square semiregular 1-factorization of a complete graph $K_{2n}$ and we fix a Geramita--Seberry Wallis problem on a maximal number of variables in an amicable orthogonal design \cite[Problem 5.17]{Geramita79}, where 
the orthogonality is understood with respect of a non-degenerate metric with the neutral signature.

The structure of the paper is as follows. 
In Section~2, we provide an overview of one of the main tools, the Hurwitz-Radon family of orthogonal matrices, and necessary background from graph theory. As a motivation, we attract reader's attention to interrelations of our results and methods with harmonic analysis. We also define uniform lattices on Lie groups and describe relations between them and the presence of rational structural constants for the corresponding Lie algebras.
We give precise definitions and preliminaries  in Section 3. We prove the main results on construction of quotient maps and the  existence of rational structural constants in Section 4. Section~4.3 is dedicated to the classical case of $H$-type algebras with a positive definite metric and Section~4.4 generalizes it to pseudo $H$-type algebras. Section 5  gives connections
of the main results to combinatorial and orthogonal designs.
\medskip

\noindent
{\bf Acknowledgement.} The work on this paper  started at the Mittag-Leffler institute (Sweden) in the fall 2012, the program `Hamiltonians in Magnetic Fields',  and continued at the
Science University of Tokyo (Japan) in November-December 2012. The authors express their gratitude to all the staff of these institutions for their generous support. We are thankful to professor Anthony V.~Geramita and to the anonymous referee who drew our attention to the excellent monograph~\cite{Geramita79}, which shortened several parts of the paper and provided a link to  problems in orthogonal designs.

\section{Interrelations and motivations} 

The problem of composition of positive definite quadratic forms was proposed by Adolf  Hurwitz \cite{Hurwitz2} as early as in 1898. 
He \cite{Hurwitz}  and Johann Radon \cite{Radon} reformulated this problem  in 1922 to the problem of existence of a family of orthogonal $n\times n$--matrices $E_1,\dots, E_s$ satisfying 
the conditions
\[
E_j^2=-I,\quad E_iE_j+E_jE_i=0, \quad \text{for $j=1,\dots, s$ and $i\neq j$,} 
\]
where $I$ stands for the identity $(n\times n)$--matrix. This family is now known as the {\it Hurwitz-Radon (HR) family}.
One of their main results states that the maximal number of the matrices in the HR family is $\rho(n)-1$, where $\rho(n)$ is the Hurwitz-Radon function defined as  $\rho(n):=8\alpha+2^\beta$, where $n$ is uniquely represented by $n=u2^{4\alpha+\beta}$  with $u$  odd, $\beta=0,1,2$ or $3$. Later in 1943, Eckmann \cite{Eckmann} simplified the Radon's proof  using representation theory and group characters. The maximal number is achievable even by integer matrices, see \cite{GP}. 

The Hurwitz-Radon result is used in a bunch of pure and applied problems. In particular, the composition of quadratic forms exists if and only if there exists a representation of the Clifford algebras~\cite{Chevalley}, such that the representation space admits a quadratic form with some special property. It is used in the problem of existence of linearly independent vector fields  constructed on a sphere in $n$-dimensional Euclidean space~\cite{Adams}, in quantum mechanics as well as in in electronics \cite{Citko}, in space-time block coding (STBC) and orthogonal design~\cite{TJC}, in signal processing and computer vision~\cite{Jak}.

An important development in quantum computing, STBC and wireless communication happened after Wolfe~\cite{Wolfe} generalized Hurwitz-Radon result to non-positive definite quadratic forms and  matrices from an HR($s,t$) family such that $s$ first matrices satisfy the condition $E_j^2=-I$ but $t$ following matrices satisfy the condition $E_j^2=I$.
In our paper we essentially use this generalization as one of the main tools. 

Another group of problems related to our presentation is of graph theory and combinatorics nature.  It refers to a relationship between three concepts. The first one is {\it Hamiltonian graphs}, i.e.,
graphs containing a spanning cycle. The second one is a {\it square 1-factorization of a graph}.  A  1-factorization of a graph is a decomposition of all the edges of the graph into 1-factors, the sets of $k$ independent edges (without common vertices).   A graph admits a square 1-factorization if  the union of any two distinct 1-factors is a disjoint union of 4-cycles. The third concept is that of a {\it vertex transitive graph}, i.e., a graph such that the automorphism group acts transitively upon the vertices.  If a connected graph has a square 1-factorization, then it is vertex transitive and Hamiltonian.
 Ihrig~\cite{Ihrig} proved that  a graph admits a square 1-factorization if and only if it is a Cayley graph with the group  of automorphisms $(\mathbb Z_2)^n$ for some $n$. Since Cayley graphs of abelian groups are known to be Hamiltonian, graphs that admit square 1-factorization are Hamiltonian Cayley. Kobayashi and Nakamura~\cite{KN} proved that a complete graph $K_{2k}$ admits a square 1-factorization if and only if $k=2^n$, $n\geq 1$. As a side result, we generalize this  stating that there exists a 1-factorization of a complete graph $K_{2k}$ such that at least  $\rho(2k)-1$  one-factors satisfy the condition of square 1-factorization.
Relation with combinatorics  and space-time block coding are discussed in the last two sections of the paper.

Another perspective is coming from Fourier analysis  and its multivariate analogue on the torus $\mathbb T^n:=\mathbb Z^n\backslash\mathbb R^n$.
If $(M,g)$ is a smooth compact Riemannian manifold, then the Laplace operator $\Delta$ defined by $\Delta f=-\text{div}(\text{grad}(f))$, $f\in L^2(M,g)$, possesses a discrete spectrum $0=\lambda_0\leq\lambda_1\leq\dots\to\infty$. This simple fact naturally implies many interesting
problems related by a general question: how much  of the geometry of the manifold is determined by this spectrum. One of the seminal
papers in this direction appeared in 1966 by Mark Kac~\cite{Kac}. He asked whether the spectrum of the Laplacian on a compact planar domain with a boundary, acting on smooth functions vanishing on the boundary, determined its shape. This question was negatively answered~\cite{Gordon} in 1992.
Jeff Cheeger established the existence of a discrete spectrum  to Riemannian manifolds with conical singularities in a series of papers
about 1980, see, e.g., \cite{Cheeger}, which was further extended \cite{Muller} to the Riemannian manifolds with cusps. Another type of singular manifolds is provided by sub-Riemannian geometry. An interpretation of sub-Riemannian geometry can by thought of as follows. Take a Riemannian manifold $(M,g)$ and an orthonormal frame $(X_1,\dots, X_n)$ with respect to $g$. Define a family of Riemannian metrics
$g_{\varepsilon}$, $\varepsilon>0$, in $M$ by requiring that $(X_1,\dots, X_m,\varepsilon X_{m+1},\dots, \varepsilon X_n)$ is an orthonormal frame. Then the Gromov-Hausdorff limit of $(M,g_{\varepsilon})$ as $\varepsilon\to 0$ is a sub-Riemannian manifold (the inverse statement, in general, is not true).
 Equivalently it can be understood that, the Riemannian fiber metric $g_0$ is given only on a sub-bundle in $H\subset TM$, and then the triplet $(M,H,g_0)$ is called a sub-Riemannian manifold. Correspondingly, the Laplacian operator $\Delta$ can be changed to the sub-Laplacian one $\Delta_{SR}$, where the divergence and gradient are properly defined.
There exists a large amount of literature developing sub-Riemannian 
geometry. Typical general references are \cite{Mon, Str1, Str2}.  The simplest example of a sub-Riemannian manifold is the Heisenberg group $\mathbb H^1$ which is topologically $\mathbb R^3$, and the distribution is given by 
$$
H=\mbox{ker}\,{\omega}=\{(v_1,v_2,v_3),\,\,v_3-\frac{1}{2}(xv_2-yv_1)=0\}\subset\mathbb R^3.
$$
Equivalently, one defines $H_p=$span$(X_p,Y_p)$, $p\in \mathbb H^1$, where
\[
X_p=(\partial_x-\frac{1}{2}y\partial_z),\quad Y_p= (\partial_y+\frac{1}{2}x\partial_z),\quad p=(x,y,z),
\]
and $H$ is a subbundle of the tangent bundle $T\mathbb R^3$. We have $[X,Y]=Z=\partial_z$, and span$(X,Y,[X,Y])_p=\mathbb R^3$, a condition called bracket generating or H\"ormander, which guarantees the sub-Laplacian operator $\Delta_{SR}=-\frac{1}{2}(X^2+Y^2)$ to be hypoelliptic.

Next we discuss nilmanifolds which are left cosets $L\backslash G$ of a nilpotent Lie group $G$ where $L$ is a uniform lattice, see Definition~\ref{NM}, Section~\ref{SNM}.

Let $\Delta_{SR}^G$ be the  left-invariant sub-Laplacian on a group $G$ and $L\backslash G$ be a nilmanifold. The quotient map $G\to L\backslash G$ defines a
Grusin-type operator  $\Delta_{SR}^{L\backslash G}$ on the nilmanifold $L\backslash G$. The operator $\Delta_{SR}^{L\backslash G}$ has  a discrete spectrum $\Lambda\subset [0,\infty)$ analogously to the
case of $\mathbb T^n:=\mathbb Z^n\backslash\mathbb R^n$. Spectral geometry of such kind of nilmanifolds and operators was developed in \cite{BF-1,BF-2,BFI-2}. 


Mal'cev criterion \cite{Malc} guarantees the existence of  a uniform lattice $L\subset G$ provided rational structural constants
for the nilpotent Lie algebra $\mathfrak g$ of the Lie group $G$. In this connection, the study of the existence of the rational structural constants for $H$-type algebras is  important
for developing spectral theory of Grusin-type operators on nilmanifolds generated by $H$-type groups. 
The $H$-type Lie algebras were
introduced by Kaplan in~\cite{Kaplan} and were widely studied, see,
for instance~\cite{CCM,ChMar, Cowling91, Kap2,Kor}.
In works~\cite{Ciatti,CP,GKM} the analogues of the classical $H$-type Lie 
algebras, pseudo $H$-type algebras, where the positive definite metrics were replaced  by non-degenrate  metrics with arbitrary signatures, were introduced and studied. 
In particular, the existence of rational structural constants in pseudo $H$-type algebras was proved in~\cite{FurutaniMarkina}.

\section{Preliminaries}

\subsection{Clifford algebras and their representations}


Let $V$ be a real vector space endowed with a non-degenerate quadratic form $Q(v)$, $v\in V$, which defines a symmetric bilinear form $\mathbf{q}(u,v)=\frac{1}{2}(Q(u+v)-Q(u)-Q(v))$ by polarization. Here and further on by saying {\it scalar product} we mean
non-degenerate symmetric real bilinear form and by {\it inner product} a positive definite one. 
 The {\it Clifford algebra} $Cl(V,\mathbf q)$, named after the English geometer  William Kingdon Clifford~\cite{Cliff}, is an associative unital algebra freely generated by $V$ modulo the relations
\[
v^2=-Q(v)\mathds{1}=-\mathbf{q}(v,v)\mathds{1}\quad\text{for all $v\in V$ or } uv+vu=-2\mathbf{q}(u,v)\mathds{1}\quad\text{for all $u,v\in V$}.
\]
For an introductory text, one may look at~\cite{Garling}. Every non-degenerate quadratic form on the $n$-dimensional vector space $V$ is equivalent to the standard diagonal form
\[
Q_{p,q}(v)=v_1^2+v_2^2+\dots +v_p^2-v_{p+1}^2-\dots-v_{p+q}^2, \quad n=p+q.
\]
Using isomorphism $(V,Q)\simeq \mathbb R^{p,q}=(\mathbb R^{p+q},Q_{p,q})$ we will write $Cl(V,\mathbf q)=Cl_{p,q}$.  Starting with an orthonormal basis $\ell_1,\dots, \ell_n$ in $\mathbb R^{p,q}$ one defines 
a  basis of $Cl_{p,q}$ by the sequence
\[
1,\dots, (\ell_{k_1}\cdot\ldots\cdot \ell_{k_j}),\,\,1\leq k_1<k_2<\dots<k_j\leq n,\,\,j=1,2,\dots, n.
\]
It follows that the dimension of $Cl_{p,q}$ is $2^n$, $n=p+q$. 
We have the following algebra isomorphisms: $Cl_{0,0}\simeq\mathbb R$, $Cl_{1,0}\simeq\mathbb C$, $Cl_{0,1}\simeq\mathbb R^2$ where the product in $\mathbb R^2$ is defined componentwise. Furthermore, $Cl_{2,0}\simeq\mathbb H$, where $\mathbb H$ denotes quaternions, and $Cl_{1,1}\simeq Cl_{0,2}\simeq \mathbb R(2)$, where $\mathbb R(2)$ denotes $2\times 2$ matrices with real entries. 

Clifford algebras enjoy the isomorphisms 
\[
Cl_{p+1,q+1}=Cl_{p,q}\otimes\mathbb R(2),\quad Cl_{p+4,q}=Cl_{p,q+4},
\]
and the Cartan-Bott periodicity theorem \cite{Atiyah, Chern} implies that
\[
Cl_{p+8,q}=Cl_{p+4,q+4}=Cl_{p,q+8}=Cl_{p,q}\otimes\mathbb R(16).
\]
If the signature satisfies $p-q=1$(mod 4), then $Cl_{p+k,q}=Cl_{p,q+k}$.

A {\it Clifford  module} for $Cl_{p,q}$ is a representation of a Clifford algebra given by a finite-dimensional real space $U$ and a linear map $\rho\colon Cl_{p,q}\to {\rm End}(U)$, satisfying the Clifford relation $\rho^2(v)=-\mathbf{q}(v,v){\id}_{U}$ or
$\rho(u)\rho(v)+\rho(v)\rho(u)=-2\mathbf{q}(u,v){\id}_{U}$ for $u,v\in\mathbb R^{p,q}$. An abstract theory of Clifford modules    was founded in \cite{Atiyah}.  


\subsection{Free nilpotent Lie algebras}
Let $N$ stand for a {\it free nilpotent Lie algebra} defined following, e.g.,  \cite{Jacobson, Sato}. Given a real vector space $U$  of dimension $m$ with a basis $\{e_1\dots,e_m\}$, called {\it generators}, we construct the space
\[
\mathcal U= U\oplus (U\otimes U)\oplus\dots \oplus (U\otimes\dots\otimes U).
\]
We introduce a distributive, non-associative and non-commutative operation $\times\colon$  $\mathcal U\times \mathcal U\to \mathcal U$  iteratively as follows.
If $e_k\in U$, then
\[
e_i\times (e_{j_1}\otimes \cdots \otimes e_{j_k})=
\begin{cases}
e_i\otimes e_{j_1}\otimes \cdots \otimes e_{j_k},\quad & \text{if $k\leq n-1$},\\
0,\quad &  \text{if $k=n$}.
\end{cases}
\]
Next iteratively, if $e\in U$ and $a\times b$ is already defined for a fixed $a\in \mathcal U$ and for every $b\in \mathcal U$, then
\[
(e\otimes a)\times b=e\otimes (a\times b)-a\times (e\otimes b). 
\]
Let us denote by $\mathcal W$ the left ideal of the elements represented as $v\times v$, for $v\in \mathcal U$.
Let $N=\mathcal U/\mathcal W$. Then $N$ carries the structure of a nilpotent Lie algebra with the Lie product 
\[
[ \langle a\rangle ,  \langle b\rangle  ]:= \langle a\times b \rangle,
\]
where $ \langle a \rangle$ denotes the equivalence class with the representative $a\in \mathcal U$.  The Jacobi identity
holds and the nilpotentness trivially follows from $N^{n+1}=0$. Abusing notations we shall write simply $a$ instead of  $\langle a\rangle$ in what follows. The algebra $N$ is the free nilpotent Lie algebra of length $n$ with $m$ generators.

\begin{theorem}{\rm \cite{Sato}}  Let $\mathfrak{n}$ be a nilpotent Lie algebra of length $n$ generated by $m$ linearly independent elements.
Then there exists an ideal $A$ of $N$ such that $\mathfrak{n}\simeq  N/A$.
\end{theorem}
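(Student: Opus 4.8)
The plan is to exploit the universal mapping property that the explicit construction of $N$ encodes: any assignment of the generators of $N$ to elements of a Lie algebra that is nilpotent of length at most $n$ extends to a Lie algebra homomorphism. Fix a generating set $x_1,\dots,x_m$ of $\mathfrak{n}$ and recall that $e_1,\dots,e_m$ are the generators of $N$. First I would define the linear map $\phi\colon U\to\mathfrak{n}$ by $\phi(e_i)=x_i$, then produce a surjective homomorphism $\Phi\colon N\to\mathfrak{n}$ extending $\phi$; setting $A:=\ker\Phi$, which is an ideal as the kernel of a homomorphism, the first isomorphism theorem for Lie algebras will give $\mathfrak{n}\simeq N/A$.

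To build $\Phi$, I would begin on $\mathcal{U}$ by prescribing its values on decomposable tensors,
\[
\Phi(e_{j_1}\otimes\dots\otimes e_{j_k})=\ad(x_{j_1})\cdots\ad(x_{j_{k-1}})\,x_{j_k}=[x_{j_1},[\dots,[x_{j_{k-1}},x_{j_k}]\dots]],
\]
extending linearly, with $\Phi|_U=\phi$. This formula is forced, since in $N$ one has $e_{j_1}\otimes\dots\otimes e_{j_k}=[e_{j_1},[\dots,[e_{j_{k-1}},e_{j_k}]\dots]]$ upon iterating $e_i\times(e_{j_1}\otimes\dots\otimes e_{j_k})=e_i\otimes e_{j_1}\otimes\dots\otimes e_{j_k}$. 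Because $\mathfrak{n}$ has nilpotency length $n$, every iterated bracket of more than $n$ of the $x_j$ vanishes, so $\Phi$ automatically respects the truncation built into $\times$ (the case $k=n$, where longer products are set to $0$).

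The main work, which I expect to be the principal obstacle, is to show that the linear map $\Phi$ on $\mathcal{U}$ is compatible with the two defining features of $N$. First, I would prove $\Phi(a\times b)=[\Phi(a),\Phi(b)]$ for all $a,b\in\mathcal{U}$ by induction on the length of $a$, the inductive step reducing, via the recursion $(e\otimes a)\times b=e\otimes(a\times b)-a\times(e\otimes b)$, to the Jacobi-type identity $[[X,A],B]=[X,[A,B]]-[A,[X,B]]$ valid in $\mathfrak{n}$, with the base case being the definition of $\Phi$ on a generator times a tensor. Second, since $\Phi(v\times v)=[\Phi(v),\Phi(v)]=0$ by antisymmetry of the bracket in $\mathfrak{n}$, and $\Phi$ is a $\times$-homomorphism, the entire left ideal $\mathcal{W}$ generated by the squares lies in $\ker\Phi$; hence $\Phi$ descends to a well-defined map on $N=\mathcal{U}/\mathcal{W}$, which by the first identity is a Lie algebra homomorphism for the bracket $[\langle a\rangle,\langle b\rangle]=\langle a\times b\rangle$.

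Finally, surjectivity is immediate: since $x_1,\dots,x_m$ generate $\mathfrak{n}$, every element of $\mathfrak{n}$ is a linear combination of iterated brackets of the $x_j$, each of which lies in $\Phi(N)$ by construction. Thus $\Phi(N)=\mathfrak{n}$, and with $A=\ker\Phi$ the first isomorphism theorem yields $\mathfrak{n}\simeq N/A$, as claimed. The linear independence of the $m$ generators of $\mathfrak{n}$ guarantees that their number matches the number of generators of $N$, so the same free nilpotent algebra $N$ of length $n$ on $m$ generators serves as the covering algebra.
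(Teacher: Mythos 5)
The paper does not actually prove this theorem: it is stated with the citation to Sato's 1971 article and is used as a black box to justify the existence of the projection $\pi\colon N\to\mathfrak{n}$, so there is no internal proof to measure your argument against. Your proposal supplies the missing standard argument, namely the universal property of the free nilpotent Lie algebra, and it is correct. The two points where such an argument could break are exactly the ones you isolate: first, that the linear extension $\Phi$ of $e_i\mapsto x_i$ to decomposable tensors intertwines $\times$ with the bracket of $\mathfrak{n}$, which your induction on the length of the first argument handles because the defining recursion $(e\otimes a)\times b=e\otimes(a\times b)-a\times(e\otimes b)$ maps under $\Phi$ precisely onto the Jacobi identity $[[X,A],B]=[X,[A,B]]-[A,[X,B]]$; and second, that the degree-$n$ truncation built into $\times$ is compatible with $\Phi$, which holds because all brackets of length $n+1$ vanish in $\mathfrak{n}$ (this is the only place the hypothesis on the length of $\mathfrak{n}$ enters). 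Given the homomorphism property, $\Phi(v\times v)=[\Phi(v),\Phi(v)]=0$, hence the whole left ideal $\mathcal{W}$ lies in $\ker\Phi$ and $\Phi$ descends to $N=\mathcal{U}/\mathcal{W}$; surjectivity holds because the image is a subalgebra containing the generating set $x_1,\dots,x_m$; and the first isomorphism theorem yields $A=\ker\Phi$ with $\mathfrak{n}\simeq N/A$. Beyond filling the gap, your construction makes the quotient map $\pi$ explicit, which is in the spirit of what the paper later does when it realizes specific ideals $A$ as orthogonal complements. One minor remark: the linear independence of the $m$ generators of $\mathfrak{n}$ is never used in your proof, since generation alone gives surjectivity; it only serves to match $m$ with the number of free generators of $N$ and would matter if one insisted that $m$ be minimal.
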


We denote by $\pi$ the projection $\pi\colon N\to \mathfrak{n}$. In particular, we are interested in free nilpotent Lie algebras of length 2.

\subsection{Pseudo $H$-type algebra}\label{eq:J_orth}

It is convenient for us to split the definition of
a pseudo $H$-type algebra in two parts.


\begin{definition}\label{metricLA}
We say that a Lie algebra $\mathfrak{n}\equiv(\mathfrak{n},[\cdot\,,\cdot],(\cdot\,,\cdot))$ is a two-step nilpotent metric Lie algebra if it satisfies the following properties:
\begin{itemize}
\item $[[\mathfrak n,\mathfrak n],\mathfrak n]=\{0\}$;
\item the scalar  product $(\cdot\,,\cdot)$ is non-degenerate;
\item $\mathfrak{n}=\mathfrak{h}\oplus_{\perp}\mathfrak{z}$ with respect to $(\cdot\,,\cdot)$, where $\mathfrak{z}$ is the center of $\mathfrak{n}$;
\item the restriction $(\cdot\,,\cdot)_{\mathfrak{z}}$ of $(\cdot\,,\cdot)$ to $\mathfrak{z}$ is non-degenerate.
\end{itemize}
\end{definition}

\begin{definition}\label{pseudoLA}
A two-step nilpotent metric Lie algebra  $\mathfrak{n}$ is called pseudo $H$-type algebra if the operator $J:\,\mathfrak{z}\times\mathfrak{h}\to \mathfrak{h}$ 
 defined by
 \begin{equation}\label{p1}
(J_z u,v)_{\mathfrak{h}}=(z, [u,v])_{\mathfrak{z}},
\end{equation}
satisfies the following orthogonality condition  
\begin{equation}\label{p2}
(J_zu,J_zv)_{\mathfrak{h}}=(z,z)_{\mathfrak{z}}(u,v)_{\mathfrak{h}}.
\end{equation}
\end{definition}

Given a metric two-step nilpotent Lie algebra $\mathfrak{n}$, we call the operator $J$ defined by \eqref{p1} satisfying \eqref{p2}, a pseudo {\it $H$-type structure} or simply an $H$-type structure in the case of a positive definite metric.

This definition for an inner product  (positive definite), is equivalent to that found in \cite{Kaplan}, and for a non-degenerate scalar  product, in \cite{Ciatti, GKM}. The definition~\eqref{p1} immediately implies the following properties of the operator $J$.
\begin{itemize}
\item The operator $J:\,\mathfrak{z}\times\mathfrak{h}\to \mathfrak{h}$  is bilinear. 
\item The operator
 $J_z\in\Endo(\mathfrak{h})$ for any fixed $z\in\mathfrak{z}$ is
 skew symmetric 
\begin{equation}\label{p4}
(J_z u, v)_{\mathfrak{h}}=-(u,J_z v)_{\mathfrak{h}},
\end{equation}
and
\begin{equation}\label{p3}
J^2_z=-(z,z)_{\mathfrak{z}}\id_{\mathfrak{h}};
\end{equation}
\item  The operator $J_z\colon \mathfrak{h}\to \mathfrak{h}$ is an isometry for all $z\in\mathfrak{z}$ with $(z,z)_{\mathfrak{z}}=1$;
\item  The operator $J_z\colon \mathfrak{h}\to \mathfrak{h}$ is an anti-isometry for all $z\in\mathfrak{z}$ with $(z,z)_{\mathfrak{z}}=-1$.
\end{itemize}

\begin{remark}\label{rem1}
Any two of \eqref{p2}, \eqref{p4}, and \eqref{p3} imply the third property.
\end{remark}

\begin{proposition}\cite[Proposition 2.2]{Ciatti}\label{pr4}
Let $\mathfrak{n}$ be a pseudo $H$-type algebra with a non-degenrate  scalar product $(\cdot\, ,\cdot)$. If the scalar product  $(\cdot\, ,\cdot)_{\mathfrak z}$ is non-positive definite, then necessarily, the signature of the scalar product $(\cdot\, ,\cdot)_{\mathfrak h}$ is neutral.
\end{proposition}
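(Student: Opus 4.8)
The plan is to exploit the involutive structure that $J_z$ acquires on a negative direction of the center. Since $(\cdot\,,\cdot)_{\mathfrak z}$ is non-degenerate and non-positive definite, it is negative definite, so I may choose $z\in\mathfrak z$ with $(z,z)_{\mathfrak z}=-1$. Substituting this into \eqref{p3} gives $J_z^2=\id_{\mathfrak h}$, so $J_z$ is an involution of $\mathfrak h$ with eigenvalues $\pm1$; I write $\mathfrak h=\mathfrak h_+\oplus\mathfrak h_-$ for the corresponding eigenspace decomposition. At the same time, \eqref{p2} reads $(J_zu,J_zv)_{\mathfrak h}=-(u,v)_{\mathfrak h}$, that is, $J_z$ is an anti-isometry of $(\mathfrak h,(\cdot\,,\cdot)_{\mathfrak h})$.

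The central observation I would establish is that \emph{both} eigenspaces are totally isotropic. Indeed, for $u,v\in\mathfrak h_+$ one has $J_zu=u$ and $J_zv=v$, so the anti-isometry relation yields $(u,v)_{\mathfrak h}=(J_zu,J_zv)_{\mathfrak h}=-(u,v)_{\mathfrak h}$, forcing $(u,v)_{\mathfrak h}=0$; the same computation with $J_zu=-u$, $J_zv=-v$ (so $(J_zu,J_zv)_{\mathfrak h}=(u,v)_{\mathfrak h}$) disposes of $\mathfrak h_-$. Thus $\mathfrak h$ is realized as the direct sum of two totally isotropic subspaces.

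Finally I would invoke non-degeneracy of $(\cdot\,,\cdot)_{\mathfrak h}$ — which holds because the orthogonal splitting $\mathfrak n=\mathfrak h\oplus_\perp\mathfrak z$ together with non-degeneracy of both $(\cdot\,,\cdot)$ and $(\cdot\,,\cdot)_{\mathfrak z}$ forces the restriction to $\mathfrak h$ to be non-degenerate — in order to fix the signature. Since $\mathfrak h_+$ is isotropic, any $u\in\mathfrak h_+$ already pairs trivially with all of $\mathfrak h_+$; if it were orthogonal to $\mathfrak h_-$ as well it would lie in the radical and hence vanish. This shows the pairing $\mathfrak h_+\times\mathfrak h_-\to\mathbb{R}$ is non-degenerate in the first slot, and by symmetry $\dim\mathfrak h_+=\dim\mathfrak h_-=:a$. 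A non-degenerate space that decomposes as a direct sum of two complementary totally isotropic subspaces of equal dimension is a hyperbolic (split) space, whose signature is neutral, namely $(a,a)$ with $\dim\mathfrak h=2a$; this is precisely the assertion.

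I expect no serious obstacle. The only point requiring care is the dimension count in the last step, where non-degeneracy of $(\cdot\,,\cdot)_{\mathfrak h}$ is essential — without it one could in principle have two isotropic eigenspaces of unequal dimension. One should also record that a vector $z$ with $(z,z)_{\mathfrak z}=-1$ genuinely exists, which is immediate from negative-definiteness after rescaling.
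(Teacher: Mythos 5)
Your argument is, in substance, the correct one, and it is worth noting at the outset that the paper itself gives no proof of this proposition: it is quoted from \cite[Proposition 2.2]{Ciatti}, whose proof is precisely the route you follow. Namely, a vector $z$ with $(z,z)_{\mathfrak{z}}=-1$ makes $J_z$ an involution by \eqref{p3} and an anti-isometry by \eqref{p2}, its $\pm 1$-eigenspaces $\mathfrak{h}_{\pm}$ are totally isotropic (your computation; equivalently one can use skew-symmetry \eqref{p4}), non-degeneracy of $(\cdot\,,\cdot)_{\mathfrak{h}}$ follows from the orthogonal splitting of $\mathfrak{n}$, the pairing $\mathfrak{h}_+\times\mathfrak{h}_-\to\mathbb{R}$ forces $\dim\mathfrak{h}_+=\dim\mathfrak{h}_-$, and a space split into two complementary totally isotropic subspaces of equal dimension is hyperbolic, hence of neutral signature. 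All of these steps are sound.

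The one genuine flaw is your opening sentence: ``non-positive definite'' in this statement means \emph{not} positive definite, not ``negative semi-definite,'' so the deduction ``non-degenerate and non-positive definite, hence negative definite'' is a misreading --- it fails already for a center of signature $(1,1)$. That the intended meaning is ``not positive definite'' is visible from how the proposition is used later in the paper: in the section on non-degenerate metrics (see Theorem~\ref{te6}) the center carries a scalar product of arbitrary signature $(s,t)$ with $t\geq 1$ and possibly $s>0$, and Proposition~\ref{pr4} is invoked there to conclude that the metric on $U$ must be neutral. Fortunately the slip is harmless for your argument, because the only consequence you extract from it is the existence of a single $z\in\mathfrak{z}$ with $(z,z)_{\mathfrak{z}}=-1$, and this already follows from the correct hypothesis: diagonalizing the non-degenerate form $(\cdot\,,\cdot)_{\mathfrak{z}}$, non-positive-definiteness forces at least one basis vector of negative square norm, which can be rescaled to have norm $-1$. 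Replacing your first sentence by this observation makes the proof complete, and in essentially the same form as Ciatti's.
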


\begin{proposition}\label{propOrt}
Let $\mathfrak{n}$ be  a two-step nilpotent metric Lie algebra, and let $z_1,\dots, z_p$ be an orthogonal non-null basis of $\mathfrak{z}$. Let the operator $J$ be defined by \eqref{p1} on the basis vectors of $\mathfrak{z}$. The operator $J_z$, $z\in\mathfrak{z}$, satisfies condition \eqref{p2}, or equivalently,  $\mathfrak{n}$ is a pseudo $H$-type algebra,  if and only if, $J^2_{z_k}=-(z_k,z_k)_{\mathfrak{z}}\id_{\mathfrak{h}}$ and $J_{z_i}J_{z_j}=-J_{z_j}J_{z_i}$. 
\end{proposition}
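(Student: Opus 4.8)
The plan is to reduce the orthogonality condition \eqref{p2} to the single operator identity \eqref{p3}, and then to exploit the bilinearity of $J$ together with the orthogonality of the basis to decouple \eqref{p3} into a ``diagonal'' and an ``off-diagonal'' part. The first observation I would record is that skew-symmetry \eqref{p4} holds automatically in \emph{any} two-step nilpotent metric Lie algebra, purely from the defining relation \eqref{p1}: the antisymmetry of the Lie bracket gives
\[
(J_z u,v)_{\mathfrak{h}}=(z,[u,v])_{\mathfrak{z}}=-(z,[v,u])_{\mathfrak{z}}=-(J_z v,u)_{\mathfrak{h}}=-(u,J_z v)_{\mathfrak{h}}.
\]
Since the hypotheses do not yet include \eqref{p2}, this is exactly the property I may use for free. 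By Remark~\ref{rem1}, with \eqref{p4} in hand, condition \eqref{p2} is equivalent to \eqref{p3}, so it suffices to characterize when $J_z^2=-(z,z)_{\mathfrak{z}}\id_{\mathfrak{h}}$ holds for every $z\in\mathfrak{z}$.

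Next I would expand $J_z^2$ using bilinearity. Writing $z=\sum_k c_k z_k$ we have $J_z=\sum_k c_k J_{z_k}$, whence
\begin{equation*}
J_z^2=\sum_k c_k^2\, J_{z_k}^2+\sum_{i<j}c_ic_j\bigl(J_{z_i}J_{z_j}+J_{z_j}J_{z_i}\bigr),
\end{equation*}
while orthogonality of the basis yields $(z,z)_{\mathfrak{z}}=\sum_k c_k^2\,(z_k,z_k)_{\mathfrak{z}}$. Thus \eqref{p3} is equivalent to the identity
\begin{equation*}
\sum_k c_k^2\bigl(J_{z_k}^2+(z_k,z_k)_{\mathfrak{z}}\id_{\mathfrak{h}}\bigr)+\sum_{i<j}c_ic_j\bigl(J_{z_i}J_{z_j}+J_{z_j}J_{z_i}\bigr)=0
\end{equation*}
holding for all tuples $(c_k)$, i.e.\ to the vanishing of a polynomial in the $c_k$ with operator coefficients.

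For the forward implication I would simply specialize this polynomial identity. Taking $z=z_k$ (one coordinate $1$, the rest $0$) gives at once $J_{z_k}^2=-(z_k,z_k)_{\mathfrak{z}}\id_{\mathfrak{h}}$, the diagonal condition. Substituting these relations back and then taking $z=z_i+z_j$ for a pair $i\neq j$ annihilates all diagonal contributions and leaves $J_{z_i}J_{z_j}+J_{z_j}J_{z_i}=0$, the anticommutation condition. Conversely, if both conditions hold, every diagonal summand and every off-diagonal summand in the displayed identity vanishes term by term, so \eqref{p3} holds for all $z$; combined with the automatic \eqref{p4}, Remark~\ref{rem1} returns \eqref{p2}, i.e.\ $\mathfrak{n}$ is pseudo $H$-type.

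I do not expect a serious obstacle, since the argument is essentially a polarization of the quadratic identity \eqref{p3}. The one point deserving care is the precise role of the hypotheses on the basis: orthogonality is exactly what removes the cross terms $(z_i,z_j)_{\mathfrak{z}}$ from $(z,z)_{\mathfrak{z}}$, allowing the diagonal and off-diagonal parts of \eqref{p3} to separate cleanly, while the non-null property (which is in any case forced for an orthogonal basis of the non-degenerate $(\cdot\,,\cdot)_{\mathfrak{z}}$) merely ensures the setup is consistent and plays no further part in the computation.
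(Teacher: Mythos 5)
Your proof is correct and follows essentially the same route as the paper's: both observe that skew-symmetry \eqref{p4} is automatic from \eqref{p1}, reduce \eqref{p2} to \eqref{p3} via Remark~\ref{rem1}, and use bilinearity of $J$ together with orthogonality of the basis to expand $J_z^2$ for a general linear combination $z$, the cross terms vanishing precisely by anticommutativity. The only difference is presentational: you work at the operator level and write out the ``necessary'' direction explicitly, which the paper dismisses as trivial.
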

\begin{proof}
The necessary condition is trivial. Let us focus ourselves on the sufficient part.
Indeed, the definition \eqref{p1} of the operator $J$ can be extended from the basis of $\mathfrak{z}$ to the whole $\mathfrak{z}$ by linearity, and it implies the skewsymmetry $(J_z u,v)_{\mathfrak{h}}=-(u,J_zv)_{\mathfrak{h}}$ and linearity with respect to $z$ and $u$. Therefore,
\[
(J^2_{\alpha z_1+\beta z_2}u,v)_{\mathfrak{h}}=-(J_{\alpha z_1+\beta z_2}u,J_{\alpha z_1+\beta z_2}v)_{\mathfrak{h}}
=-\alpha^2(J_{z_1}u,J_{z_1}v)_{\mathfrak{h}}-\beta^2(J_{z_2}u,J_{z_2}v)_{\mathfrak{h}}\]
\[=\alpha^2(J^2_{z_1}u,v)_{\mathfrak{h}}+\beta^2(J^2_{z_2}u,v)_{\mathfrak{h}}=-\alpha^2(z_1,z_1)_{\mathfrak{z}}(u,v)_{\mathfrak{h}}-\beta^2(z_2,z_2)_{\mathfrak{z}}(u,v)_{\mathfrak{h}}\]
\[=-(\alpha z_1+\beta z_2,\alpha z_1+\beta z_2)_{\mathfrak{z}}(u,v)_{\mathfrak{h}},
\]
for any $u$ and $v$ from $\mathfrak{h}$ and for any basis vectors $z_1$ and $z_2$ from $\mathfrak{z}$. Hence, the equality $J^2_{z}=-(z,z)_{\mathfrak{z}}\id_{\mathfrak{h}}$ is true for any vector $z\in\mathfrak{z}$ and the orthogonality condition  follows from~Remark~\ref{rem1}.
\end{proof}

Let us formulate how the pseudo $H$-type algebras are related to representations of Clifford algebras. Given a pseudo $H$-type algebra $\mathfrak{n}=(\mathfrak{z}\oplus_{\bot}\mathfrak{h}, [.\,,.])$, the operator $J_z$ is defined by~\eqref{p1} satisfying~\eqref{p3} for every $z\in\mathfrak{z}$, and therefore, it defines a representation $J\colon Cl(\mathfrak{z},\mathbf{q})\to\Endo(\mathfrak{h})$ over the space $\mathfrak{h}$. Here the quadratic form $\bf q$ is defined by the scalar product $(.\,,.)_{\mathfrak{z}}$. Moreover in this case, the scalar product $(.\,,.)_{\mathfrak{h}}$ on the representation space is such that the operator $J_z$ is skew symmetric with respect to this scalar product, see~\eqref{p4}. Following~\cite{Ciatti} we call such a pair $(h, (.\,,.)_{\mathfrak{h}})$ an {\it admissible representation}. Now let us assume that a representation $\rho\colon Cl(Z,\mathbf{q})\to\Endo(V)$ of the Clifford algebra $Cl(Z,\bf q)$ is given and let us suppose also that the representation space $V$ admits a scalar product $(.\,,.)_V$ such that the  restriction $J=\rho\vert_Z$ of $\rho$ to $Z$, is skew symmetric with respect to $(.\,,.)_V$, in other words $(V,(.\,,.)_V)$ is an admissible representation. Then we define the Lie brackets $[.\,,.]$ by formula~\eqref{p1}, where the scalar product $(.\,,.)_Z$ is the polarizaition of the quadratic form $\bf q$. The Lie algebra $\mathcal N=(Z\oplus V,[.\,,.])$ will be a pseudo $H$-type Lie algebra with the center $Z$ and with the orthogonal decomposition $Z\oplus V$ with respect to the scalar product $(.\,,.)=(.\,,.)_Z+(.\,,.)_V$.


\subsection{Lattices and nilmanifolds}\label{SNM}

Let $G$ be a nilpotent Lie group, and let  $\mathfrak{g}$ be its Lie algebra. We denote the
Lie exponent and logarithm by $\exp\colon \mathfrak g\to G$ and $\log\colon G\to \mathfrak g$.

\begin{definition}\label{NM}
A subgroup $L$ of $G$ is called a lattice if $L$ is discrete and the left co-set $L\backslash G$ possesses a finite  measure  which is invariant under the action of $G$, and inherited from the Haar measure on $G$. The lattice is called uniform (or co-compact) if  $L\backslash G$ is compact.
The   space $L\backslash G$ is called  compact nilmanifold.
\end{definition}

\begin{theorem}[Mal'cev criterion \cite{Malc}]\label{MC} The group $G$ admits a lattice $K$ if and only if  the Lie algebra $\mathfrak g$ admits a basis $\mathcal B=\{b_1,\dots, b_n\}$
with rational structural constants $[b_i,b_j]=\sum_{k=1}^{n}c_{ij}^k b_k$, $c_{ij}^k\in\mathbb Q$.
\end{theorem}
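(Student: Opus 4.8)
The plan is to pass back and forth between the group $G$ and its Lie algebra $\mathfrak{g}$ by means of the exponential map and the Baker--Campbell--Hausdorff (BCH) formula. Since $G$ is (connected and) simply connected nilpotent, $\exp\colon\mathfrak{g}\to G$ is a diffeomorphism, and we may transport the product of $G$ to $\mathfrak{g}$ via
\[
X*Y:=\log(\exp X\cdot\exp Y)=X+Y+\tfrac12[X,Y]+\tfrac1{12}\big([X,[X,Y]]-[Y,[X,Y]]\big)+\cdots.
\]
Because $\mathfrak{g}$ is nilpotent, say of step $s$, this series terminates after brackets of length $s$, so $*$ is a \emph{polynomial} map whose universal coefficients are rational. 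The whole argument rests on this single observation: group-theoretic identities in $G$ translate into polynomial identities in $\mathfrak{g}$ with rational coefficients, and conversely.

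Consider first the implication (rational constants $\Rightarrow$ lattice). Fixing a basis $\mathcal B$ with $c_{ij}^k\in\mathbb{Q}$, let $\mathfrak{g}_{\mathbb{Q}}=\mathrm{span}_{\mathbb{Q}}\mathcal B$ be the resulting rational form; it is closed under the bracket. Since every iterated bracket then lands in $\mathfrak{g}_{\mathbb{Q}}$ and the BCH coefficients are rational, $*$ restricts to a map $\mathfrak{g}_{\mathbb{Q}}\times\mathfrak{g}_{\mathbb{Q}}\to\mathfrak{g}_{\mathbb{Q}}$, so $\exp(\mathfrak{g}_{\mathbb{Q}})$ is a subgroup. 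To manufacture a \emph{discrete, co-compact} subgroup I would rescale the basis so that all $c_{ij}^k$ are integers and pass to a strong Mal'cev basis adapted to the lower central series, chosen inside $\mathfrak{g}_{\mathbb{Q}}$. In the resulting coordinates $\phi(t_1,\dots,t_n)=\exp(t_1b_1)\cdots\exp(t_nb_n)$ the product is polynomial with rational coefficients; replacing each $b_i$ by $Nb_i$ for a large integer $N$ divisible by all the occurring denominators clears them, so that $L:=\phi(\mathbb{Z}^n)$ is closed under the product and inversion. It is discrete because $\phi$ is a diffeomorphism onto $G$ carrying the integral lattice $\mathbb{Z}^n$, and co-compact because $[0,1)^n$ gives a relatively compact fundamental domain; thus $L$ is a uniform lattice.

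For the converse (lattice $\Rightarrow$ rational constants) suppose $K\subset G$ is a lattice. A standard fact is that $K$ is a finitely generated, torsion-free nilpotent group and that $\log K$ spans $\mathfrak{g}$ over $\mathbb{R}$ (co-compactness forces the span to be everything). I would invoke the structure theory of such groups to produce a \emph{Mal'cev basis} $\gamma_1,\dots,\gamma_n$ of $K$, i.e.\ generators adapted to the lower central series of $K$ so that every element of $K$ is uniquely $\gamma_1^{m_1}\cdots\gamma_n^{m_n}$ with $m_i\in\mathbb{Z}$, and the commutator relations $\gamma_i\gamma_j\gamma_i^{-1}\gamma_j^{-1}=\prod_k\gamma_k^{a_{ijk}}$ hold with \emph{integer} exponents $a_{ijk}$. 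Setting $X_i=\log\gamma_i$, these integral group relations become, through BCH and its polynomial inverse, polynomial identities with rational coefficients among the $X_i$. Reading off the quadratic (bracket) part of these identities expresses $[X_i,X_j]$ as a rational combination of the $X_k$; hence the basis $\{X_1,\dots,X_n\}$ of $\mathfrak{g}$ has rational structural constants.

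The principal difficulty lies in the bookkeeping of denominators. In the forward direction one must be sure that, after a single rescaling, \emph{all} BCH products of integer points close up inside the lattice; this is where the choice of a strong Mal'cev basis, making the group law triangular with respect to the central filtration, is essential, since it bounds the denominators uniformly by quantities depending only on $s$ and the integral structure constants. In the converse, the crux is the passage from the \emph{group} relations of $K$ to \emph{Lie-algebra} relations: this requires both the existence of a Mal'cev basis for a finitely generated torsion-free nilpotent group and the rationality of the BCH coefficients, and it is the step I expect to be the most delicate to make fully rigorous.
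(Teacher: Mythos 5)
Your proposal reconstructs the standard proof of Mal'cev's criterion; note that the paper itself gives \emph{no} proof of this statement --- it is quoted as a classical result from Mal'cev's 1949/1951 paper, and the text following the theorem merely records the dictionary between the two sides: given a lattice $K$ one sets $\mathfrak g_{\mathbb Q}=\mathrm{span}_{\mathbb Q}\log K$ and takes a $\mathbb Q$-basis, and conversely, given a rational basis $\mathcal B$ one takes a vector lattice $\Lambda\subset\mathrm{span}_{\mathbb Q}\mathcal B$ and lets $K$ be the subgroup generated by $\exp\Lambda$. Your two directions are exactly this dictionary, fleshed out with the Baker--Campbell--Hausdorff mechanism that makes it work: rationality of the BCH coefficients and termination of the series in the nilpotent case, so that group identities and Lie-algebra identities translate into one another over $\mathbb Q$. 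The outline is correct and is the argument found in the standard references (Mal'cev's original paper, Raghunathan, Corwin--Greenberg). The two points you flag yourself are indeed the only real gaps, and both are fillable rather than fatal: (i) in the forward direction, a single rescaling $b_i\mapsto Nb_i$ suffices only because the basis is adapted to the lower central series, which makes the group law in coordinates of the second kind triangular, so all denominators appearing in the polynomial coefficients are bounded in terms of the step and can be cleared at once (note that taking $\exp$ of the $\mathbb Z$-span alone would fail already for the Heisenberg algebra, because of the $\tfrac12[X,Y]$ term --- your use of products $\exp(t_1b_1)\cdots\exp(t_nb_n)$ is what avoids this); (ii) in the converse, ``reading off the quadratic part'' should be organized as a downward induction along the central series, expressing $[X_i,X_j]$ as $\log$ of the group commutator plus rational polynomial corrections in higher brackets, and it rests on the existence of a Mal'cev basis for a finitely generated torsion-free nilpotent group, a nontrivial structure theorem you would need to import.
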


Given a lattice $K$, one can construct the corresponding basis $\mathcal B$ as follows. Set $\mathfrak g_{\mathbb Q}=\spn_{\mathbb Q}\log K$, which is a Lie algebra over the field $\mathbb Q$. Denote by $\mathcal B_{\mathbb Q}$ a $\mathbb Q$-basis in $\mathfrak g_{\mathbb Q}$. Then it is also an $\mathbb R$-basis $\mathcal B$ in $\mathfrak g$.

Reciprocally, given a basis $\mathcal B$ defined as in Theorem~\ref{MC}, let $\Lambda$ be a vector lattice in  $\mathfrak g$, such that $\Lambda\subset \spn_{\mathbb Q}\mathcal B$. Then the lattice $K$ is generated by elements $\exp \Lambda$ and $\spn_{\mathbb Q}(\log K)=\spn_{\mathbb Q}\mathcal B$.

Let $K_1$ and $K_2$ be two lattices in $G$. Then $\spn_{\mathbb Q}(\log K_1)=\spn_{\mathbb Q}(\log K_2)$, if and only if,  $(K_1\cap K_2)\backslash K_1$ and $(K_1\cap K_2)\backslash K_1$ are finite.

One of the aims of our paper is to prove that pseudo $H$-type groups admit  lattices, or equivalently, the  corresponding pseudo $H$-type algebras admit a basis with rational structural constants.


\section{$H$-type algebras from free algebras}

The main idea  is to construct the pseudo $H$-type algebra  by means of  the free algebra step by step restricting the latter by satisfying necessary conditions and enriching it with additional structures. 

\subsection{Construction}
First we assign the vector space $U$ to be the complementary space to the center for the future  pseudo $H$-type algebra and so we choose an even number $m=2k$ of generators $$e_1,\dots,e_k,e_{k+1},\dots, e_{2k}.$$ The pseudo $H$-type algebra is of step two, so we are interested in length two free nilpotent Lie algebra ${N}$.
Let us introduce a scalar product $(\cdot\,,\cdot)_{{N}}$ in ${N}$ of some signature, such that the basis vectors $e_1,\dots,e_{2k},e_1\times e_2,\dots, e_{2k-1}\times e_{2k}$
are orthogonal and non-null with respect to this product.  The scalar product $(\cdot\,,\cdot)_{N}$ splits as $(\cdot\,,\cdot)_{N}=(\cdot\,,\cdot)_{U}+(\cdot\,,\cdot)_{Z}$, where $Z=\spn\{e_1\times e_2,e_1\times e_3,\dots, e_{2k-1}\times e_{2k}\}$. 
This way the Lie algebra $N$ splits into 
\[
N=U\oplus_{\perp}Z=\spn\{e_1,\dots, e_{2k},\underbrace{e_1\times e_2,\dots, e_{2k-1}\times e_{2k}}_{k(2k-1)}\},
\]
Define the signature index of a scalar product $(\cdot\, ,\cdot)_{U}$ by
\[
\varepsilon_j(s,r)=
\begin{cases} 1, &  \text{for $j=1,\dots, s$;} \\
-1, & \text{for $j=s+1,\dots, s+r$.} 
\end{cases}
\]
Let us normalize the basis vectors  such that 
\begin{itemize}
\item $(e_j,e_j)_{U}=\varepsilon_j(s,r)/k$ for $j=1,\dots 2k$, $s+r=2k$, where $s,r$ are not specified so far; 
\item  $|(e_i\times e_j,e_i\times e_j)_{Z}|=1/k$.
\end{itemize}

If $A\subset Z$ is an ideal of $N$,  such that $N=U\oplus_{\perp}\Omega\oplus_{\perp}A$, and $\mathfrak{n}=N/A$, then $\mathfrak{h}:=U(\text{mod $A$})$, $\mathfrak{z}:=\Omega(\text{mod $A$})$, the scalar product $(\cdot,\cdot)_N=(\cdot,\cdot)_U+(\cdot,\cdot)_{\Omega}+(\cdot,\cdot)_A$ gives $(\cdot,\cdot)_{\mathfrak{n}}:=(\cdot,\cdot)_U+(\cdot,\cdot)_{\Omega}$. 
Let us denote by $\tilde{u}=u+A$, and $\tilde{v}=v+A$ for $u,v\in U$.
The commutator on $\mathfrak{n}$ is defined by $[\tilde{u},\tilde{v}]=u\times v+A$.

Let us define an operator $J_{\omega}\colon U\to U$, $\omega\in Z$, by $(J_{\omega}u,v)_{U}=(\omega,u\times v)_{Z}$. So far it is only a definition of $J$ without requiring the orthogonality condition \eqref{p2}.

In order to make $\mathfrak{n}$ a pseudo $H$-type algebra,  the operator $\tilde{J}_{z}$, $z\in \mathfrak{z}$, $z=\omega+A$, $\omega\in\Omega$, must satisfy
\[
(\tilde{J}_z\tilde{u},\tilde{v})_{\mathfrak{h}}=(z,[\tilde{u},\tilde{v}])_\mathfrak{z}=(\omega+A,u\times v+A))_{Z}=(\omega,u\times v)_{\Omega}=(J_{\omega}{u},{v})_{U}
\]
 and the orthogonality condition \eqref{p2}.

The next step is to construct the ideal  ${A}$ of ${N}$ as an orthogonal complement in $Z$ to $\Omega$. To define $\Omega$ we consider a family of partitions $\mathfrak{p}=\{\mathfrak{p}_l\}$, $l=1,\dots,\frac{(2k)!}{2^k k!}$,  of the set of integers $1,2,\dots, 2k$  in ordered  pairs $(i,j)$ , $i< j$.
For each $\mathfrak{p}_l\in \mathfrak{p}$ we construct 
ordered pairs $(e_i,e_j)$,  $(i,j)\in \mathfrak{p}_l$, the product $e_i\times e_j$,
and the  vector
\[
\omega_l=\sum\limits_{(i,j)\in \mathfrak{p}_l}\alpha^l_{ij}(e_i\times e_j)\in Z, \quad \alpha^l_{ij}\in \mathbb R.
\]
The  operator 
$J_{\omega_l}\colon {U}\to {U}$ is defined by the equality 
\begin{equation}\label{eq:J}
(J_{\omega_l}(u), v)_{U}=(\omega_l,u\times v)_{Z},\quad u,v\in U.
\end{equation}
Let us assume that the coefficients $\alpha^l_{ij}$ are chosen so that $Z=\spn\{\omega_l\colon l=1,\dots,\frac{(2k)!}{2^k k!}\}$.
From the definition of the operator $J_{\omega_l}$ and the product $(\times)$ it immediately follows that  $J_{\omega_l}$ extends to the operator $J_{\omega}\colon U\to U$,  $\omega\in Z$ by linearity, and
\[
(J_{\omega}(u), v)_U=-(u, J_{\omega}(v))_U.
\]

We restrict the number of partitions from $\mathfrak{p}$ assigning special values to the coefficients $\alpha^l_{ij}$ by requiring that the operators
$J_{\omega_l}$ act in a special way on the basis $\{e_1,\dots,e_{2k}\}$ of~$U$.
 
\begin{definition}
By {\it signed permutation} of vectors $e_1,\dots,e_{2k}$ from $U$ by the operator $J_{\omega}$ we understand a permutation  where some of resulting vectors may change orientation.
That is, for any $i=1,\dots, 2k$, there exists $j=1,\dots, 2k$, such that $J_{\omega}e_i=\pm e_j$.
\end{definition}

\begin{proposition}\label{prop3}
For a fixed $l$, the operator $J_{\omega_l}\colon U\to  U$ is a signed permutation of the basis $e_1,\dots,e_{2k}$, if and only if,  for every fixed $a\in\{1,\dots, 2k\}$ there exists a unique $b\in\{1,\dots, 2k\}$, $a\neq b$, such that 
$\alpha^l_{ab}=\pm 1$ if $a<b$ or $\alpha^l_{ba}=\pm 1$ if $a>b$. 
\end{proposition}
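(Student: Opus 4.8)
The plan is to compute the action of $J_{\omega_l}$ on each generator $e_a$ directly from the defining relation \eqref{eq:J} and to read off exactly when the image has the form $\pm e_b$.

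First I would fix $a\in\{1,\dots,2k\}$ and expand $J_{\omega_l}e_a=\sum_c\lambda_c e_c$ in the orthogonal basis of $U$. Pairing with $e_d$ and using the normalization $(e_d,e_d)_U=\varepsilon_d(s,r)/k$ gives $\lambda_d=\frac{k}{\varepsilon_d(s,r)}(J_{\omega_l}e_a,e_d)_U=\frac{k}{\varepsilon_d(s,r)}(\omega_l,e_a\times e_d)_Z$, so the whole problem reduces to evaluating the central pairing $(\omega_l,e_a\times e_d)_Z$. Inserting $\omega_l=\sum_{(i,j)\in\mathfrak{p}_l}\alpha^l_{ij}(e_i\times e_j)$ and exploiting three elementary facts—that the center basis $\{e_i\times e_j:i<j\}$ is orthogonal with $|(e_i\times e_j,e_i\times e_j)_Z|=1/k$, that $e_a\times e_d=-e_d\times e_a$, and that $e_a\times e_a=0$—forces $(\omega_l,e_a\times e_d)_Z$ to vanish unless $\{a,d\}$ is exactly one of the matched pairs of $\mathfrak{p}_l$, in which case it equals $\pm\alpha^l_{ij}/k$, where $(i,j)$ is the pair $\{a,d\}$ written with $i<j$ and the sign is governed by the order of $a,d$ and by the sign of $(e_i\times e_j,e_i\times e_j)_Z$.

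The decisive structural input is that $\mathfrak{p}_l$ is a partition of $\{1,\dots,2k\}$ into $k$ pairs, so $a$ lies in a unique pair with a unique partner $b$. Consequently $\lambda_d=0$ for every $d\neq b$, and $J_{\omega_l}e_a=\lambda_b e_b$ with $|\lambda_b|=|\alpha^l_{ab}|$ when $a<b$ (resp. $|\alpha^l_{ba}|$ when $a>b$), because the two sign factors $\eta:=\mathrm{sign}(e_a\times e_b,e_a\times e_b)_Z$ and $\varepsilon_b(s,r)$ are each of modulus one. Hence $J_{\omega_l}e_a=\pm e_b$ if and only if $|\lambda_b|=1$, i.e. if and only if $\alpha^l_{ab}=\pm 1$ (resp. $\alpha^l_{ba}=\pm 1$). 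Running this over all $a\in\{1,\dots,2k\}$ yields the stated equivalence, and the uniqueness of $b$ appearing in the statement is precisely the uniqueness of the partner in the matching $\mathfrak{p}_l$.

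The only delicate point, and the step I would carry out most carefully, is the consistent bookkeeping of the two sign sources: the order convention $i<j$ built into the basis of $Z$ (which contributes a minus sign whenever $a>b$), and the signs $\varepsilon_b(s,r)$ and $\eta$ coming from the indefinite normalization. Since all of these signs have modulus one, they affect only the orientation of the image $\pm e_b$ and never the fact that it is a basis vector; they therefore drop out of the condition $|\lambda_b|=1$. This is exactly why the proposition can be stated uniformly, without any reference to the signature $(s,r)$.
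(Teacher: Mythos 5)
Your proof is correct and follows essentially the same route as the paper's: both reduce to evaluating $(J_{\omega_l}e_a,e_d)_U=(\omega_l,e_a\times e_d)_Z$ against the orthogonal bases and invoking the fact that $a$ lies in exactly one pair of the partition $\mathfrak{p}_l$, so only the partner index $b$ can contribute. The only difference is presentational: you carry out the coordinate expansion once and read off both implications simultaneously, whereas the paper dismisses sufficiency as trivial and performs the same pairing computation for the necessity direction.
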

\begin{proof} The sufficient part is trivial. For the necessary part,
 fix some $a\in\{1,\dots, 2k\}$ and~$l$. Then in the partition $\mathfrak{p}_l$ there is unique index $b\in\{1,\dots, 2k\}$ such that $(a,b)\in\mathfrak{p}_l$. For simplicity assume that $a<b$. The definition~\eqref{eq:J} of the operator $J_{\omega_l}$ immediately implies
$$
(J_{\omega_l}(e_a), e_b)_{U}=(\omega_l,e_a\times e_b)_{Z}=\pm\frac{\alpha^l_{ab}}{k}\neq 0,
$$ 
and 
$$
(J_{\omega_l}(e_a), e_j)_{U}=(\omega_l,e_a\times e_j)_{Z}=0\quad\text{for all}\quad j\in\{1,\dots, 2k\},\ \ j\neq b.
$$
since $\omega_l$ does not contain the term $e_a\times e_j$. The map $J_{\omega_l}$ acts by sign permutation of the basis, therefore $J_{\omega_l}(e_a)=\pm e_b$ and this implies also that $\alpha^l_{ab}=\pm 1$. Here the sign $\pm$ means, + or -. 
\end{proof}

\begin{remark}
Observe that the map $J_{\omega_l}\colon U\to  U$ is injective in the case of signed permutation.
\end{remark}

The following simple statement can be found in standard texts in combinatorics and graph theory, e.g.~\cite{Anderson}.

\begin{proposition}[\cite{Anderson}] The set $\mathfrak{p}$ of all $\frac{(2k)!}{2^k k!}$ possible partitions contains a subset $$\mathfrak{p}'=\{\mathfrak{p}_{l_1},\dots, \mathfrak{p}_{l_{2k-1}}\},$$  such that each pair $(i,j)$, $1\leq i<j\leq 2k$, appears only once in these partitions. 
\end{proposition}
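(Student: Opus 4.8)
The plan is to recognize that this statement is a classical combinatorial fact about the complete graph $K_{2k}$ and its proper edge colorings. The key observation is that a partition $\mathfrak{p}_l$ of the set $\{1,2,\dots,2k\}$ into unordered pairs is precisely a \emph{perfect matching} (1-factor) of the complete graph $K_{2k}$ on vertices labeled $1,\dots,2k$, since each such partition groups all $2k$ vertices into $k$ disjoint edges. Requiring that each pair $(i,j)$ with $1\leq i<j\leq 2k$ appear exactly once across the chosen partitions $\mathfrak{p}_{l_1},\dots,\mathfrak{p}_{l_{2k-1}}$ is exactly the statement that these matchings partition the edge set of $K_{2k}$, i.e., that they constitute a \emph{1-factorization} of $K_{2k}$. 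Thus the proposition reduces to the well-known fact that $K_{2k}$ admits a 1-factorization, together with a counting check that such a factorization uses exactly $2k-1$ factors.

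First I would make the translation to graph-theoretic language explicit: the edge set of $K_{2k}$ has $\binom{2k}{2}=k(2k-1)$ edges, and each perfect matching uses exactly $k$ edges, so a decomposition into edge-disjoint perfect matchings must consist of exactly $k(2k-1)/k = 2k-1$ matchings. This pins down the cardinality of the subfamily $\mathfrak{p}'$ and shows it is consistent with the claim. Next I would invoke (or exhibit) the standard 1-factorization of $K_{2k}$. The cleanest explicit construction is the \emph{round-robin} (circle) scheduling method: fix one vertex, say $2k$, at the center and arrange the remaining $2k-1$ vertices $1,\dots,2k-1$ on a circle; for each $r=0,1,\dots,2k-2$ define the matching that pairs vertex $2k$ with vertex $r+1$ (indices read modulo $2k-1$ on the circle vertices) and pairs the remaining circle vertices symmetrically about that diameter. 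Rotating by one step each time yields $2k-1$ matchings, and a direct check shows every edge of $K_{2k}$ occurs in exactly one of them.

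The main content is therefore the verification that this rotation scheme is genuinely a 1-factorization, i.e., that no edge is repeated and none is omitted. I would argue this by the pigeonhole-style counting already set up: since there are $2k-1$ matchings each contributing $k$ distinct edges, the total number of edge-slots is $k(2k-1)$, which equals $\binom{2k}{2}$; hence if one shows that no edge appears twice, it follows automatically that every edge appears exactly once. The non-repetition is the heart of the matter and is checked by observing that the two edge types behave distinctly: the ``spoke'' edges incident to the fixed vertex $2k$ are $\{2k, r+1\}$ for the distinct residues $r+1$, all different; and for the ``chord'' edges among the circle vertices, the defining symmetry under the rotation group $\mathbb{Z}_{2k-1}$ acts simply transitively on chords of a given length class, so each chord arises for a unique rotation parameter.

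The hard part will be the bookkeeping of the indices in the rotation argument, and in particular confirming that the symmetric pairing of the circle vertices about the chosen diameter is well defined for every $r$ and never accidentally reuses a chord; this is where an off-by-one or a parity slip can creep in, and where one must be careful that the $2k-1$ circle vertices admit a consistent diametral reflection for each rotation. Since the excerpt explicitly cites \cite{Anderson} as the source, I would regard the existence of the 1-factorization of $K_{2k}$ as a standard result that may simply be quoted, and present the graph-theoretic dictionary (partitions $\leftrightarrow$ perfect matchings, ``each pair once'' $\leftrightarrow$ edge-disjoint covering) together with the cardinality count $2k-1 = \binom{2k}{2}/k$ as the proof, relegating the explicit round-robin construction to at most a remark.
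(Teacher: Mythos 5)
Your proposal is correct and takes essentially the same route as the paper: the paper also treats the statement as the classical fact that $K_{2k}$ admits a 1-factorization into exactly $2k-1$ one-factors, making the same dictionary between partitions into pairs and perfect matchings, and then citing \cite{Anderson} (and \cite{Gross}) rather than proving it from scratch. Your counting argument $\binom{2k}{2}/k = 2k-1$ and the round-robin construction sketch are sound and merely supply the details that the paper delegates to the references.
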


This proposition can be reformulated in terms of graph theory.
 Let us consider the complete graph $K_{2k}$ of $2k$ vertices at the numbers $1,\dots, 2k$, and recall that  a  1-factorization of a graph is a decomposition of all the edges of the graph into 1-factors, the sets of $k$ independent edges (without common vertices). The graph $K_{2k}$ has exactly $(2k-1)$ 1-factors, see e.g., \cite{Gross}, which in our case coincide with a possible set of partitions $\mathfrak{p}'=\{\mathfrak{p}_{l_1},\dots, \mathfrak{p}_{l_{2k-1}}\}$.

We observe that splitting of $K_{2k}$ into  $(2k-1)$ 1-factors is not unique and may be even not isomorphic, e.g., Kirkman~\cite{Kirkman} and Steiner~\cite{Steiner} tournaments
are not isomorphic~\cite{Anderson}, see example:
\[
\begin{array}{cccc} 12 & 38 & 47 & 56 \\ 
13 & 24 & 58 & 67 \\ 
14 & 26 & 35 & 78 \\
15 & 28 & 37  & 46 \\
16 & 23 & 57  & 48 \\
17 & 25  & 34 & 68 \\
18 & 27 & 36 & 45    
\end{array}
\qquad \mbox{and} \qquad
\begin{array}{cccc} 12 & 37 & 45 & 68 \\ 
13 & 27 & 48 & 56 \\ 
14 & 25 & 38 & 67 \\
15 & 24 & 36  & 78 \\
16 & 28 & 35  & 47 \\
17 & 23  & 46 & 58 \\
18 & 26 & 34 & 57    
\end{array}.
\]
Indeed, in the first array any two lines set together in one graph give a cycle of length 8, while in the second array this will give two disjoint cycles of length 4. 

Let $\omega_{l_m}$ be constructed by  $\mathfrak{p}_{l_m}\in\mathfrak{p}'$, and from now on, let the corresponding operator $J_{\omega_{l_m}}$ acts as a signed basis permutation on $U$. 
 Fixing some $e_j$ we obtain an operator $J(e_j)\colon Z\to U$. From now on we omit the upper index writing simply $\alpha_{ij}$ instead of $\alpha_{ij}^{l}$ because the pair $(i,j)$
 is met only once in every $\mathfrak{p}'$.

\begin{proposition}
Let   $\mathfrak{p}'=\{\mathfrak{p}_{l_1},\dots, \mathfrak{p}_{l_{2k-1}}\}$ be a set of partitions, where each pair $(i,j)$, $1\leq i<j\leq 2k$, appears only once.  
Let the operator $J_{\omega_{l_m}}$ acts over $U$ as a signed permutation.
The map $$J(e_j)\colon \spn\{\omega_{l_m}=\sum_{(i,j)\in \mathfrak{p}_{l_m}}\alpha_{ij}(e_i\times e_j), \,\,m=1,\dots, (2k-1)\}\to  \{e_j\}^{\perp}$$ is bijective.
\end{proposition}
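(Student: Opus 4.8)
The plan is to exploit the combinatorial structure of the $1$-factorization together with the signed-permutation hypothesis, reducing the claim to a dimension count. Note first that $J(e_j)$ is simply the linear map $\omega\mapsto J_\omega(e_j)$, and that both source and target are $(2k-1)$-dimensional: the target $\{e_j\}^{\perp}$ has dimension $2k-1$ because the basis is orthogonal and $\dim U=2k$, while the source is spanned by the $2k-1$ vectors $\omega_{l_1},\dots,\omega_{l_{2k-1}}$. Hence it suffices to understand the images $J_{\omega_{l_m}}(e_j)$ of the spanning vectors and to show that they constitute a signed basis of $\{e_j\}^{\perp}$; surjectivity, injectivity, and the equality of dimensions then all follow simultaneously.

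For the first step I would compute $J_{\omega_{l_m}}(e_j)$ directly from \eqref{eq:J}. Fixing the index $j$, in the partition $\mathfrak{p}_{l_m}$ there is a unique partner $b_m$ with $\{j,b_m\}\in\mathfrak{p}_{l_m}$, and since the pairs consist of distinct indices we have $b_m\neq j$. As $\omega_{l_m}$ contains exactly one term involving $e_j$, namely $\pm\alpha_{jb_m}(e_j\times e_{b_m})$ (up to reordering), the defining relation \eqref{eq:J} gives $(J_{\omega_{l_m}}(e_j),e_l)_U=0$ for every $l\neq b_m$ and a nonzero value for $l=b_m$. By Proposition~\ref{prop3} the relevant coefficient satisfies $\alpha_{jb_m}=\pm1$, so in fact $J_{\omega_{l_m}}(e_j)=\pm e_{b_m}$. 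In particular the image lies in $\{e_j\}^{\perp}$, since $b_m\neq j$ and the basis is orthogonal.

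The heart of the argument is the exhaustion property of the $1$-factorization: each $\mathfrak{p}_{l_m}$ is a perfect matching, so it pairs $j$ with exactly one $b_m$, and because every edge $\{i,j\}$ of $K_{2k}$ occurs in precisely one factor, the partners $b_1,\dots,b_{2k-1}$ run over all of $\{1,\dots,2k\}\setminus\{j\}$ without repetition. Consequently the images $J_{\omega_{l_m}}(e_j)=\pm e_{b_m}$ are, up to sign, exactly the $2k-1$ basis vectors spanning $\{e_j\}^{\perp}$; they are therefore linearly independent and span the target, which gives surjectivity, and their linear independence forces the $\omega_{l_m}$ themselves to be linearly independent, so the source has dimension precisely $2k-1$ and the restricted map is bijective. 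The only point requiring genuine care is this exhaustion claim, which is what guarantees that the images are honestly distinct basis vectors rather than overlapping; everything else is bookkeeping with \eqref{eq:J}. Since the claim is just the defining property of a $1$-factorization, the work lies in articulating it cleanly rather than in overcoming a real obstacle.
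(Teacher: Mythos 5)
Your proof is correct and follows essentially the same route as the paper's: both use Proposition~\ref{prop3} to get $J_{\omega_{l_m}}(e_j)=\pm e_{b_m}$, invoke the once-only (1-factorization) property of $\mathfrak{p}'$ to conclude the partners $b_m$ are distinct and exhaust $\{1,\dots,2k\}\setminus\{j\}$, and deduce bijectivity by a dimension count. The only cosmetic difference is that the paper verifies injectivity on a general linear combination $\omega=\sum_m c_m\omega_{l_m}$, whereas you phrase the same fact as the images forming a signed basis of $\{e_j\}^{\perp}$.
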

\begin{proof} Without loss of generality, fix $j=1$. Then $J_{\omega_{l_m}}(e_1)= \pm e_{j_m}$ by Proposition~\ref{prop3}, where $m=1,\dots, (2k-1)$, the index $j_m\in\{2,3,\dots 2k\}$, and $j_{m}\neq j_n$ if $m\neq n$. If $\omega=\sum_{m=1}^{2k-1}c_m\omega_{l_m}$, then $J_{\omega}(e_1)=\sum_{m=1}^{2k-1}\pm c_me_{j_m}$, which vanishes if and only if, all coefficients $c_m=0$. Therefore, $J(e_1)$ acts as an isomorphism of linear spaces $\spn_{1\leq m\leq 2k-1}\{\omega_{l_m}\}$ and $\{e_1\}^{\perp}$. 
\end{proof}

The scalar product on $U$ is defined and now we fix it on $Z$  requiring $(\omega_{l_m}\,,\omega_{l_m})_{Z}=\pm 1$.
This can be always achieved by the following procedure. Fix $m$ and write
\[
(\omega_{l_m},\omega_{l_m})_Z=\sum\limits_{(i,j)\in\mathfrak{p}_{l_m}}(e_i\times e_j,e_i\times e_j)_Z.
\]
We choose a signature $(p,q)$ of $(\cdot,\,\cdot)_Z$, $p+q=2k-1$.  Then we define  $(e_i\times e_j, e_i\times e_j)_Z=1/k$ for $(i,j)\in\mathfrak{p}_{l_m}$, where $m=1,\dots, p$, and $(e_i\times e_j, e_i\times e_j)_Z=-1/k$ for  $m=p+1,\dots, p+q=2k-1$. We arrive at two options: the metric on $Z$ is positive definite, $q=0$, and
$(\omega_{l_m}\,,\omega_{l_m})_{Z}=1$, and the metric on $Z$ is indefinite non-degenerate, $q\neq 0$, so that $(\omega_{l_m}\,,\omega_{l_m})_{Z}=-1$ for some of ${l_m}$.



\subsection{Inner product on $Z$}

Since we have chosen an inner (positive definite) product  on~$Z$, the scalar product on $U$ may by either an inner product or a scalar negative definite product, because of the
properties of the operator $J$. Both pictures are isomorphic, hence we choose an inner product on the whole $N$.
In this section we study the case of both metrics $(\cdot\,,\cdot)_{U}$ and $(\cdot\,,\cdot)_{Z}$ to be positive definite. In this case
\begin{equation}\label{otno}
J_{\omega_l}^2=-\id_{U} \quad\text{and} \quad J_{\omega_{l_1}}J_{\omega_{l_2}}=-J_{\omega_{l_2}}J_{\omega_{l_1}}.
\end{equation}

For every $\mathfrak{p}_{l}\in\mathfrak{p}'$, for the corresponding vector $\omega_l$, and finally,  for the operator $J_{\omega_l}$ acting by signed permutation, we associate a $2k\times 2k$ matrix of coefficients
$E_l=\{\alpha_{ij}\}$ where $\alpha_{ij}=-\alpha_{ji}$ and $\alpha_{ij}=0$ if $(i,j)\not\in \mathfrak{p}_l$. Matrices $E_l$, $l=1,\dots, 2k-1$ are orthogonal with entries $0,\pm 1$.
 So we  constructed an injective homomorphism of the operators $J_{\omega_l}$ to the set of orthogonal matrices.

We continue with the Hurwitz-Radon-Eckmann theorem, see \cite{Eckmann, Kaplan}. The Hurwitz-Radon function is defined as  $\rho(n):=8\alpha+2^\beta$, where $n$ is uniquely represented by $n=u2^{4\alpha+\beta}$  with $u$  odd, $\beta=0,1,2$ or $3$.

\begin{theorem}[\cite{Eckmann, Hurwitz, Radon}]\label{Eckmann}
A family of $2k-1$ real orthogonal $2k\times 2k$ matrices $E_1,\dots,E_{2k-1}$ admits at most $\rho(2k)-1$ matrices satisfying $E_j^2=-I$ and $E_iE_j=-E_jE_i$, where $\rho(\cdot)$ is the Hurwitz-Radon function.
\end{theorem}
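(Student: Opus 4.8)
The plan is to recognize the defining relations of the family $E_1,\dots,E_s$ as the relations of a representation of a real Clifford algebra, and then to extract the maximal admissible number of matrices from the dimensions of irreducible Clifford modules together with the Cartan--Bott periodicity recalled in Section~3. This is in essence Eckmann's route via representation theory, replacing the original direct computations of Hurwitz and Radon.

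First I would set $n=2k$ and observe that, since $E_j^2=-I$ and $E_iE_j=-E_jE_i$ for $i\neq j$ are precisely the Clifford relations for $s$ orthonormal generators of square $-1$ in the convention of Section~3, the assignment $\ell_j\mapsto E_j$ extends to an algebra homomorphism $Cl_{s,0}\to\Endo(\mathbb{R}^n)$. Hence $\mathbb{R}^n$ becomes a module over $Cl_{s,0}$, and it is nonzero because each orthogonal $E_j$ is invertible. Since $Cl_{s,0}$ is semisimple, every nonzero module is a direct sum of irreducibles, all of the same real dimension $d_s$ (there are one or two isomorphism classes of irreducibles, but in the two-class case both have dimension $d_s$). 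Consequently the existence of such a family on $\mathbb{R}^n$ forces $d_s\mid n$.

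Next I would compute $d_s$ explicitly. From the classification of the low-dimensional real Clifford algebras $Cl_{s,0}$ for $0\le s\le 7$ (cf. Section~3, e.g. $Cl_{1,0}\simeq\mathbb{C}$, $Cl_{2,0}\simeq\mathbb{H}$, $Cl_{3,0}\simeq\mathbb{H}\oplus\mathbb{H}$), together with the periodicity $Cl_{s+8,0}\simeq Cl_{s,0}\otimes\mathbb{R}(16)$, one reads off that each $d_s$ is a power of two, $d_s=2^{a(s)}$, with initial values $a(0),\dots,a(7)=0,1,2,2,3,3,3,3$ and $a(s+8)=a(s)+4$. The sequence $a(s)$ is non-decreasing, so the divisibility $d_s\mid n$ is equivalent to $a(s)\le v_2(n)$, where $v_2(n)$ denotes the $2$-adic valuation of $n$.

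It then remains to match this with the Hurwitz--Radon function. Writing $n=u\,2^{4\alpha+\beta}$ with $u$ odd and $\beta\in\{0,1,2,3\}$, so that $v_2(n)=4\alpha+\beta$, and putting $s=8\alpha+j$ with $a(8\alpha+j)=4\alpha+a(j)$, the condition $a(s)\le v_2(n)$ reduces to $a(j)\le\beta$; enlarging the multiple of $8$ beyond $8\alpha$ makes it impossible. A glance at the eight values $a(0),\dots,a(7)$ shows that the largest admissible $j$ is $2^{\beta}-1$, whence the largest admissible $s$ is $8\alpha+2^{\beta}-1=\rho(n)-1$. Since $a(s)$ is non-decreasing, no larger family can exist, which is exactly the upper bound asserted. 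The only substantive ingredient here is the periodic list of the dimensions $d_s$; the main obstacle, were one to prove everything from scratch, would be establishing this periodicity, but it is furnished by the Cartan--Bott theorem already invoked, and everything else is routine bookkeeping with $2$-adic valuations.
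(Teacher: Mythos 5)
Your argument is correct, but there is no proof in the paper to compare it against: Theorem~\ref{Eckmann} is stated there as a classical result and its proof is delegated entirely to the cited sources \cite{Eckmann, Hurwitz, Radon} (the paper only adds, via \cite{GP, Geramita79}, that the bound is attained by integer matrices). So you have supplied a proof where the paper supplies a citation. Your route is the standard modern one --- Clifford-module bookkeeping in the style of Atiyah--Bott--Shapiro --- rather than Eckmann's original argument via characters of the finite group generated by the $E_j$, or the direct quadratic-form computations of Hurwitz and Radon. The steps check out: the relations $E_j^2=-I$, $E_iE_j+E_jE_i=0$ are exactly what the universal property of $Cl_{s,0}$ (in the paper's convention $v^2=-Q(v)\mathds{1}$) requires to make $\mathbb{R}^{2k}$ a nonzero $Cl_{s,0}$-module; semisimplicity (or, even more cheaply, the fact that orthogonality of the $E_j$ makes the orthogonal complement of any submodule again a submodule) forces $d_s\mid 2k$, where $d_s=2^{a(s)}$ is the common dimension of the irreducible modules; and your table $a(0),\dots,a(7)=0,1,2,2,3,3,3,3$ with $a(s+8)=a(s)+4$ is the correct consequence of the classification of $Cl_{s,0}$ for $s\le 7$ together with the periodicity $Cl_{s+8,0}\simeq Cl_{s,0}\otimes\mathbb{R}(16)$ recalled in Section~3. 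I verified the final $2$-adic matching: with $2k=u2^{4\alpha+\beta}$, the largest $j\le 7$ with $a(j)\le\beta$ is $2^{\beta}-1$ for each $\beta\in\{0,1,2,3\}$, any extra multiple of $8$ in $s$ adds $4>\beta$ to $a(s)$, and monotonicity of $a$ makes the admissible set an initial segment, so the maximum is $8\alpha+2^{\beta}-1=\rho(2k)-1$.

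Two minor remarks. First, your proof never uses orthogonality of the $E_j$ (beyond, optionally, the complete-reducibility shortcut above), so it proves the slightly stronger statement that the algebraic relations alone bound the size of the family; that is harmless. Second, you prove only the upper bound, but that is precisely what the theorem asserts; achievability is treated separately in the paper through \cite[Theorem 1.6]{Geramita79}, so nothing is missing from your proposal.
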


The function $\rho(n)$ seems to be quite irregular from the first glance but it is not so. It is periodic. We did not find a good reference so let us prove this simple but useful fact.

\begin{proposition}\label{periodic} If $n= u 2^r$, $r\in \mathbb N$, then $\rho(n+2^R)=\rho(n)$ for any $R\neq r$.
\end{proposition}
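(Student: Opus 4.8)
The plan is to analyze the Hurwitz-Radon function $\rho(n) = 8\alpha + 2^\beta$ purely in terms of how the claimed shift $n \mapsto n + 2^R$ affects the canonical decomposition $n = u\,2^r$ with $u$ odd. The key observation is that $\rho(n)$ depends \emph{only} on the $2$-adic valuation $r$ of $n$ (the exponent of the largest power of $2$ dividing $n$), not on the odd part $u$. Indeed, writing $r = 4\alpha + \beta$ with $\beta \in \{0,1,2,3\}$ determined by $r \bmod 4$ and $\alpha = \lfloor r/4 \rfloor$, we see $\rho(n)$ is a function of $r$ alone. So the entire statement reduces to showing that when $n = u\,2^r$ and $R \neq r$, the number $n + 2^R$ still has $2$-adic valuation exactly $r$.

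First I would reduce to computing the valuation of $n + 2^R = u\,2^r + 2^R$. I split into the two cases dictated by whether $R > r$ or $R < r$, which is exactly where the hypothesis $R \neq r$ enters. If $R > r$, factor out $2^r$ to get $n + 2^R = 2^r\,(u + 2^{R-r})$; since $u$ is odd and $R - r \geq 1$, the factor $u + 2^{R-r}$ is odd, so the valuation is exactly $r$ and the odd part is merely changed from $u$ to $u + 2^{R-r}$. Symmetrically, if $R < r$, factor out $2^R$ to get $n + 2^R = 2^R\,(u\,2^{r-R} + 1)$; here $u\,2^{r-R}$ is even (as $r - R \geq 1$), so $u\,2^{r-R} + 1$ is odd, giving valuation exactly $R$ in this subcase. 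Wait — this shows the valuation can change to $R$, so I must be careful about what the proposition is actually asserting.

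Re-examining the statement, the claim $\rho(n + 2^R) = \rho(n)$ for \emph{any} $R \neq r$ cannot hold unconditionally, since the $R < r$ case changes the valuation from $r$ to $R$ and hence changes $\rho$; thus I expect the intended reading is $R > r$, so that $2^R$ is absorbed into a still-odd multiplier and the valuation is preserved. Under that reading the proof is immediate: with $R > r$ we have $n + 2^R = 2^r(u + 2^{R-r})$ with $u + 2^{R-r}$ odd, so $n + 2^R$ has the same $2$-adic valuation $r$, hence the same $\alpha$ and $\beta$, hence $\rho(n + 2^R) = 8\alpha + 2^\beta = \rho(n)$. This establishes the periodicity: for fixed valuation $r$, the value of $\rho$ is constant along the arithmetic progression obtained by adding any higher power of two.

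The only genuine obstacle here is interpretive rather than technical: pinning down the precise hypothesis on $R$ under which the statement is true, since the naive reading with arbitrary $R \neq r$ fails in the $R < r$ regime. Once the correct constraint (effectively $R > r$, or equivalently that $2^R$ lands strictly inside the odd part) is fixed, the argument is a one-line $2$-adic valuation computation, and the verification that $\rho$ factors through the valuation map $n \mapsto r$ is purely a matter of unwinding the definition $r = 4\alpha + \beta$. No deeper structural result about the Hurwitz-Radon family is needed for this periodicity lemma.
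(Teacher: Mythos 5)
Your proof is correct, and your ``interpretive'' worry is not a defect of your argument but a genuine defect of the proposition as stated. For $R<r$ the claim really is false: taking $n=4=1\cdot 2^2$ (so $r=2$) and $R=1$ gives $\rho(4+2)=\rho(6)=2\neq 4=\rho(4)$, exactly as your valuation computation predicts, since $v_2(n+2^R)=R<r$ and the function $r\mapsto 8\lfloor r/4\rfloor+2^{r\bmod 4}$ is strictly increasing in $r$. (That strict monotonicity is the one small point you use implicitly when you say a change of valuation ``changes $\rho$''; it deserves one explicit line, but it is immediate: inside each block of four the term $2^\beta$ increases, and $8\alpha+2^3<8(\alpha+1)+2^0$.) Under the corrected hypothesis $R>r$, your one-line argument --- $n+2^R=2^r\bigl(u+2^{R-r}\bigr)$ with odd second factor, so the $2$-adic valuation and hence $\rho$ are unchanged --- is complete.

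Your route also differs from the paper's, and it is the more transparent one. The paper writes $n=u_12^{4\alpha_1+\beta_1}$ and $n+2^R=u_22^{4\alpha_2+\beta_2}$, subtracts to get $2^{r}\bigl(u_22^{4(\alpha_2-\alpha_1)+(\beta_2-\beta_1)}-u_1\bigr)=2^R$, and argues that since $R\neq r$ the parenthesized factor is even, which forces $4(\alpha_2-\alpha_1)+(\beta_2-\beta_1)=0$ and hence $\rho(n+2^R)=\rho(n)$. But this is carried out ``without loss of generality'' under the assumption $4\alpha_1+\beta_1\leq 4\alpha_2+\beta_2$, and that assumption is not a WLOG: $n$ and $n+2^R$ play asymmetric roles, and the excluded case $v_2(n+2^R)<v_2(n)$ is precisely the regime $R<r$ in which the statement fails. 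So your analysis pinpoints the error concealed by the paper's proof, rather than missing a case the paper handles. Note finally that the Corollary the paper deduces (the telescopic property of $\{\rho(n)\}$) only ever invokes the proposition for $n<2^r$ with increment $2^r$, i.e.\ with $R$ strictly larger than $v_2(n)$, so it remains valid under your corrected formulation.
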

\begin{proof}
If $r=0$, then $n$ is odd and $\rho(n)=1$. So the conclusion is trivial.
Let  $r =4\alpha_1+\beta_1>0$, $n=u_1 2^{4\alpha_1+\beta_1}$,  and let $n+2^R$ is represented as $n+2^R=u2^{4\alpha_2+\beta_2}$, where $\alpha_k, \beta_k$, $k=1,2$ are as above, and
both $u_1$ and $u_2$ are odd. Without loss of generality assume that $4\alpha_1+\beta_1\leq 4\alpha_2+\beta_2$. Then,
\[
2^{4\alpha_1+\beta_1}\left(u_22^{4(\alpha_2-\alpha_1)+(\beta_2-\beta_1)}-u_1\right)=2^{r}\left(u_22^{4(\alpha_2-\alpha_1)+(\beta_2-\beta_1)}-u_1\right)=2^R.
\]
Since $R\neq r$, the expression  $u_22^{4(\alpha_2-\alpha_1)+(\beta_2-\beta_1)}-u_1$ is even which is possible only if $4(\alpha_2-\alpha_1)+(\beta_2-\beta_1)=0$. Since $|\beta_2-\beta_1|<4$, we conclude that $\alpha_1=\alpha_2$, and then, $\beta_1=\beta_2$. The definition of the Huwitz-Radon function implies that $\rho(n+2^R)=\rho(n)$.
\end{proof}

\begin{remark} If $R=r$, then the periodicity does not hold in general, observe e.g., $\rho(12)=\rho(3\cdot 2^2)=4\neq 9=\rho(16)=\rho(3\cdot 2^2+2^2)$.
\end{remark}

The sequence of values $\{\rho(n)\}_{n\geq 1}$ satisfies the following telescopic property, see Figure~1.

\begin{corollary}
The sequence $\{\rho(n)\}_{n=1}^{2^r-1}$ is the same as $\{\rho(n)\}_{n=2^r+1}^{2^{r+1}-1}$.
\end{corollary}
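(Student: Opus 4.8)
The plan is to deduce the corollary directly from Proposition~\ref{periodic}, being mindful that the letter $r$ plays two different roles: in the proposition it denotes the $2$-adic valuation of the argument, whereas in the corollary it is the fixed parameter bounding the range and simultaneously the size of the shift. To reconcile the two, I would apply the proposition with its shift exponent (its $R$) set equal to the corollary's $r$, and with the argument's valuation playing the role of the proposition's $r$.

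First I would observe that the asserted coincidence of the two finite sequences is equivalent to the term-by-term identity
\[
\rho(n) = \rho(n + 2^r), \qquad 1 \le n \le 2^r - 1,
\]
because the substitution $m = n + 2^r$ maps the index set $\{1, \dots, 2^r - 1\}$ bijectively onto $\{2^r + 1, \dots, 2^{r+1} - 1\}$.

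Next I would fix such an $n$ and write $n = u\,2^s$ with $u$ odd, so that $s$ is the $2$-adic valuation of $n$. Since $2^s \le n < 2^r$, we have $s \le r - 1$, hence $s \neq r$. Proposition~\ref{periodic}, applied to $n$ (whose valuation is $s$) with shift exponent $R = r \neq s$, then yields $\rho(n + 2^r) = \rho(n)$ at once. Letting $n$ run over $1, \dots, 2^r - 1$ gives the claim.

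I do not expect a genuine obstacle here; the only point requiring care is to confirm that the proposition applies at \emph{every} index of the range, i.e.\ that the valuation $s$ of $n$ never equals the shift exponent $r$ when $1 \le n \le 2^r - 1$. This equality fails precisely at the omitted middle value $n = 2^r$ (where $s = r$), in accordance with the Remark showing that periodicity breaks when the shift coincides with the valuation. This is exactly why the statement leaves $n = 2^r$ out of both sequences, so the gap in the indexing is consistent with the hypothesis of Proposition~\ref{periodic}.
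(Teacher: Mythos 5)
Your proof is correct and follows essentially the same route as the paper: both reduce the claim to the term-by-term identity $\rho(n)=\rho(n+2^r)$ for $1\le n\le 2^r-1$ and apply Proposition~\ref{periodic}, using that every such $n$ is less than $2^r$ so its $2$-adic valuation differs from the shift exponent $r$. Your write-up is in fact more careful than the paper's (which is quite terse and even miscounts the number of terms as $2^r-2$), particularly in disentangling the two roles of the letter $r$ and in noting why the excluded index $n=2^r$ is precisely where the hypothesis of the proposition would fail.
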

\begin{proof}
Indeed, the number of elements in both sequences is $2^r-2$, all values $n=1,\dots, 2^r-1<2^r$, therefore, \[\{\rho(n)\}_{n=1}^{2^r-1}=\{\rho(n+2^r)\}_{n=1}^{2^r-1}=\{\rho(n)\}_{n=2^r+1}^{2^{r+1}-1}\]
by the previous proposition.
\end{proof}


Theorem~\ref{Eckmann} was proved for integer matrices $E_k$ having entries among $\{0,\pm 1\}$, i.e., for a special kind of weighing matrices in~\cite{GP, Geramita79}. Moreover, the upper bound $\rho(2k)-1$ is achieved for integer Hurwitz-Radon matrices \cite[Theorem 1.6]{Geramita79}.
Relations~\eqref{otno} and Theorem~\ref{Eckmann} reveal the fact that  the HR-family of orthogonal matrices gives a representation of a Clifford algebra  $Cl_{p,0}$.

\begin{figure}
\begin{pspicture}(0,0)(18,5)
\psline[linestyle=dashed](0,2.5)(16,2.5)
\psline[linestyle=dashed](1.3,1.5)(1.3,3.5)
\rput(0.5,3){$n$} \rput(1.7,3){$1$}
\rput(0.5,2){$\rho(n)$} \rput(1.7,2){$1$}
\psframe(2,1.5)(2.7,3.5) \rput(2.35,2){$2$}\rput(2.35,3){$2$}\rput(2.4,4){$2^1$}
\rput(3,2){$1$}\rput(3,3){$3$}
\psframe(3.3,1.5)(4,3.5) \rput(3.65,2){$4$}\rput(3.65,3){$4$}\rput(3.7,4){$2^2$}
\rput(4.3,2){$1$}\rput(4.3,3){$5$}
\rput(4.8,2){$2$}\rput(4.8,3){$6$}
\rput(5.3,2){$1$}\rput(5.3,3){$7$}
\psframe(5.6,1.5)(6.3,3.5) \rput(5.95,2){$8$}\rput(5.95,3){$8$}\rput(6,4){$2^3$}
\rput(6.6,2){$1$}\rput(6.6,3){$9$}
\rput(7.1,2){$2$}\rput(7.1,3){$10$}
\rput(7.7,2){$1$}\rput(7.7,3){$11$}
\rput(8.3,2){$4$}\rput(8.3,3){$12$}
\rput(8.9,2){$1$}\rput(8.9,3){$13$}
\rput(9.5,2){$2$}\rput(9.5,3){$14$}
\rput(10.1,2){$1$}\rput(10.1,3){$15$}
\psframe(10.4,1.5)(11.1,3.5) \rput(10.75,2){$9$}\rput(10.75,3){$16$}\rput(10.75,4){$2^4$}
\rput(11.4,2){$1$}\rput(11.4,3){$17$}
\rput(12,2){$2$}\rput(12,3){$18$}
\rput(12.6,2){$1$}\rput(12.6,3){$19$}
\rput(13.2,2){$4$}\rput(13.2,3){$20$}
\rput(13.8,2){$1$}\rput(13.8,3){$21$}
\rput(14.4,2){$2$}\rput(14.4,3){$22$}
\rput(15,2){$1$}\rput(15,3){$23$}
\rput(15.6,2){$\dots$}\rput(15.6,3){$\dots$}
\psline(1.7,1.6)(1.7,1.2)
\psline(1.7,1.2)(3,1.2)
\psline{<-}(3,1.6)(3,1.2)
\psline(4.3,1.6)(4.3,1.2)
\psline(4.3,1.2)(5.3,1.2)
\psline(5.3,1.6)(5.3,1.2)
\psline(2.35,1.2)(2.35,1)
\psline(2.35,1)(4.8,1)
\psline{<-}(4.8,1.2)(4.8,1)
\psline(6.6,1.6)(6.6,1.2)
\psline(6.6,1.2)(10.1,1.2)
\psline(10.1,1.6)(10.1,1.2)
\psline(3.3,1)(3.3,0.8)
\psline(3.3,0.8)(8.3,0.8)
\psline{->}(8.3,0.8)(8.3,1.2)
\psline(11.4,1.6)(11.4,1.2)
\psline(11.4,1.2)(15,1.2)
\psline[linestyle=dashed](15,1.2)(16,1.2)
\psline(5.95,0.8)(5.95,0.5)
\psline(5.95,0.5)(14.4,0.5)
\psline{->}(14.4,0.5)(14.4,1.2)
\end{pspicture}
\caption{Telescopic property}
\end{figure}
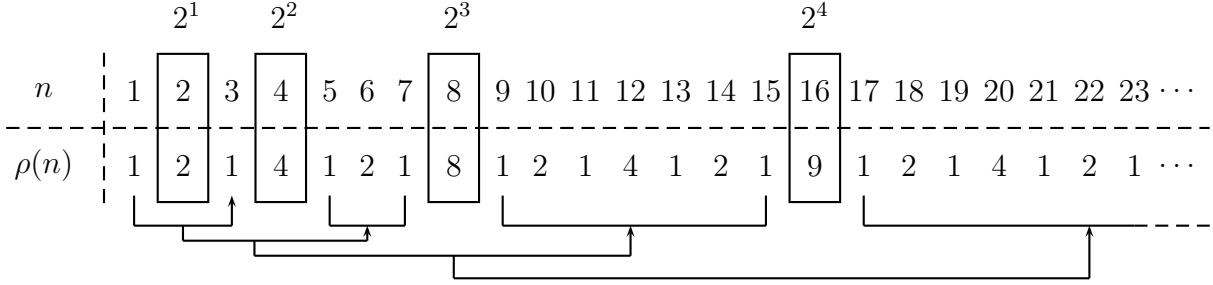

\begin{theorem}\label{t6}
Given a free two-step nilpotent  Lie algebra $N=U\oplus_{\perp}Z$ with an inner product and dim$(U)=2k$, there exists an ideal $A$ such that $N/A$ is an $H$-type algebra $\mathfrak{n}=\mathfrak{h}\oplus_{\perp} \mathfrak{z}$, where $\mathfrak{h}=U$ and the center is $\mathfrak{z}=Z/A$. Moreover, the ideal $A$ can be chosen such that $\dim(\mathfrak{z})$ takes every value from $\{1,\dots,\rho(2k)-1\}$. 
\end{theorem}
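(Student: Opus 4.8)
The plan is to build the center of the quotient as the span of a Hurwitz--Radon family living inside $Z$, and to take $A$ to be its orthogonal complement in $Z$. The deep input is the achievability of the Hurwitz--Radon bound by integer matrices: by Theorem~\ref{Eckmann} and \cite[Theorem 1.6]{Geramita79} there exist $\rho(2k)-1$ orthogonal $2k\times 2k$ matrices $E_1,\dots,E_{\rho(2k)-1}$ with entries in $\{0,\pm1\}$ satisfying $E_m^2=-I$ and $E_iE_j=-E_jE_i$. The elementary point I would record first is that any orthogonal $\{0,\pm1\}$-matrix $E$ with $E^2=-I$ is automatically a skew-symmetric signed permutation matrix: $E^2=-I$ together with $E^TE=I$ forces $E^T=-E$, while orthogonality and integrality force exactly one nonzero entry $\pm1$ in each row and column. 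Hence each $E_m$ encodes a fixed-point-free signed involution of $\{1,\dots,2k\}$, i.e. a perfect matching $\mathfrak{p}_{l_m}$ with coefficients $\alpha_{ij}=\pm1$; this is exactly the data of a vector $\omega_{l_m}=\sum_{(i,j)\in\mathfrak{p}_{l_m}}\alpha_{ij}(e_i\times e_j)\in Z$ whose operator $J_{\omega_{l_m}}$ from \eqref{eq:J} is the signed permutation encoded by $E_m$ (Proposition~\ref{prop3}).

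Next I would verify that $\omega_{l_1},\dots,\omega_{l_{\rho(2k)-1}}$ form an orthonormal system in $Z$. Since each matching contains $k$ pairs and $(e_i\times e_j,e_i\times e_j)_Z=1/k$, the normalization gives $(\omega_{l_m},\omega_{l_m})_Z=1$. For orthogonality it suffices to see that the matchings are edge-disjoint: if two anticommuting matrices $E_m,E_n$ shared an edge $\{a,b\}$, then both would restrict to $\pm$ the same right-angle rotation of $\spn\{e_a,e_b\}$, so $E_mE_n$ and $E_nE_m$ would agree and equal a nonzero multiple of the identity there, contradicting $E_mE_n=-E_nE_m$. Edge-disjointness then gives $(\omega_{l_m},\omega_{l_n})_Z=0$ for $m\neq n$, because the two sums share no common term $e_i\times e_j$.

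With these vectors fixed the construction is direct. For any $p$ with $1\le p\le\rho(2k)-1$ set $\Omega=\spn\{\omega_{l_1},\dots,\omega_{l_p}\}$ and let $A$ be the orthogonal complement of $\Omega$ in $Z$, so that $N=U\oplus_{\perp}\Omega\oplus_{\perp}A$. Because $A\subset Z$ and $Z$ is the center of $N$, we have $[N,A]=0$, so $A$ is an ideal and $\mathfrak{n}=N/A$ is two-step nilpotent with $\mathfrak{h}=U$ (as $U\cap A=0$) and $\mathfrak{z}=Z/A\cong\Omega$, $\dim\mathfrak{z}=p$. The induced operators satisfy $\tilde J_{\omega_{l_m}+A}=J_{\omega_{l_m}}$, so the classes $z_m=\omega_{l_m}+A$ form an orthogonal non-null basis of $\mathfrak{z}$ with $\tilde J_{z_m}^2=-\id_{\mathfrak{h}}$ and $\tilde J_{z_i}\tilde J_{z_j}=-\tilde J_{z_j}\tilde J_{z_i}$, i.e. relations~\eqref{otno}. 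Proposition~\ref{propOrt} then makes $\mathfrak{n}$ an $H$-type algebra; moreover $\mathfrak{z}$ is exactly the center, since the invertibility of $J_{z_1}$ prevents any nonzero $u\in U$ from satisfying $J_\omega u=0$ for all $\omega\in\Omega$. Letting $p$ run over $\{1,\dots,\rho(2k)-1\}$ realizes every prescribed dimension of the center.

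I expect the main obstacle to be this bridge between the linear-algebraic Hurwitz--Radon data and the metric-combinatorial structure of $Z$: one must argue that the integer HR matrices are precisely signed permutation matrices carried by edge-disjoint $1$-factors, so that the $\omega_{l_m}$ become an orthonormal system and Proposition~\ref{propOrt} applies word for word. The existence of such a family of the maximal size $\rho(2k)-1$ is the genuinely hard ingredient and is cited from \cite{Eckmann,Geramita79}; the remaining points---that $A$ is an ideal and that $\mathfrak{z}$ is the whole center---are routine consequences of two-step nilpotency and the invertibility of the maps $J_{z_m}$.
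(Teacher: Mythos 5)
Your proposal is correct and takes essentially the same route as the paper: both pass from the integer Hurwitz--Radon matrices of \cite[Theorem 1.6]{Geramita79} to skew-symmetric signed permutation matrices, read off the associated $1$-factors and vectors $\omega_{l_m}\in Z$, take $A$ to be the orthogonal complement of their span, and invoke Proposition~\ref{propOrt} to get the $H$-type structure. Your write-up is in fact slightly more complete than the paper's, since you explicitly verify edge-disjointness of the matchings (hence orthonormality of the $\omega_{l_m}$) and that $Z/A$ is exactly the center of $N/A$, points the paper leaves implicit.
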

\begin{proof} We reformulate this statement in the language of partitions as follows.
There exists a family of partitions $$\mathfrak{p}''=\{\mathfrak{p}''_{1},\dots, \mathfrak{p}''_{\rho(2k)-1}\}\subset \mathfrak{p}',$$ equivalent to a subset of 1-factorization of
the complete graph $K_{2k}$, such that the corresponding set of vectors $\{\omega_1,\dots,\omega_{\rho(2k)-1}\}$ with the coefficients $\alpha_{ij}=\pm 1$ defines $A$ as the orthogonal
complement to
span$\{\omega_1,\dots,\omega_{\rho(2k)-1}\}$ in $Z$, and $\mathfrak{z}=Z/A$ is the center of an $H$-type algebra. In what follows we describe the choice of $\mathfrak{p}''$.

We start by analysing the correspondence between the operators $J_{\omega_l}$, $l\in 1,\dots, 2k-1$, and the orthogonal matrices $E_l$. Indeed, an orthogonal matrix $E_l$ with entries $0,\pm 1$ is a permutation of the standard basis of $\mathbb R^{2k}$, if an only if, each row and each column contains only one entry $\pm1$ and others are 0. Respectively, $E_l$ is  a signed  permutation of the canonical basis, if and only if,
each row and each column contains only one entry 1 or (-1) and others are 0. The form of the partitions $\mathfrak{p}'$ guarantees that the matrices $E_l$ constructed
above with the entries $\alpha_{ij}$, $(i,j)\in \mathfrak{p_l}\in\mathfrak{p}'$ for $i<j$ and $\alpha_{ij}=-\alpha_{ji}$, are the sign permutation of the basis $\{e_1,\dots, e_{2k}\}$, i.e., we constructed a one-to-one correspondence
between the orthogonal sign permutation matrices $E_l$, satisfying the conditions of Theorem~\ref{Eckmann}, and the operators $J_{\omega_l}$. 

Given the set of integer matrices $\{E_{l_1},\dots E_{l_{\rho(2k)-1}}\}$, representing the maximal family of matrices in Theorem~\ref{Eckmann}, we set  
$\mathfrak{p}''$ to be the family of partitions
$\mathfrak{p}_{l_k}$ consisting of elements $(i,j)$ such that $\alpha_{ij}$ is the entry in the upper triangular part of the matrix $E_{l_k}$. According to $\mathfrak{p}''$,
we construct the vectors $\omega_{l_1},\dots, \omega_{l_{\rho(2k)-1}}$. Corresponding operators $J_{\omega_l}$ satisfy relations \eqref{otno} since the matrices $E_{l}$ do.
Now we set $\mathfrak{h}=U$ and $A$ to be the orthogonal complement in $Z$ to $\spn\{\omega_{l_1},\dots, \omega_{l_{\rho(2k)-1}}\}$. Then we define $\mathfrak{z}=Z/A$, and $\mathfrak{h}\oplus_{\perp} \mathfrak{z}$ is an $H$-type algebra $\mathfrak{n}$ by Proposition~\ref{p2}.
\end{proof}

The following theorem was proved in~\cite{Eber04, FurutaniMarkina, CrDod} by different methods.

\begin{theorem}\label{th.rational}
The $H$-type algebras admit rational structure constants.
\end{theorem}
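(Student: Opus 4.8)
The plan is to show that the $H$-type algebra $\mathfrak{n}=\mathfrak{h}\oplus_\perp\mathfrak{z}$ produced by Theorem~\ref{t6} already comes equipped with a basis having rational (indeed integer) structural constants, so that Theorem~\ref{MC} (Mal'cev's criterion) applies directly. The key observation is that the entire construction was carried out using the \emph{integer} Hurwitz-Radon matrices $E_{l_1},\dots,E_{l_{\rho(2k)-1}}$ whose entries lie in $\{0,\pm1\}$, as guaranteed by~\cite[Theorem~1.6]{Geramita79}. This rationality is inherited by the Lie brackets, and that is precisely what is needed.

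First I would fix the distinguished basis $\{e_1,\dots,e_{2k}\}$ of $\mathfrak{h}=U$ together with the vectors $\omega_{l_1},\dots,\omega_{l_{\rho(2k)-1}}$ spanning the quotient center $\mathfrak{z}=Z/A$; these serve as the candidate rational basis $\mathcal{B}$. The structural constants are the coefficients $c_{ij}^m$ appearing in $[e_i,e_j]=\sum_m c_{ij}^m\,\omega_{l_m}$ (after passing to the quotient by $A$). By the construction, $[e_i,e_j]=e_i\times e_j \pmod{A}$, and each $\omega_{l_m}=\sum_{(i,j)\in\mathfrak{p}_{l_m}}\alpha_{ij}(e_i\times e_j)$ with $\alpha_{ij}=\pm1$ drawn from the upper-triangular entries of the integer matrix $E_{l_m}$. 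Next I would expand $e_i\times e_j$ modulo $A$ in the basis $\{\omega_{l_m}\}$: since the $\{e_i\times e_j\}$ form an orthogonal basis of $Z$ and the $\omega_{l_m}$ are orthogonal (with $(\omega_{l_m},\omega_{l_m})_Z=1$) by the normalization chosen just before Theorem~\ref{t6}, one computes the coefficients by orthogonal projection,
\[
c_{ij}^m=\frac{(e_i\times e_j,\omega_{l_m})_Z}{(\omega_{l_m},\omega_{l_m})_Z}=\alpha_{ij}\in\{0,\pm1\},
\]
where the pair $(i,j)$ occurs in at most one partition $\mathfrak{p}_{l_m}$ because $\mathfrak{p}''\subset\mathfrak{p}'$ and each pair appears only once across a $1$-factorization. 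Hence the structural constants are integers, a fortiori rational.

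With integer structural constants in hand, Mal'cev's criterion (Theorem~\ref{MC}) immediately yields the existence of a lattice, and equivalently confirms that the $H$-type algebra admits a rational structure, completing the proof. The only mild subtlety, and the point I would expect to require the most care, is the bookkeeping verification that the projection coefficients genuinely collapse to the single value $\alpha_{ij}$: this relies simultaneously on the orthogonality of the basis $\{e_i\times e_j\}$ of $Z$, on the uniqueness of the occurrence of each index pair in the chosen family $\mathfrak{p}''$, and on the normalization $|(e_i\times e_j,e_i\times e_j)_Z|=1/k$ being matched by the normalization $(\omega_{l_m},\omega_{l_m})_Z=1$ so that no denominators survive. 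Once these normalizations are tracked consistently, rationality is automatic and no deeper input is needed; in particular the argument gives a genuinely combinatorial proof, independent of the analytic or Clifford-theoretic approaches of~\cite{Eber04, FurutaniMarkina, CrDod}.
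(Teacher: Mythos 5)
Your proposal follows essentially the same route as the paper's proof: fix the basis $\{e_i\}\cup\{\omega_{l_m}\ (\mathrm{mod}\ A)\}$, observe that $[e_i,e_j]=e_i\times e_j\ (\mathrm{mod}\ A)$, and identify $e_i\times e_j\ (\mathrm{mod}\ A)$ with its orthogonal projection onto $\spn\{\omega_{l_1},\dots,\omega_{l_{\rho(2k)-1}}\}$, using that each pair $(i,j)$ occurs in at most one partition of $\mathfrak{p}''$. The paper carries out this projection by hand, writing $e_n\times e_m=\frac{1}{k}\bigl(\alpha_{nm}\omega_l+\sum_{(i,j)}\omega_{(l;ij)}\bigr)$ with $\omega_{(l;ij)}\in A$, which is the same computation.

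However, your key numerical claim is wrong. With the paper's normalization $(e_i\times e_j,e_i\times e_j)_Z=1/k$ and $(\omega_{l_m},\omega_{l_m})_Z=1$ (each of the $k$ pairs in $\mathfrak{p}_{l_m}$ contributes $\alpha_{ij}^2/k=1/k$), the projection coefficient is
\[
c_{ij}^m=\frac{(e_i\times e_j,\omega_{l_m})_Z}{(\omega_{l_m},\omega_{l_m})_Z}=\frac{\alpha_{ij}/k}{1}=\frac{\alpha_{ij}}{k},
\]
not $\alpha_{ij}$. Denominators \emph{do} survive, so your remark that the two normalizations are matched ``so that no denominators survive'' is exactly backwards: in this basis the structure constants are $0,\pm\frac{1}{k}$ --- rational, but not integers unless $k=1$. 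This agrees with the paper's final formula $[e_n,e_m]=\frac{\alpha_{nm}}{k}\,\omega_l\ (\mathrm{mod}\ A)$. The slip is harmless for the theorem itself: $\pm\frac{1}{k}\in\mathbb{Q}$, and Mal'cev's criterion only requires rational constants, so your argument proves the statement once the coefficient is corrected. (If you insist on integer constants, rescale the central basis to $\frac{1}{k}\omega_{l_m}$, which turns the constants into $\alpha_{ij}\in\{0,\pm 1\}$; but as written, the integrality claim is false.)
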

\begin{proof}
Let $\mathfrak{h}=U$, and let $\mathfrak{z}=Z/A$ be chosen and  spanned by the vectors $\omega_l=\sum_{(i,j)\in \mathfrak{p}''_l}\alpha_{ij}\,e_i\times e_j$ modulo the ideal $A$.
Then $[e_n,e_m]=e_n\times e_m\,\text{(mod A)}\in \mathfrak{z}$. Let $(n,m)\in \mathfrak{p}''_l$ for some $l$. Write
\[
\omega_l=\alpha_{nm}e_n\times e_m+\sum_{(i,j)\in (\mathfrak{p}''_l\setminus (n,m))}\, \alpha_{ij}\,e_i\times e_j,
\]
and set
\[
\omega_{(l;ij)}=\alpha_{nm}e_n\times e_m-\alpha_{ij}\,e_i\times e_j,\quad (i,j)\in (\mathfrak{p}''_l\setminus (n,m))
\]
Then, $\omega_l\perp \omega_{(l;ij)}$ and $\omega_{(l;ij)}\in A$.
So we write $$e_n\times e_m\,\text{(mod A)}=\frac{1}{k}\left(\alpha_{nm}\omega_l+\sum_{(i,j)\in (\mathfrak{p}''_l\setminus (n,m))}\omega_{(l;ij)}\right)\, \text{(mod A)}=\frac{\alpha_{nm}}{k}\omega_l \,\text{(mod A)}.$$
\end{proof}

\begin{remark} Observe that $\omega_l= \frac{1}{2}\sum_{j=1}^{2k}e_j\times E_le_j =\frac{1}{2}\sum_{j=1}^{2k}e_j\times J_{\omega_l}e_j$.
\end{remark}

As it was observed the existence of integer matrices $\{E_{l_1},\dots E_{l_{\rho(2k)-1}}\}$ from the Hurwitz-Radon family was proved in \cite[Theorem 1.6]{Geramita79}.
The orthogonality condition assures that they are skew-symmetric and each of raws and columns contains exactly one non-zero element. 

We are not focused on algorithms of calculation of matrices $E_l$, vectors $\omega_l$ and operators $J_{l}$ in full generality which is the subject of computational algebra, however, we look at an illustrative example in the next subsection.

\subsubsection{The case $k=2$} Let us have a close look at the simplest example of $U$ generated by $\{e_1,e_2,e_3,e_4\}$ and
\[
\begin{array}{ccc}
\omega_1 &=& \alpha_{12}e_1\times e_2+\alpha_{34}e_3\times e_4,\\
\omega_2 &=& \alpha_{13}e_1\times e_3+\alpha_{24}e_2\times e_4,\\
\omega_3 &= &\alpha_{14}e_1\times e_4+\alpha_{23}e_2\times e_3,
\end{array}
\]
where $\omega_i$ are orthogonal with respect to the inner product. That is we have chosen $\mathfrak{p}''=\mathfrak{p}'\subset \mathfrak{p}$ such that $\mathfrak{p}''=\{\mathfrak{p}_1,\mathfrak{p}_2,\mathfrak{p}_3\}$, where
\[
\begin{array}{cccc} \mathfrak{p}_1&=&12 & 34,  \\ 
 \mathfrak{p}_2&=&13 & 24,  \\
  \mathfrak{p}_3&=&14 & 23.   
\end{array}
\]
Definition of the operator $J$ and the choice of the inner product imply that $J^2_{\omega_m}=-\id_{U}$. In order to define the coefficients $\alpha$ we need to check anti-commutativity of 
$J_{\omega_1}$, $J_{\omega_2}$, and $J_{\omega_3}$ on the basis. We calculate
\[
\begin{array}{lcccr}
e_1&\stackrel{J_{\omega_1}}\longrightarrow & \alpha_{12}e_2 &\stackrel{J_{\omega_2}}\longrightarrow&\,\,\,\, \alpha_{12}\alpha_{24}e_4,\\
e_1&\stackrel{J_{\omega_2}}\longrightarrow & \alpha_{13}e_3 &\stackrel{J_{\omega_1}}\longrightarrow&\,\, \,\,\alpha_{13}\alpha_{34}e_4,
\end{array}
\]
\[
\begin{array}{lcccr}
e_1&\stackrel{J_{\omega_2}}\longrightarrow & \alpha_{13}e_3 &\stackrel{J_{\omega_3}}\longrightarrow& -\alpha_{13}\alpha_{23}e_2,\\
e_1&\stackrel{J_{\omega_3}}\longrightarrow & \alpha_{14}e_4 &\stackrel{J_{\omega_2}}\longrightarrow& -\alpha_{14}\alpha_{24}e_2,
\end{array}
\]
\[
\begin{array}{lcccr}
e_1&\stackrel{J_{\omega_3}}\longrightarrow & \alpha_{14}e_4 &\stackrel{J_{\omega_1}}\longrightarrow& -\alpha_{14}\alpha_{34}e_3,\\
e_1&\stackrel{J_{\omega_1}}\longrightarrow & \alpha_{12}e_2 &\stackrel{J_{\omega_3}}\longrightarrow& \alpha_{12}\alpha_{23}e_3,
\end{array}
\]
and in order to satisfy anticommutativity we write the homogeneous equations
\begin{equation}\label{system}
\alpha_{12}\alpha_{24}+\alpha_{13}\alpha_{34}=0, \quad \alpha_{13}\alpha_{23}+
\alpha_{14}\alpha_{24}=0, \quad \alpha_{14}\alpha_{34}-\alpha_{12}\alpha_{23}=0.
\end{equation}
Analogously we start with the vectors $e_2,e_3,$ and $e_4$. We arrive at three additional equations
\begin{equation}\label{system2}
\alpha_{12}\alpha_{13}+\alpha_{24}\alpha_{34}=0, \quad \alpha_{13}\alpha_{14}+
\alpha_{23}\alpha_{24}=0, \quad \alpha_{12}\alpha_{14}-\alpha_{23}\alpha_{34}=0,
\end{equation}
which we solve over the integers $(1,-1)\in \mathbb Z$. However,  the systems \eqref{system} and \eqref{system2} are equivalent. Indeed, multiplying the first equation in \eqref{system2} by
$\alpha_{12}\alpha_{34}$ we obtain the first equation in \eqref{system}, multiplying the second equation in \eqref{system2} by
$\alpha_{14}\alpha_{23}$ we obtain the second equation in \eqref{system}, and finally multiplying the third equation in \eqref{system2} by
$\alpha_{13}\alpha_{24}$ we obtain the third equation in \eqref{system}. This  reflects the fact that we can  check anticommutativity of $J_{\omega_1}$, $J_{\omega_2}$, and $J_{\omega_3}$ starting only with one vector, e.g., $e_1$. 

The operators $J_{\omega_1}$, $J_{\omega_2}$, and $J_{\omega_3}$ define the orthogonal matrices $E_1$, $E_2$, and $E_3$ as it was described in Theorem~\ref{t6}
\[
E_1=\left(
\begin{array}{rrrr} 0 & -\alpha_{12} & 0 & 0 \\ 
\alpha_{12} & 0 & 0 & 0 \\ 
0 & 0 & 0 & -\alpha_{34} \\
0 & 0 &  \alpha_{34}  & 0 \\  
\end{array}
\right),\quad E_2=\left(
\begin{array}{rrrr} 0 & 0 & -\alpha_{13} & 0 \\ 
0 & 0 & 0 & -\alpha_{24} \\ 
\alpha_{13} & 0 & 0 & 0 \\
0 & \alpha_{24} &  0  & 0 \\  
\end{array}
\right),
\]
\[
E_3=\left(
\begin{array}{rrrr} 0 & 0 & 0 & -\alpha_{14} \\ 
0 & 0 & -\alpha_{23} & 0 \\ 
0 & \alpha_{23} & 0 & 0 \\
\alpha_{14} & 0 &  0  & 0 \\  
\end{array}
\right),
\]
which satisfy the conditions 
\begin{equation}\label{CondE}
E_j^2=-I\quad \text{and}  \quad E_iE_j=-E_jE_i.
\end{equation}
 In this case, in view of Hurwitz-Radon-Eckmann's Theorem~\ref{Eckmann}, $u=1$, $\alpha=0$, $\beta=2$, and $\rho(4)-1=3$.

The system (\ref{system}) has a non-unique solution in integers $\pm 1$. This fact can be viewed in the context of algebraic geometry and computational algebra. At the same time, this fact
refers to the existence of a special type of weighing matrices, see \cite{Geramita79} in which each line of $E_j$ contains only one non-zero entry. Let us define a symmetric square $\mathbb{S}^2(\mathfrak{h})=\mathfrak{h}\vee \mathfrak{h}$ of the vector space $\mathfrak{h}=\spn\{(x_1,\dots, x_m)^t\}$ of dimension $m=2k$, and let $\mathbb{P}(\mathbb{S}^2(\mathfrak{h}))$ be its projectivization of (projective) dimension $\frac{1}{2}(m-1)(m+2)$,
\[
\mathbb{P}(\mathbb{S}^2(\mathfrak{h}))=\{(y_{11},y_{12},\dots,y_{ij},\dots, y_{mm}),\quad y_{ij}\neq 0, \quad i\leq j\}.
\]
The Veronese embedding $\varphi \colon \mathbb{P}(\mathfrak{h})\to \mathbb{P}(\mathbb{S}^2(\mathfrak{h}))$ is defined by
\[
(x_1,\dots, x_m)^t\mapsto (x_1^2,x_1x_2,\dots,x_1x_m,x_2^2,x_2x_3\dots, x_m^2)^t.
\]
The Veronese variety $\mathcal{V}(\mathfrak{h})$, see e.g., \cite{Cox}, is the image $\varphi( \mathbb{P}(\mathfrak{h}))=\mathcal{V}(\mathfrak{h})\subset \mathbb{P}(\mathbb{S}^2(\mathfrak{h}))$. The intersection of the Veronese variety  $\mathcal{V}(\mathfrak{h})$ with a generic $(\frac{1}{2}(m-1)(m+2)-m+1)$-dimensional hyperplane has $2^{m-1}$ points, so $\mathcal{V}(\mathfrak{h})$ is of order $2^{m-1}$.

The vector space $\mathbb{S}^2(\mathfrak{h})$ and its projectivization $\mathbb{P}(\mathbb{S}^2(\mathfrak{h}))$ can also be realized in terms of symmetric $m\times m$-matrices
\[
\mathbf{y}=\left(
\begin{array}{rrrr} y_{11} & y_{12} & \dots & y_{1m} \\ 
y_{12} & y_{22} &\dots & y_{2m} \\ 
\dots & \dots & \dots & \dots \\
y_{1m} & y_{2m} & \dots  & y_{mm} \\  
\end{array}
\right).
\]
Then the Veronese embedding $\varphi \colon \mathbb{P}(\mathfrak{h})\to \mathbb{P}(\mathbb{S}^2(\mathfrak{h}))$ is defined as
\[
\left(
\begin{array}{c} x_{1} \\
x_2\\
\vdots\\
x_m
\end{array}
\right)\mapsto
\left(
\begin{array}{c} x_{1} \\
x_2\\
\vdots\\
x_m
\end{array}
\right)(x_1,x_2,\dots,x_m)=
\left(
\begin{array}{cccc} x_{1}^2 & x_{1}x_2 & \dots & x_{1}x_m \\ 
x_1 x_2 & x_2^2 &\dots & x_{2} x_m \\ 
\dots & \dots & \dots & \dots \\
x_{1}x_m & x_{2}x_m & \dots  & x_{m}^2 \\  
\end{array}
\right).
\]
A point $\mathbf{y}$ is in $\mathcal{V}(\mathfrak{h})$ if and only if the matrix corresponding to $\mathbf{y}$ is of rank 1. An $m\times m$ symmetric matrix has
$\frac{1}{12}(m-1)m^2(m+1)$ independent 2-minors, which all vanish for the rank 1 matrix $\mathbf{y}$. Each 2-minor defines a quadric in $\mathbb{P}(\mathbb{S}^2(\mathfrak{h}))$
and $\mathbf{y}\in \mathcal{V}(\mathfrak{h})$ if and only if $\mathbf{y}$ lies in the intersection of these quadrics. The linearization algorithm can be viewed as follows, see e.g., \cite{Murphy}.
Consider a system of $p$ independent homogeneous quadratic equations $f_1=0,\dots,$ $f_p=0$ with the projective point $\mathbf{x}$ as a non-trivial solution. This system
is projected to the linear system of $p$ equations $\hat{f}_1=0,\dots, \hat{f}_p=0$ in terms of variables $\mathbf{y}$. Define the projective subspace $\mathcal H\subset \mathbb{P}(\mathbb{S}^2(\mathfrak{h}))$ as the intersection of $p$ hyperplanes defined by $\hat{f}_1=0,\dots, \hat{f}_p=0$. So we have that the solution $\mathbf{x}$ is embedded into
the intersection $\mathcal H\cap \mathcal{V}(\mathfrak{h})$. Thus the solution is obtained by $\mathbf{x}=\varphi^{-1}(\mathcal H\cap \mathcal{V}(\mathfrak{h}))$. Since $\mathcal{V}(\mathfrak{h})$ contains no non-trivial linear subspaces,  the algorithm is efficient if $\mathcal H$ consists of a finite number of isolated points.

Let us denote $x_1=\alpha_{12}$, $x_2=\alpha_{24}$, $x_3=\alpha_{13}$, $x_4=\alpha_{34}$, $x_5=\alpha_{23}$, $x_6=\alpha_{14}$, then
$y_{12}=\alpha_{12}\alpha_{24}$, $y_{13}=\alpha_{12}\alpha_{13}$, $y_{24}=\alpha_{24}\alpha_{34}$, $y_{34}=\alpha_{13}\alpha_{34}$, $y_{35}= \alpha_{13}\alpha_{23}$, $y_{26}=\alpha_{14}\alpha_{24}$, $y_{46}=\alpha_{14}\alpha_{34}$, $y_{15}=\alpha_{12}\alpha_{23}$, $y_{36}=\alpha_{13}\alpha_{14}$, $y_{25}=\alpha_{23}\alpha_{24}$, $y_{16}=\alpha_{12}\alpha_{14}$, $y_{45}=\alpha_{23}\alpha_{34}$. 
The equations \eqref{CondE} are equivalent to the linear homogeneous system $\hat{f}_1=0,\dots, \hat{f}_{8}=0$
\[
y_{11}-y_{22}=0,\quad y_{11}-y_{33}=0, \quad y_{11}-y_{44}=0,\quad y_{11}-y_{55}=0,\quad y_{11}-y_{66}=0,
\]
\[
y_{12}+y_{34}=0,\quad y_{35}+y_{26}=0,\quad y_{46}-y_{15}=0.
\]
In the projective coordinates this system has 3 solutions which we write in the matrix form as
\[
\begin{pmatrix}
1 & 1 & y_{13} &  y_{14} & 1& y_{16} \\
1 & 1 & y_{23} &  y_{24} & y_{25} & -1 \\
y_{13} & y_{23} & 1 &  -1 &1 & y_{36} \\
y_{14} & y_{24} & -1& 1& y_{45} & 1 \\
1 & y_{25} & 1&  y_{45} & 1 & y_{56} \\
y_{16} & -1 & y_{36} &  1 & y_{56} & 1 
\end{pmatrix},\quad
\begin{pmatrix}
1 & 1 & y_{13} &  y_{14} & 1& y_{16} \\
1 & 1 & y_{23} &  y_{24} & y_{25} & 1 \\
y_{13} & y_{23} & 1 &  -1 & -1 & y_{36} \\
y_{14} & y_{24} & -1 & 1& y_{45} & 1 \\
1& y_{25} & -1 &  y_{45} & 1 & y_{56} \\
y_{16} & 1 & y_{36} &  1 & y_{56} & 1 
\end{pmatrix},
\]
\[
\begin{pmatrix}
1 & 1 & y_{13} &  y_{14} & -1& y_{16} \\
1 & 1 & y_{23} &  y_{24} & y_{25} & -1 \\
y_{13} & y_{23} & 1 &  -1 & 1 & y_{36} \\
y_{14} & y_{24} & -1 & 1& y_{45} & -1 \\
-1 & y_{25} &1 &  y_{45} & 1 & y_{56} \\
y_{16} & -1 & y_{36} &  -1 & y_{56} & 1 
\end{pmatrix}.
\]
In the original variables we have 6 solutions in coordinates 
\[
(\alpha_{12},\alpha_{24},\alpha_{13},\alpha_{34}, \alpha_{23},\alpha_{14})
\]
as
\[
(1,1,1,-1,1,-1),\quad (1,1,-1,1,1,1),\quad (1,1,-1,1,-1,-1),
\]
\[
(-1,-1,-1,1,-1,1),\quad (-1,-1,1,-1,-1,-1),\quad (-1,-1,1,-1,1,1).
\]
The values of the remaining `free' variables $y_{13},\dots,  y_{56}$ then follow. 
Fixing $\alpha_{12}=1$ (or the first line of solutions) we arrive at three groups of vectors
\begin{equation}\label{s1}
\Omega_1=\spn\left(\begin{array}{ccc}
\omega^{\Omega_1}_1 &=& e_1\times e_2-e_3\times e_4\\
\omega^{\Omega_1}_2 &=& e_1\times e_3+e_2\times e_4\\
\omega^{\Omega_1}_3 &= & -e_1\times e_4+e_2\times e_3
\end{array}\right),
\end{equation}
\begin{equation}\label{s2}
\Omega_2=\spn\left(
\begin{array}{ccc}
\omega^{\Omega_2}_1 &=& e_1\times e_2+e_3\times e_4\\
\omega^{\Omega_2}_2 &=& -e_1\times e_3+e_2\times e_4\\
\omega^{\Omega_2}_3 &= & e_1\times e_4+e_2\times e_3
\end{array}\right),
\end{equation}
and
\begin{equation}\label{s3}
\Omega_3=\spn\left(
\begin{array}{ccc}
\omega^{\Omega_3}_1 &=& e_1\times e_2+e_3\times e_4\\
\omega^{\Omega_3}_2 &=& -e_1\times e_3+e_2\times e_4\\
\omega^{\Omega_3}_3 &= & -e_1\times e_4-e_2\times e_3
\end{array}\right).
\end{equation}
Recall that now we construct the ideals $A_1$, $A_2$, and $A_3$ as the orthogonal complements to $\Omega_1$, $\Omega_2$, and $\Omega_3$ in $Z$:
\begin{equation}\label{a1}
A_1=\spn\left(\begin{array}{ccc}
\omega^{A_1}_1 &=& e_1\times e_2+e_3\times e_4\\
\omega^{A_1}_2 &=& e_1\times e_3-e_2\times e_4\\
\omega^{A_1}_3 &= & -e_1\times e_4-e_2\times e_3
\end{array}\right),
\end{equation}
\begin{equation}\label{a2}
A_2=\spn\left(
\begin{array}{ccc}
\omega^{A_2}_1 &=& e_1\times e_2-e_3\times e_4\\
\omega^{A_2}_2 &=& -e_1\times e_3-e_2\times e_4\\
\omega^{A_2}_3 &= & e_1\times e_4-e_2\times e_3
\end{array}\right),
\end{equation}
and
\begin{equation}\label{a3}
A_3=\spn\left(
\begin{array}{ccc}
\omega^{A_3}_1 &=& e_1\times e_2-e_3\times e_4\\
\omega^{A_3}_2 &=& -e_1\times e_3-e_2\times e_4\\
\omega^{A_3}_3 &= & -e_1\times e_4+e_2\times e_3
\end{array}\right).
\end{equation}
Then, $\mathfrak{z}_j=\Omega_j(\text{mod $A_j$})=Z/A_j$, $j=1,2,3$.
The corresponding $H$-type groups are $\mathfrak{n}_j=\mathfrak{h}_j\oplus_{\perp}\mathfrak{z}_j$, where $\mathfrak{h}_j=U(\text{mod $A_j$})$. Fixing $\alpha_{12}=-1$ leads us to the inverse  matrices $E^{-1}_{l}=-E_{l}$ and the inverse operators $J^{-1}_{\omega_l}=-J_{\omega_l}$ corresponding to $\omega_l$, which are obtained also from the solutions to~\eqref{system}.

\subsection{Lie algebra isomorphism of solutions}

Let us remind some facts from the general theory of Lie algebras, see, e.g., \cite[Page 30]{Knapp}. Let $\mathfrak{g}$ be a Lie algebra, and 
let $\mathfrak{a}$  be an  ideal in  $\mathfrak{g}$. Then the vector space quotient $\mathfrak{g}/\mathfrak{a}$ becomes a Lie algebra under the definition of Lie brackets $[u+\mathfrak{a}, v+\mathfrak{a}]=[u,v]+\mathfrak{a}$. The quotient map $q\colon \mathfrak{g}\to\mathfrak{g}/\mathfrak{a}$ is a Lie algebra homomorphism and $\mathfrak{a}=\ker q$. Let $\pi\colon \mathfrak{g}\to \mathfrak{h}$ be another Lie algebra homomorphism. If $\mathfrak{a}=\ker\pi$, then $p\colon\mathfrak{g}/\mathfrak{a}\to\mathfrak{h}$, $\pi=p\circ q$, is a Lie algebra isomorphism.
 
In our case,  $N=U\oplus \Omega_j\oplus A_j$ and $\mathfrak{n}_j=N/A_j$. Let $B\colon N\to N$ be a free Lie algebra isomorphism such that $B(A_i)=A_j$. Then, denote
$q_j\colon N\to N/A_j$, $\pi= q_i\circ B$, and $q=q_j$. Then there is a Lie algebra isomorphism $p\colon N/A_j\to N/A_i$.

Let us now check the isomorphism of $H$-type structures on $\mathfrak{n}_j$. 
We construct the operator $B$ on $N$ such that
\begin{enumerate}
\item $B$ is a  free Lie algebra $N$ isomorphism;
\item $B(A_i)=A_j$;
\item $B$ is an isometry of $N$ preserving the orthogonal decomposition $$B\colon U\oplus_{\bot} \Omega_i\oplus_{\bot} A_i\to U\oplus_{\bot} \Omega_j\oplus_{\bot} A_j.$$
\end{enumerate}

The condition (1) implies that $B(u\times v)=Bu\times Bv$ for all $u,v\in U$, and (3) implies $(Bu, Bv)_{U}=(u,v)_U$ and
\begin{equation}\label{om1}
(B\omega, Bu\times Bv)_Z=(\omega, u\times v)_Z,\quad \text{for all $\omega\in Z$}.
\end{equation}
The condition (3) implies also that $B^{T}=B^{-1}$. Then
\begin{equation}\label{om2}
(J_{B\omega}Bu,Bv)_U=(J_{\omega}u,v)_U, \quad \text{for all $\omega\in Z$},
\end{equation}
and therefore,
 \begin{equation}\label{om3}
J_{B\omega}=BJ_{\omega}B^{-1}.
\end{equation}
 Obviously, the operators $J_{\omega}$ and $BJ_{\omega}B^{-1}$ satisfy conditions~\eqref{otno} on $\Omega_i$ and $\Omega_j$ respectively.
 The conditions (1-2) imply the Lie algebra isomorphism $p\colon \mathfrak{n_i}\to\mathfrak n_j$ induced by $B$. The quotient maps $q_j\colon N\to N/A_j=\mathfrak n_j$ are the orthogonal projections, hence $p$ is an isometry preserving the orthogonal decompositions $p\colon \mathfrak{h}_i\oplus_{\bot}\mathfrak{z}_i \to  \mathfrak{h}_j\oplus_{\bot}\mathfrak{z}_j$. This will guarantee that
 \[
 (z, [\tilde{u},\tilde{v}])_{\mathfrak{z}_i}= (pz, [p\tilde{u},p\tilde{v}])_{\mathfrak{z}_j}.
 \] 
Then analogously to (\ref{om1}--\ref{om3}) we deduce that the isomorphism of $H$-type structures is given by $\tilde{J}_{pz}=p\tilde{J}_{z}p^{-1}.$

Before we prove a general result on isomorphisms we look at our illustrative example for $k=2$.

\subsubsection{The case $k=2$}
Let us realize this scheme constructing a Lie algebra isomorphism $\mathfrak{n}_1\to\mathfrak{n}_3$ with the free Lie algebra isomorphism $B$ as follows. We define $B$ on $U$ as $$B\colon e_1\to e_2,\quad e_2\to -e_1,\quad e_3\to e_4,\quad \text{and \  $e_4\to e_3$}.$$
Then $B(e_i\times e_j):= Be_i\times Be_j$. Then, $\omega^{\Omega_3}_l=B(\omega^{\Omega_1}_l)$, and $\omega^{A_3}_l=B(\omega^{A_1}_l)$ for $l=1,2,3$. 
The corresponding matrix $B$ satisfies the orthogonality
condition $B^{-1}=B^T$ on $U$. The condition \eqref{om1} follows from the equality
\[
(B(e_a\times e_b), Be_i\times Be_j)_Z=(Be_a\times Be_b, Be_i\times Be_j)_Z=\frac{1}{k}\delta_{a,i}\delta_{b,j}=(e_a\times e_b, e_i\times e_j)_Z.
\]

Analogously, there exists an isomorphism $C$  between $\mathfrak{n}_1$ and $\mathfrak{n}_2$ acting as a signed basis permutation 
$$C\colon e_1\to e_1,\quad e_2\to e_2,\quad e_3\to e_4,\quad \text{and}\quad e_4\to e_3.$$
Correspondingly, $C(\omega^{\Omega_1}_1)=\omega^{\Omega_2}_1$, $C(\omega^{\Omega_1}_2)=\omega^{\Omega_2}_3$, and $C(\omega^{\Omega_1}_3)=\omega^{\Omega_2}_2$.

Thus,
all solutions give isomorphic $H$-type algebras $\mathfrak{n}$ with 3-dimensional center $\mathfrak{z}$ and 4-dimensional  space $\mathfrak{h}$. 

Of course, all 2-dimensional  centers can be realized 
by choosing $\mathfrak{z}$ to be generated by any two vectors $\omega$ from each of six solutions. In order to show isomorphisms
between all 18 options, it is sufficient to establish isomorphism between all possible three pairs in one of the solutions, and then, to compose 
with already established isomorphisms $B$ and $C$.
For example, for $\Omega_1$, we have 
\begin{itemize}
\item $(\omega^{\Omega_1}_1, \omega^{\Omega_1}_2)\simeq (\omega^{\Omega_1}_3, \omega^{\Omega_1}_2)$
by permutation $e_1\to e_1$,  $e_2\to -e_4$, $e_3\to e_3$, and  $e_4\to e_2$; 
\item $(\omega^{\Omega_1}_1, \omega^{\Omega_1}_2)\simeq (\omega^{\Omega_1}_1, \omega^{\Omega_1}_3)$
by permutation $e_1\to e_1$,  $e_2\to e_2$, $e_3\to -e_4$, and  $e_4\to e_3$.
\end{itemize}
Then we construct the orthogonal decompositions  $N=U\oplus_{\bot} \Xi_j\oplus_{\bot} F_j$, $j=1,2,3$, where $\Xi_1=\spn(\omega^{\Omega_1}_1, \omega^{\Omega_1}_2)$, $\Xi_2=\spn(\omega^{\Omega_1}_2, \omega^{\Omega_1}_3)$, $\Xi_1=\spn(\omega^{\Omega_1}_1, \omega^{\Omega_1}_3)$ and $F_j$ is the orthogonal complement in $Z$ to $\Xi_j$. The above signed basis permutations give isomorphisms between the $H$-type Lie algebras $N/F_j$.

Observe that dimension 1 of $\mathfrak{z}$ can be chosen always for any even dimension of $\mathfrak{h}$ and all such algebras will generate one and the same Heisenberg algebra.

\subsubsection{Existence of isomorphism}

Let us turn to the general case $k\geq 2$, and let us construct $B\colon N\to N$ satisfying the conditions at the beginning of Section~3.3. 
Let $\{E_l\}_{l=1}^{r}$ and $\{\tilde{E}_l\}_{l=1}^{r}$, $r=\rho(2k)-1$ be two different HR families of orthogonal matrices, i.e., solutions $E_l$ to the equations $E_j^2=-I$ and $E_iE_j=-E_jE_i$ with
the entries $0,\pm 1$, and let $B$ be a unitary signed basis permutation of $U$. Our method works for any $r\leq \rho(2k)-1$, but for simplicity we describe the case $r= \rho(2k)-1$. 

In order to find an isomorphism $p$ between $H$-type algebras we must find a signed basis permutation $B$ on $U$ satisfying 
 the system of $\rho(2k)-1$ equations  
$$
\tilde{E}_l=BE_l B^{-1}
$$ 
equivalent to \eqref{om3}.

We need the following technical combinatorial lemma.

\begin{lemma}\label{combA} Let $1, a_1,\dots, a_r$ be elements of a commutative ring with the multiplicative unity $1$ over  $\mathbb R$, such that $a_j^2=1$, $j=1,\dots,r$.
Then
\[
\left(r-\sum\limits_{j=1}^ra_j\right)\prod\limits_{j=1}^r(1+a_j)=0.
\]
\end{lemma}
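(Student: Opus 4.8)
The plan is to reduce the whole identity to the single elementary relation
$(1+a_j)(1-a_j)=1-a_j^2=0$, which holds for each $j$ by the hypothesis $a_j^2=1$. The first step is a purely formal rewriting of the integer coefficient: interpreting $r=r\cdot 1=\underbrace{1+\cdots+1}_{r}$ as the additive iterate of the unity, we have the identity
\[
r-\sum\limits_{j=1}^r a_j=\sum\limits_{j=1}^r(1-a_j)
\]
in the ring. Substituting this into the left-hand side of the claim, the expression becomes $\sum_{j=1}^r(1-a_j)\prod_{k=1}^r(1+a_k)$, so it suffices to show that every summand vanishes.

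Next I would treat each summand $(1-a_j)\prod_{k=1}^r(1+a_k)$ in isolation. Since the ring is commutative, the factor $(1+a_j)$ appearing in the product may be separated out, giving
\[
(1-a_j)\prod\limits_{k=1}^r(1+a_k)=(1-a_j)(1+a_j)\prod\limits_{k\neq j}(1+a_k)=(1-a_j^2)\prod\limits_{k\neq j}(1+a_k).
\]
Because $a_j^2=1$, the leading factor $1-a_j^2$ is zero, and hence each summand equals $0$. Summing over $j=1,\dots,r$ then yields $\left(r-\sum_{j=1}^r a_j\right)\prod_{k=1}^r(1+a_k)=0$, which is the assertion.

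I do not expect any genuine obstacle here: the statement is a clean algebraic identity and the argument is essentially a one-line consequence of $(1+a_j)(1-a_j)=0$. The only point deserving a word of care is the appeal to commutativity, which is what allows the matching factor $(1+a_j)$ to be extracted from the product for each index $j$; this is precisely where the hypothesis on the ring is used.
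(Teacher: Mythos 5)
Your proof is correct and is essentially the paper's own argument in a slightly rearranged form: the paper observes that $a_i\prod_{j=1}^r(1+a_j)=\prod_{j=1}^r(1+a_j)$ (since $a_i(1+a_i)=a_i+a_i^2=1+a_i$), which is the same one-line use of commutativity and $a_j^2=1$ as your identity $(1-a_j)\prod_{k=1}^r(1+a_k)=(1-a_j^2)\prod_{k\neq j}(1+a_k)=0$. Summing over the index then gives the claim in both versions, so there is no substantive difference.
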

\begin{proof}
We observe that $a_i\prod\limits_{j=1}^r(1+a_j)=\prod\limits_{j=1}^r(1+a_j)$. Therefore, $$\sum\limits_{j=1}^ra_j\prod\limits_{j=1}^r(1+a_j)=r \prod\limits_{j=1}^r(1+a_j).$$
\end{proof}

\begin{proposition}
Let $\{E_l\}_{l=1}^{r}$ and $\{\tilde{E}_l\}_{l=1}^{r}$, $r=\rho(2k)-1$ be two different HR families of orthogonal matrices with entries $0,\pm 1$. Then,  there exists a signed  permutations $B$ of the basis $U$, such that $\tilde{E}_l=BE_lB^{-1}$ for all $l=1,2,\dots r$.
\end{proposition}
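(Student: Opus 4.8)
The plan is to realize the sought matrix $B$ as a common fixed point of a family of commuting involutions acting on $\Endo(U)=M_{2k}(\R)$, built from the two families by Lemma~\ref{combA}, and then to upgrade this fixed point to an honest signed permutation. First I would record the algebraic input: since $E_l$ and $\tilde E_l$ are orthogonal with $E_l^2=\tilde E_l^2=-I$, we have $E_l^{-1}=-E_l$ and $\tilde E_l^{-1}=-\tilde E_l$, and both families obey the same anticommutation relations of Theorem~\ref{Eckmann}. On $M_{2k}(\R)$ define the linear maps $\Phi_l(X)=\tilde E_l\,X\,E_l^{-1}$. A direct computation using $\tilde E_l^2=E_l^{-2}=-I$ gives $\Phi_l^2=\id$, and using $\tilde E_i\tilde E_j=-\tilde E_j\tilde E_i$ together with $E_j^{-1}E_i^{-1}=-E_i^{-1}E_j^{-1}$ gives $\Phi_i\Phi_j=\Phi_j\Phi_i$. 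Thus the $\Phi_l$ are commuting involutions, so the elements $a_l:=\Phi_l$ of the commutative ring they generate satisfy $a_l^2=1$ and Lemma~\ref{combA} applies.

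Next I would set $P=\prod_{l=1}^r(1+\Phi_l)$. Because $\Phi_l(1+\Phi_l)=1+\Phi_l$ and the $\Phi_l$ commute, every element in the image of $P$ is fixed by each $\Phi_l$; equivalently, for any seed matrix $X_0$ the matrix $B_0:=P(X_0)=\sum_{S\subseteq\{1,\dots,r\}}\tilde E_S\,X_0\,E_S^{-1}$ satisfies $\Phi_l(B_0)=B_0$, that is $\tilde E_l B_0=B_0 E_l$ for all $l$, which is exactly relation~\eqref{om3}. Lemma~\ref{combA} guarantees in addition that $\sum_l\Phi_l$ acts as $r\cdot\id$ on the image of $P$, an identity I would use to keep track of signs; combining this with the fact that the two families induce equivalent Clifford-module structures on $U$ (so the intertwiner space, which is precisely the fixed space of all $\Phi_l$, is nonzero) rules out the total cancellation $P(X_0)=0$. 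Hence a nonzero $B_0$ already intertwines the two families.

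The crux, and the main obstacle, is to arrange that $B_0$ (after normalization) be a signed permutation of the basis rather than merely an invertible intertwiner. I would take $X_0=e_a e_b^{\,T}$ a single matrix unit, so that each summand $\tilde E_S\,X_0\,E_S^{-1}=(\tilde E_S e_a)(E_S e_b)^T$ is a signed matrix unit. The group $\langle E_1,\dots,E_r\rangle$ is a monomial (signed permutation) representation of the finite two-group defined by the relations of Theorem~\ref{Eckmann}, and likewise for $\langle\tilde E_1,\dots,\tilde E_r\rangle$. The real content is then to match the orbits of these two actions on $\{\pm e_1,\dots,\pm e_{2k}\}$ and to choose the seeds $e_a,e_b$ orbit by orbit so that the $2^r$ signed matrix units add up, without cancellation, to one signed permutation block on each orbit; the fixed-point relation $\Phi_l(B_0)=B_0$ supplied by Lemma~\ref{combA} is exactly what forces the signs to be globally consistent under the relations, i.e.\ on the coincidences $E_S e_b=\pm E_{S'}e_b$, and this sign bookkeeping is the delicate part. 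Assembling the blocks over the equally many isomorphic orbits of the two families yields the desired $B$, orthogonal with entries $0,\pm1$ and satisfying $\tilde E_l=BE_lB^{-1}$.
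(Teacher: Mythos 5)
Your algebraic setup is correct as far as it goes, and it is in fact the paper's own proof stripped of vectorization: the maps $\Phi_l(X)=\tilde E_l X E_l^{-1}$ are commuting involutions on $\Endo(U)$ whose common fixed space is the intertwiner space $\{B:\tilde E_lB=BE_l \text{ for all } l\}$, and under $X\mapsto\text{vec}(X)$ your $\Phi_l$ becomes exactly the paper's Kronecker matrix $A_l=E_l\otimes\tilde E_l$, while your $P=\prod_l(1+\Phi_l)$ becomes the product $\prod_l(I+A_l)$ that the paper feeds into Lemma~\ref{combA}. The fatal gap is the sentence in which you rule out $P(X_0)=0$: you appeal to ``the fact that the two families induce equivalent Clifford-module structures on $U$'', i.e.\ to the nonvanishing of the intertwiner space. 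That is not an available fact; it is equivalent to the statement being proved, so the argument is circular --- and under the stated hypotheses it is actually false. Take $2k=4$, $r=\rho(4)-1=3$, and let $E_1,E_2,E_3$ be the matrices built from $\Omega_1$ in \eqref{s1}; a direct computation gives $E_1E_2=E_3$, hence $E_1E_2E_3=-I$. Now set $\tilde E_1=E_1$, $\tilde E_2=E_2$, $\tilde E_3=-E_3$. This is again an HR family of orthogonal matrices with entries $0,\pm1$, but $\tilde E_1\tilde E_2\tilde E_3=+I$. If $B$ satisfies $\tilde E_lB=BE_l$ for $l=1,2,3$, then $B=\tilde E_1\tilde E_2\tilde E_3B=BE_1E_2E_3=-B$, so $B=0$: the intertwiner space is $\{0\}$, your operator $P$ is identically zero, and no choice of seed $X_0$, orbit matching, or sign bookkeeping can produce the asserted $B$. (Representation-theoretically: for $r\equiv 3\pmod 4$ the relevant Clifford algebra has two inequivalent irreducible modules of the same dimension, distinguished by the sign of the central element $E_1\cdots E_r$, and an HR family of size $r$ can realize either sign.)

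For the comparison you were asked about: the paper's proof founders on the same rock. From $\mathcal F\prod_j(I+A_j)=0$ it infers $\det\mathcal F=0$, which requires $\prod_j(I+A_j)\neq 0$; but that product is, up to the factor $2^r$, the projection onto the intertwiner space, so its nonvanishing is once more the claim to be proved (the paper's remark that $I+A_1+\dots+A_r$ has ones on its diagonal concerns a different matrix and does not help). So your proposal mirrors the paper's strategy, gap included; your formulation merely makes the circularity easier to see. A repaired statement must either add a hypothesis --- e.g.\ equality of the volume elements $E_1\cdots E_r=\tilde E_1\cdots\tilde E_r$ when $r\equiv 3\pmod 4$, which forces equivalence of the two module structures --- or weaken the conclusion so that $B$ carries $\{E_l\}$ onto $\{\tilde E_l\}$ only up to signs and a relabeling of indices, which is what the paper's explicit isomorphisms in the $k=2$ example actually achieve. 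Separately, even in cases where a nonzero intertwiner does exist, your final upgrade from $P(X_0)\neq 0$ to a signed permutation is a plan rather than a proof; the paper at least extracts integrality and orthogonality from the explicit shape of the linear system (each row of $\mathcal E$ has exactly two nonzero entries $\pm1$, so the equations read $b_{ij}\pm b_{ab}=0$), and your orbit-by-orbit cancellation analysis would need to be carried out in full to reach even that level of rigor.
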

\begin{proof} If $B$ is a signed  permutation of the standard basis of $\mathbb R^{2k}$, then $B^T=B^{-1}$ and $|\det B|=1$. Let $\tilde{E}_l=BE_lB^{-1}$.  We check the following properties
of an HR family of matrices.
\begin{itemize}
\item Obviously $\tilde{E}_l$ has entries $0,\pm 1$;
\item $\tilde{E}_l^{-1}=(BE_lB^{-1})^{-1}=BE_1^{-1}B^{-1}=BE_1^TB^{T}=(BE_lB^{T})^{T}=(BE_lB^{-1})^{T}=\tilde{E}_l^{T}$;
\item $\tilde{E}_l^{2}=(BE_lB^{-1})^{2}=BE_lB^{-1}BE_lB^{-1}=BE_l^2B^{-1}=-I$;
\item $\tilde{E}_i\tilde{E}_j=BE_iB^{-1}BE_jB^{-1}=BE_iE_jB^{-1}=-BE_jE_iB^{-1}=-\tilde{E}_j\tilde{E}_i$.
\end{itemize}
Therefore, the family of matrices $\{\tilde{E}_l\}_{l=1}^{\rho(2k)-1}$ is an HR-family for a fixed $B$. 

Reciprocally, given two HR-families $\{{E}_l\}_{l=1}^{\rho(2k)-1}$ and $\{\tilde{E}_l\}_{l=1}^{\rho(2k)-1}$,
we have to find a solution $B$ to the overdetermined system of equations  $\tilde{E}_l=BE_lB^{-1}$ for all $l=1,2,\dots ,r$.

Let us use the Kronecker matrix product  identity
\[
\text{vec}(ABC)=(C^T\otimes A)\text{vec}(B),
\]
where $\otimes$ stands for the {\it Kronecker matrix product} and vec$(V)$ for the {\it vectorization} of a matrix $V$, a linear transformation which converts the matrix into a column vector. 
The Kronecker matrix product possesses several nice properties, some necessary of them we list below.
\begin{itemize}
\item Bilinearity and associativity: 
\begin{itemize}
\item $A\otimes (B+C)=A\otimes B+A\otimes C$,
\item $(B+C)\otimes A= B\otimes A+C\otimes A$,
\item $(kA)\otimes B=A\otimes (kB)$, $k$ is a scalar,
\item $A\otimes (B\otimes C)=(A\otimes B)\otimes C$;
\end{itemize}
\item Transposition $(A\otimes B)^T=A^T\otimes B^T$;
\item For $r\times r$ matrices the determinant is $\det(A\otimes B)=(\det A)^r(\det B)^r$;
\item The mixed-product property $(A\otimes B)(C\otimes D)=AC\otimes BD$.
\end{itemize}

Observe that
$$
\tilde{E}_l=BE_l B^{-1}\,\Leftrightarrow \, \tilde{E}_lB=BE_l\,\Leftrightarrow\, \tilde{E}_lBI=IBE_l.
$$  
So the equation $\tilde{E}_lB=BE_l$ is transformed to the equation
\begin{equation}\label{Kr_eq}
(E_l^T\otimes I-I\otimes  \tilde{E}_l)\text{vec}(B)=0,
\end{equation}
which is a system of $(2k)^2$ equations for each $l$.

Let us analize the matrix of our system
\[
\mathcal{E}=\left(
\begin{array}{c}
E_1^T\otimes I-I\otimes  \tilde{E}_1\\
E_2^T\otimes I-I\otimes  \tilde{E}_2\\
\dots\\
E_r^T\otimes I-I\otimes  \tilde{E}_r
\end{array}\right).
\] 
In order to compute the rank of $\mathcal E$, we use the equality $\rank \mathcal E=\rank \mathcal E^T\mathcal E$, and calculate
\[
\mathcal F:=\mathcal E^T\mathcal E=\left( \left(E_1\otimes I-I\otimes  \tilde{E}_1^T,\dots, E_r\otimes I-I\otimes  \tilde{E}_r^T\right)\cdot \left(
\begin{array}{c}
E_1^T\otimes I-I\otimes  \tilde{E}_1\\
E_2^T\otimes I-I\otimes  \tilde{E}_2\\
\dots\\
E_r^T\otimes I-I\otimes  \tilde{E}_r
\end{array}\right)\right)
\]
\[
=(E_1\otimes I)(E_1^T\otimes I)+(I\otimes \tilde{E}_1^T)(I\otimes \tilde{E}_1)-(E_1\otimes I)(I\otimes \tilde{E}_1)-(I\otimes \tilde{E}_1^T)(E_1^T\otimes I)+\dots
\]
\[
+(E_r\otimes I)(E_r^T\otimes I)+(I\otimes \tilde{E}_r^T)(I\otimes \tilde{E}_r)-(E_r\otimes I)(I\otimes \tilde{E}_r)-(I\otimes \tilde{E}_r^T)(E_r^T\otimes I)=
\]
\[
=2\sum\limits_{j=1}^r(I-(E_j\otimes \tilde{E}_j))=2\sum\limits_{j=1}^r((I\otimes I)-(E_j\otimes \tilde{E}_j)),
\]
where we distinguish the dimension of the matrix $I$ in $\mathcal F$ and in its blocks. We use that $(E_i\otimes \tilde{E}_i)(E_j\otimes \tilde{E}_j)=(E_iE_j)\otimes (\tilde{E}_i\tilde{E}_j)=(E_jE_i)\otimes (\tilde{E}_j\tilde{E}_i)$. Notice that  if we denote $A_i=E_i\otimes \tilde{E}_i$, then
$A_i^2=I$, $A_iA_j=A_jA_i$, and they are elements of a commutative ring and satisfy
 Lemma~\ref{combA}. So we write
\[
\mathcal F(I+A_1+\dots +A_r)\equiv 2(rI-A_1-\dots -A_r)\prod\limits_{i=1}^r(I+A_j)=0.
\]
The matrix $I+A_1+\dots +A_r$ is non-vanishing (it has all 1 in the principal diagonal), therefore, $\det \mathcal F=0$, and the rank
of the matrix $\mathcal E$ is smaller than $(2k)^2$. Thus, a solution $B$ exists. We also observe that the matrix $\mathcal{E}$ in the left hand side of equation~\eqref{Kr_eq} contains only two non-zero entries in each row and column and these entries equal $\pm 1$. Thus the system is reduces to the form $b_{ij}\pm b_{ab}=0$ and the orthogonality $B^T=B^{-1}$ follows from the form the equation. Therefore, choosing free variables $b_{ij}$ equal $0$, $\pm 1$, we always can find a solution $B$ consisting of $0$, $\pm 1$.
\end{proof}

We visualise this method in our example. Denote by $E_l^{\Omega_j}$ the matrix which corresponds to the vector $\omega_l^{\Omega_j}$.
Then we choose, for instance, $\tilde{E}_1=E_1^{\Omega_3}=BE_1^{\Omega_1}B^{-1}$ and write equation~\eqref{Kr_eq}

{\tiny
\begin{equation}\label{eq_b}
\renewcommand{\arraystretch}{1.4}
\left(\begin{array}{rrrr|rrrr|rrrr|rrrr}
0&1&0&0&  1&0&0&0& 0&0&0&0& 0&0&0&0 \\
-1&0&0&0&  0&1&0&0& 0&0&0&0& 0&0&0&0 \\
0&0&0&1&  0&0&1&0& 0&0&0&0& 0&0&0&0 \\
0&0&-1&0&  0&0&0&1& 0&0&0&0& 0&0&0&0 \\
 \hline
-1&0&0&0&  0&1&0&0& 0&0&0&0& 0&0&0&0 \\
0&-1&0&0&  -1&0&0&0& 0&0&0&0& 0&0&0&0 \\
0&0&-1&0&  0&0&0&1& 0&0&0&0& 0&0&0&0 \\
0&0&0&-1&  0&0&-1&0& 0&0&0&0& 0&0&0&0 \\
 \hline
0&0&0&0&  0&0&0&0& 0&1&0&0& -1&0&0&0 \\
0&0&0&0&  0&0&0&0& -1&0&0&0& 0&-1&0&0 \\
0&0&0&0&  0&0&0&0& 0&0&0&1& 0&0&-1&0 \\
0&0&0&0&  0&0&0&0& 0&0&-1&0& 0&0&0&-1 \\
 \hline
0&0&0&0&  0&0&0&0& 1&0&0&0& 0&1&0&0 \\
0&0&0&0&  0&0&0&0& 0&1&0&0& -1&0&0&0 \\
0&0&0&0&  0&0&0&0& 0&0&1&0& 0&0&0&1 \\
0&0&0&0&  0&0&0&0& 0&0&0&1& 0&0&-1&0 
  \end{array}\right) 
 \cdot \left(\begin{array}{c}
b_{11}\\
b_{12}\\
b_{13}\\
b_ {14}\\
  \hline
 b_{21}\\
b_{22}\\
b_{23}\\
b_ {24}\\
\hline
b_{31}\\
b_{32}\\
b_{33}\\
b_ {34}\\
  \hline
  b_{41}\\
b_{42}\\
b_{43}\\
b_ {44}\\
 \end{array}\right)=0.
\end{equation}
}
The matrix $B$ which is solution of~\eqref{eq_b} is the following
{\tiny
\begin{equation*}
\renewcommand{\arraystretch}{1.4}
\left(\begin{array}{rrrr|rrrr|rrrr|rrrr}
0&1&0&0&  1&0&0&0& 0&0&0&0& 0&0&0&0 \\
-1&0&0&0&  0&1&0&0& 0&0&0&0& 0&0&0&0 \\
0&0&0&1&  0&0&1&0& 0&0&0&0& 0&0&0&0 \\
0&0&-1&0&  0&0&0&1& 0&0&0&0& 0&0&0&0 \\
 \hline
-1&0&0&0&  0&1&0&0& 0&0&0&0& 0&0&0&0 \\
0&-1&0&0&  -1&0&0&0& 0&0&0&0& 0&0&0&0 \\
0&0&-1&0&  0&0&0&1& 0&0&0&0& 0&0&0&0 \\
0&0&0&-1&  0&0&-1&0& 0&0&0&0& 0&0&0&0 \\
 \hline
0&0&0&0&  0&0&0&0& 0&1&0&0& -1&0&0&0 \\
0&0&0&0&  0&0&0&0& -1&0&0&0& 0&-1&0&0 \\
0&0&0&0&  0&0&0&0& 0&0&0&1& 0&0&-1&0 \\
0&0&0&0&  0&0&0&0& 0&0&-1&0& 0&0&0&-1 \\
 \hline
0&0&0&0&  0&0&0&0& 1&0&0&0& 0&1&0&0 \\
0&0&0&0&  0&0&0&0& 0&1&0&0& -1&0&0&0 \\
0&0&0&0&  0&0&0&0& 0&0&1&0& 0&0&0&1 \\
0&0&0&0&  0&0&0&0& 0&0&0&1& 0&0&-1&0 
  \end{array}\right) 
 \cdot \left(\begin{array}{c}
0\\
1\\
0\\
0\\
  \hline
 -1\\
0\\
0\\
0\\
\hline
0\\
0\\
0\\
1\\
  \hline
 0\\
0\\
1\\
0\\
 \end{array}\right)=0.
\end{equation*}
}
Check 
$E_2^{\Omega_3}=BE_2^{\Omega_1}B^{-1}$
\[
\renewcommand{\arraystretch}{1.4}
\left(\begin{array}{rrrr}
0&0&1&0   \\
0&0&0&-1   \\
-1&0&0&0   \\
0&1&0&0  
  \end{array}\right) =\left(\begin{array}{rrrr}
0&-1&0&0   \\
1&0&0&0   \\
0&0&0&1   \\
0&0&1&0  
  \end{array}\right)\left(\begin{array}{rrrr}
0&0&-1&0   \\
0&0&0&-1   \\
1&0&0&0   \\
0&1&0&0  
  \end{array}\right)\left(\begin{array}{rrrr}
0&1&0&0   \\
-1&0&0&0   \\
0&0&0&1   \\
0&0&1&0  
  \end{array}\right),
\]
and
$E_3^{\Omega_3}=BE_3^{\Omega_1}B^{-1}$
\[
\renewcommand{\arraystretch}{1.4}
\left(\begin{array}{rrrr}
0&0&0&1   \\
0&0&1&0   \\
0&-1&0&0   \\
-1&0&0&0  
  \end{array}\right) =\left(\begin{array}{rrrr}
0&-1&0&0   \\
1&0&0&0   \\
0&0&0&1   \\
0&0&1&0  
  \end{array}\right)\left(\begin{array}{rrrr}
0&0&0&1   \\
0&0&-1&0   \\
0&1&0&0   \\
-1&0&0&0 
  \end{array}\right)\left(\begin{array}{rrrr}
0&1&0&0   \\
-1&0&0&0   \\
0&0&0&1   \\
0&0&1&0  
  \end{array}\right).
\]
We see that the matrix
\[
B_1=\left(\begin{array}{rrrr}
1&0&0&0   \\
0&1&0&0   \\
0&0&0&1   \\
0&0&1&0  
  \end{array}\right)
\] 
gives also a solution to the equation $E_1^{\Omega_2}=B_1E_1^{\Omega_1}B_1^{-1}$ which is the same as \eqref{eq_b} because $\omega_1^{\Omega_2}=\omega_1^{\Omega_3}$:
{\tiny
\begin{equation*}
\renewcommand{\arraystretch}{1.4}
\left(\begin{array}{rrrr|rrrr|rrrr|rrrr}
0&1&0&0&  1&0&0&0& 0&0&0&0& 0&0&0&0 \\
-1&0&0&0&  0&1&0&0& 0&0&0&0& 0&0&0&0 \\
0&0&0&1&  0&0&1&0& 0&0&0&0& 0&0&0&0 \\
0&0&-1&0&  0&0&0&1& 0&0&0&0& 0&0&0&0 \\
 \hline
-1&0&0&0&  0&1&0&0& 0&0&0&0& 0&0&0&0 \\
0&-1&0&0&  -1&0&0&0& 0&0&0&0& 0&0&0&0 \\
0&0&-1&0&  0&0&0&1& 0&0&0&0& 0&0&0&0 \\
0&0&0&-1&  0&0&-1&0& 0&0&0&0& 0&0&0&0 \\
 \hline
0&0&0&0&  0&0&0&0& 0&1&0&0& -1&0&0&0 \\
0&0&0&0&  0&0&0&0& -1&0&0&0& 0&-1&0&0 \\
0&0&0&0&  0&0&0&0& 0&0&0&1& 0&0&-1&0 \\
0&0&0&0&  0&0&0&0& 0&0&-1&0& 0&0&0&-1 \\
 \hline
0&0&0&0&  0&0&0&0& 1&0&0&0& 0&1&0&0 \\
0&0&0&0&  0&0&0&0& 0&1&0&0& -1&0&0&0 \\
0&0&0&0&  0&0&0&0& 0&0&1&0& 0&0&0&1 \\
0&0&0&0&  0&0&0&0& 0&0&0&1& 0&0&-1&0 
  \end{array}\right) 
 \cdot \left(\begin{array}{c}
1\\
0\\
0\\
0\\
  \hline
0\\
1\\
0\\
0\\
\hline
0\\
0\\
0\\
1\\
  \hline
 0\\
0\\
1\\
0\\
 \end{array}\right)=0
\end{equation*}
}
Check 
$E_3^{\Omega_2}=B_1E_2^{\Omega_1}B_1^{-1}$
\[
\renewcommand{\arraystretch}{1.4}
\left(\begin{array}{rrrr}
0&0&0&-1   \\
0&0&-1&0   \\
0&1&0&0   \\
1&0&0&0  
  \end{array}\right) =\left(\begin{array}{rrrr}
1&0&0&0   \\
0&1&0&0   \\
0&0&0&1   \\
0&0&1&0  
  \end{array}\right)\left(\begin{array}{rrrr}
0&0&-1&0   \\
0&0&0&-1   \\
1&0&0&0   \\
0&1&0&0  
  \end{array}\right)\left(\begin{array}{rrrr}
1&0&0&0   \\
0&1&0&0   \\
0&0&0&1   \\
0&0&1&0  
  \end{array}\right),
\]
and
$E_2^{\Omega_3}=B_1E_3^{\Omega_1}B_1^{-1}$
\[
\renewcommand{\arraystretch}{1.4}
\left(\begin{array}{rrrr}
0&0&1&0   \\
0&0&0&-1   \\
-1&0&0&0   \\
0&1&0&0  
  \end{array}\right) =\left(\begin{array}{rrrr}
1&0&0&0   \\
0&1&0&0   \\
0&0&0&1   \\
0&0&1&0  
  \end{array}\right)\left(\begin{array}{rrrr}
0&0&-1&0   \\
0&0&0&-1   \\
1&0&0&0   \\
0&1&0&0  
  \end{array}\right)\left(\begin{array}{rrrr}
1&0&0&0   \\
0&1&0&0   \\
0&0&0&1   \\
0&0&1&0  
  \end{array}\right).
\]
We see that different basis permutation matrices  $B$ and $B_1$ which are the solutions to \eqref{eq_b} give isomorphisms of $H$-type algebras.

\subsection{Non-degenerate metric}

In this section we study the case when the metric $(\cdot\,,\cdot)_{U}$ is neutral,  $(\cdot\, ,\cdot)_{Z}$ is non-positive, and both metrics are non-degenerate, see Proposition~\ref{pr4}. 

\subsubsection{Construction of pseudo $H$-type algebras} 
We need to prove an analogue of Theorem~\ref{t6}, and we need auxiliary technical lemmas. Let us start reminding that for the inner (positive definite) product, if  $\omega_{\ell}=\dots+\alpha^{\ell}_{ij}(e_i\times e_j)+\dots$ for some  $\mathfrak{p}_{\ell}$, where $i<j$, then $J_{\omega_{\ell}}e_i=\alpha^{\ell}_{ij} e_j$, where $\alpha^{\ell}_{ij}$ is (+1) or (-1). In the case of a non-degenerate scalar product the situation is more complicated, and we describe it in the following proposition.    For definiteness, we assume that $(e_i, e_i)_{U}=1/k$ for $i=1,2,\dots,k$ and $(e_i, e_i)_{U}=-1/k$ for $i=k+1,k+2,\dots,2k$.

\begin{proposition}\label{p6} Let $\omega_{\ell}=\sum\limits_{(i,j)\in \mathfrak{p}_{\ell}}\alpha^{\ell}_{ij}(e_i\times e_j)$, $\mathfrak{p}_{\ell}\in\mathfrak{p}'$, $\alpha^{\ell}_{ij}=\pm 1$.
Denote by \linebreak $\beta:=k\alpha_{ij}^{\ell}(e_i\times e_j,e_i\times e_j)_{Z}$ for an arbitrary chosen pair $(i,j)\in \mathfrak{p}_{\ell}$. Then
\begin{itemize}
\item If  $1 \leq i<j\leq k$, then $J_{\omega_{\ell}}e_i=\beta e_j$, and $J_{\omega_{\ell}}e_j=-\beta e_i$.
\item  If  $k+1 \leq i<j\leq 2k$, then $J_{\omega_{\ell}}e_i=-\beta e_j$, and $J_{\omega_{\ell}}e_j=\beta e_i$.
\item  If $1 \leq i\leq k$ and  $k+1 \leq j\leq 2k$, then $J_{\omega_{\ell}}e_i=-\beta e_j$, and $J_{\omega_{\ell}}e_j=-\beta e_i$.
\end{itemize}
\end{proposition}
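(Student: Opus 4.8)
The plan is to compute the action of $J_{\omega_\ell}$ directly from its defining relation \eqref{eq:J} together with the orthogonality and non-nullness of the chosen basis of $Z$. First I would fix the pair $(i,j)\in\mathfrak{p}_\ell$ with $i<j$ and evaluate $(J_{\omega_\ell}e_i, e_m)_U = (\omega_\ell, e_i\times e_m)_Z$ for every $m$. Since $\mathfrak{p}_\ell$ partitions $\{1,\dots,2k\}$, the index $i$ belongs to exactly one pair, namely $(i,j)$; hence for $m\neq j$ the element $e_i\times e_m$ is (up to sign) a basis vector of $Z$ distinct from all summands of $\omega_\ell$, so by orthogonality of the basis of $Z$ the pairing vanishes. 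Only $m=j$ survives, giving $(J_{\omega_\ell}e_i, e_j)_U = \alpha^\ell_{ij}(e_i\times e_j, e_i\times e_j)_Z = \beta/k$.

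Because the $e_m$ are mutually orthogonal and non-null, a vector orthogonal to every $e_m$ with $m\neq j$ must be a scalar multiple of $e_j$; thus $J_{\omega_\ell}e_i = c\,e_j$, and I recover $c$ from $(J_{\omega_\ell}e_i, e_j)_U = c\,(e_j,e_j)_U = c\,\varepsilon_j/k$, where $\varepsilon_j=+1$ for $j\leq k$ and $\varepsilon_j=-1$ for $j>k$. Since $1/\varepsilon_j=\varepsilon_j$, this yields $c=\varepsilon_j\beta$, that is, $J_{\omega_\ell}e_i = \varepsilon_j\beta\,e_j$. The computation for $J_{\omega_\ell}e_j$ is identical, except that the relevant product $e_j\times e_i = -(e_i\times e_j)$ acquires a minus sign from the antisymmetry of the bracket; this gives $(J_{\omega_\ell}e_j,e_i)_U = -\beta/k$ and hence $J_{\omega_\ell}e_j = -\varepsilon_i\beta\,e_i$.

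Finally I would read off the three cases by substituting the signs $\varepsilon_i,\varepsilon_j$. When $1\leq i<j\leq k$ both signs are $+1$, giving $J_{\omega_\ell}e_i=\beta e_j$ and $J_{\omega_\ell}e_j=-\beta e_i$; when $k+1\leq i<j\leq 2k$ both are $-1$, giving $J_{\omega_\ell}e_i=-\beta e_j$ and $J_{\omega_\ell}e_j=\beta e_i$; and in the mixed case $\varepsilon_i=+1$, $\varepsilon_j=-1$ yield $J_{\omega_\ell}e_i=-\beta e_j$ and $J_{\omega_\ell}e_j=-\beta e_i$, exactly matching the three bullet points of the statement.

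I do not expect a genuine obstacle here: the argument is a direct unwinding of \eqref{eq:J} combined with the orthogonality of the basis of $Z$. The only place demanding care is the bookkeeping of the two independent sign sources, namely the signature factors $\varepsilon_i,\varepsilon_j$ coming from $(e_i,e_i)_U$ and $(e_j,e_j)_U$, and the antisymmetry sign from $e_j\times e_i=-(e_i\times e_j)$; these must be combined correctly to avoid spurious sign errors, and it is precisely their interplay that distinguishes the neutral case from the positive-definite one treated earlier.
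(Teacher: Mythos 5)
Your proposal is correct and follows essentially the same route as the paper's proof: evaluate $(J_{\omega_\ell}e_i,e_m)_U=(\omega_\ell,e_i\times e_m)_Z$ from \eqref{eq:J}, use that $i$ occurs in only one pair of $\mathfrak{p}_\ell$ together with orthogonality of the basis of $Z$ to kill all terms except $m=j$, and then identify $J_{\omega_\ell}e_i$ (and $J_{\omega_\ell}e_j$) via non-degeneracy of the orthogonal non-null basis of $U$. The only differences are cosmetic: you package the three cases into a single formula $J_{\omega_\ell}e_i=\varepsilon_j\beta e_j$, $J_{\omega_\ell}e_j=-\varepsilon_i\beta e_i$, and you invoke the antisymmetry $e_j\times e_i=-(e_i\times e_j)$ where the paper invokes the skew-symmetry of $J_{\omega_\ell}$, which is the same fact in disguise.
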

\begin{proof}
Recall that $J_{\omega_{\ell}}(e_i)=\pm e_j$, $(i,j)\in \mathfrak{p}_{\ell}$. 
Let $1 \leq i<j\leq k$.  
Then using the definition of the operator $J_{\omega_{\ell}}$ we obtain
\begin{equation}\label{from}
(J_{\omega_{\ell}}e_i, e_j)_{U}=(\omega_{\ell}, e_i\times e_j)_{Z}=\alpha_{ij}^{\ell}(e_i\times e_j,e_i\times e_j)_{Z}=\beta\frac{1}{k}=\beta(e_j, e_j)_{U}.
\end{equation}
The integer $i$ appears in pairs of $\mathfrak{p}_{\ell}$ only in the pair $(i,j)$, therefore the equality $$(J_{\omega_{\ell}}e_i, e_m)_{U}=\beta(e_j, e_m)_{U}=0$$ trivially holds for all $m=1,\dots, 2k$, $m\neq j,$  hence $J_{\omega_{\ell}}e_i=\beta e_j$ as in the formulation of Proposition~\ref{p6}.
At the same time, $(e_j, e_j)_{U}=(e_i, e_i)_{U}$, and by skew symmetry of $J_{\omega_{\ell}}$ we have $(J_{\omega_{\ell}}e_i, e_j)_{U}=-(J_{\omega_{\ell}}e_j, e_i)_{U}$. So 
$
(J_{\omega_{\ell}}e_j, e_i)_{U}=-\beta(e_i, e_i)_{U},
$
from \eqref{from},
which implies $J_{\omega_{\ell}}e_j=-\beta e_i$.
Analogously we argue for $k+1 \leq i,j\leq 2k$. 

For $1 \leq i\leq k$ and  $k+1 \leq j\leq 2k$, arguing as in~\eqref{from}, we obtain
$
(J_{\omega_{\ell}}e_i, e_j)_{U}=-\beta(e_j, e_j)_{U}$.
Since $(e_j, e_j)_{U}=-(e_i, e_i)_{U}$, and 
$
(J_{\omega_{\ell}}e_j, e_i)_{U}=-\beta(e_i, e_i)_{U},
$ by skew symmetry of $J_{\omega_{\ell}}$, we proof the third statement of Proposition~\ref{p6}.
\end{proof}
Both, $\beta$ and $\alpha_{ij}^{\ell}$, admit values in $\{-1,1\}$, observe that $|(e_i\times e_j,e_i\times e_j)_{\mathfrak{z}}|=\frac{1}{k}$, and the choice of the sign of $\beta$ depends both on $\alpha_{ij}^{\ell}$ and on the signature of $(\cdot\, , \cdot)_{Z}$.

Assume that the family of 1-factors $\mathfrak{p}''\subseteq \mathfrak{p}'$ satisfies the property that any two factors set together form the union of disjoint cycles. 

\begin{proposition}\label{p7} If $\mathfrak{p}_{\ell}\in \mathfrak{p}'$ defines $\omega_{\ell}$ satisfying the condition $J^2_{\omega_{\ell}}=-(\omega_{\ell},\omega_{\ell})_{Z} \id_U$, then
 we have two options. The pairs $(i,j)\in \mathfrak{p}_{\ell}$ are of the following types:
\begin{itemize}
\item either $1 \leq i,j\leq k$ or $k+1 \leq i,j\leq 2k$ for all $(i,j)\in \mathfrak{p}_{\ell}$;
\item $1 \leq i\leq k$ and $k+1 \leq j\leq 2k$  for all $(i,j)\in \mathfrak{p}_{\ell}$.
\end{itemize}
\end{proposition}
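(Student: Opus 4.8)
The plan is to read the action of $J_{\omega_{\ell}}$ on the basis of $U$ directly from Proposition~\ref{p6}, compute the diagonal operator $J^2_{\omega_{\ell}}$ case by case, and then exploit the fact that the hypothesis forces $J^2_{\omega_{\ell}}$ to be a scalar multiple of $\id_U$. First I would record that $\mathfrak{p}_{\ell}$ is a 1-factor of $K_{2k}$, so each index $i\in\{1,\dots,2k\}$ occurs in exactly one pair of $\mathfrak{p}_{\ell}$. Hence, for a fixed pair $(i,j)\in\mathfrak{p}_{\ell}$, the operator $J_{\omega_{\ell}}$ preserves the two-dimensional subspace $\spn\{e_i,e_j\}$, and $J^2_{\omega_{\ell}}$ is block diagonal with $2\times 2$ blocks indexed by the pairs. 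Since $\alpha^{\ell}_{ij}=\pm 1$ and $|(e_i\times e_j,e_i\times e_j)_{Z}|=1/k$, the constant $\beta=k\alpha^{\ell}_{ij}(e_i\times e_j,e_i\times e_j)_{Z}$ of Proposition~\ref{p6} satisfies $\beta^2=1$.

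Next I would compute the restriction of $J^2_{\omega_{\ell}}$ to each $\spn\{e_i,e_j\}$ using the three formulas of Proposition~\ref{p6}. For a within-block pair, that is $1\leq i<j\leq k$ or $k+1\leq i<j\leq 2k$, the relations $J_{\omega_{\ell}}e_i=\pm\beta e_j$ and $J_{\omega_{\ell}}e_j=\mp\beta e_i$ give $J^2_{\omega_{\ell}}e_i=-\beta^2 e_i=-e_i$ and likewise $J^2_{\omega_{\ell}}e_j=-e_j$. For a cross-block pair, that is $1\leq i\leq k$ and $k+1\leq j\leq 2k$, the relations $J_{\omega_{\ell}}e_i=-\beta e_j$ and $J_{\omega_{\ell}}e_j=-\beta e_i$ give $J^2_{\omega_{\ell}}e_i=\beta^2 e_i=e_i$ and $J^2_{\omega_{\ell}}e_j=e_j$. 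Thus every $e_i$ is an eigenvector of $J^2_{\omega_{\ell}}$, with eigenvalue $-1$ precisely when the pair containing $i$ lies within a single block, and eigenvalue $+1$ precisely when that pair is a cross-block pair.

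Finally I would invoke the hypothesis $J^2_{\omega_{\ell}}=-(\omega_{\ell},\omega_{\ell})_{Z}\id_U$, whose right-hand side is a scalar operator; hence $J^2_{\omega_{\ell}}$ itself acts as a single scalar. Comparing with the eigenvalue computation, all the eigenvalues $\pm 1$ must coincide with this one scalar. Therefore either every $e_i$ has eigenvalue $-1$, forcing every pair of $\mathfrak{p}_{\ell}$ to lie within a block (the first option, and then $J^2_{\omega_{\ell}}=-\id_U$, so $(\omega_{\ell},\omega_{\ell})_{Z}=1$), or every $e_i$ has eigenvalue $+1$, forcing every pair to be cross-block (the second option, and then $J^2_{\omega_{\ell}}=\id_U$, so $(\omega_{\ell},\omega_{\ell})_{Z}=-1$). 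A partition mixing the two types would produce both eigenvalues $-1$ and $+1$, making $J^2_{\omega_{\ell}}$ non-scalar and contradicting the hypothesis. Because the argument is a direct diagonal computation, I do not expect a genuine obstacle; the only point demanding care is the bookkeeping of signs across the three cases of Proposition~\ref{p6} and the verification that the two within-block subcases both yield eigenvalue $-1$.
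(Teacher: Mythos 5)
Your proof is correct and takes essentially the same route as the paper: both rely on Proposition~\ref{p6} together with $\beta^2=1$ to show that within-block pairs force $J^2_{\omega_{\ell}}e_i=-e_i$ while cross-block pairs force $J^2_{\omega_{\ell}}e_i=+e_i$, so the hypothesis that $J^2_{\omega_{\ell}}$ is the scalar $-(\omega_{\ell},\omega_{\ell})_{Z}\id_U$ excludes any mixture. The only difference is presentational — you phrase it as an eigenvalue dichotomy for a scalar operator, whereas the paper derives the explicit contradiction $(\omega_{\ell},\omega_{\ell})_{Z}=1$ and $(\omega_{\ell},\omega_{\ell})_{Z}=-1$ from two mixed pairs.
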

\begin{proof}
Indeed, assume that for  $\mathfrak{p}_{\ell}$, there exist two pairs  $(i_1,j_1)\in \mathfrak{p}_{\ell}$ and $(i_2,j_2)\in \mathfrak{p}_{\ell}$, such that
$1 \leq i_1,j_1\leq k$ but $1 \leq i_2\leq k$ and $k+1 \leq j_2\leq 2k$. Specify indices for $\beta$ from Proposition~\ref{p6} as $\beta_{{ij}}:=k\alpha_{ij}^{\ell}(e_i\times e_j,e_i\times e_j)_{Z}$. Then
\[
J_{\omega_{\ell}}e_{i_1}=\beta_{i_1j_1} e_{j_1}, \quad J^2_{\omega_{\ell}}e_{i_1}=-\beta_{i_1j_1}^2e_{i_1}=-e_{i_1},
\]
by Proposition~\ref{p6}, and necessarily  $(\omega_{\ell},\omega_{\ell})_{\mathfrak{z}}=1$. But 
\[
J_{\omega_{\ell}}e_{i_2}=-\beta_{i_2j_2} e_{j_2}, \quad J^2_{\omega_{\ell}}e_{i_2}=\beta_{i_2j_2}^2e_{i_2}=e_{i_1},
\]
and $(\omega_{\ell},\omega_{\ell})_{\mathfrak{z}}=-1$ at the same time. This contradiction brings us to the  statement. The same proof holds for $k+1 \leq i_1,j_1\leq 2k$.
\end{proof}

This, for example, means that the partition  $\mathfrak{p}_{\ell}=\{(1,2), (3,4), (5,6), (7,8)\}$ satisfies Proposition~\ref{p7}  but $\mathfrak{p}_{\ell}=\{(1,2), (3,7), (4,8), (5,6)\}$ for $k=4$ does not. f Proposition~\ref{p7} yields that the operators $J_{\omega_{\ell}}$ have a matrix representation in the block form
 \[
E_{\ell}=\left(
\begin{array}{cc} A & B \\ 
C & D 
\end{array}
\right),
\]
where the matrices $A$, $D$ are skew-symmetric, and $B=C^T$. For example, in the case $k=2$, and the signature $(1,2)$ we have
\[
E_1=\left(
\begin{array}{rrrr} 0 & \alpha_{12} & 0 & 0 \\ 
-\alpha_{12} & 0 & 0 & 0 \\ 
0 & 0 & 0 & \alpha_{34} \\
0 & 0 &  -\alpha_{34}  & 0 \\  
\end{array}
\right),\quad E_2=\left(
\begin{array}{rrrr} 0 & 0 & \alpha_{13} & 0 \\ 
0 & 0 & 0 & \alpha_{24} \\ 
\alpha_{13} & 0 & 0 & 0 \\
0 & \alpha_{24} &  0  & 0 \\  
\end{array}
\right),
\]
\[
E_3=\left(
\begin{array}{rrrr} 0 & 0 & 0 & \alpha_{14} \\ 
0 & 0 & \alpha_{23} & 0 \\ 
0 & \alpha_{23} & 0 & 0 \\
\alpha_{14} & 0 &  0  & 0 \\  
\end{array}
\right),
\]
Proposition \ref{p7} has a clear meaning in the matrix form, i.e., all non-zero elements $\alpha_{ij}$, $i,j\in \mathfrak{p}_{\ell}$ are either in $A$ and $D$ or in $B$ and $C$ blocks
of the matrix $E_{\ell}$.

\subsubsection{Construction of partitions} 
An analogue of Theorem~\ref{Eckmann} for generalized HR family of type $(s,t)$ was proved by Wolfe~\cite{Wolfe} in 1976.
Let us define a family of real matrices HR$(s,t)$ by the following conditions
\begin{itemize}
\item $E_0=I$, $E^2_k=-I$ for $k=1,\dots, s$; $E^2_k=I$ for $k=s+1,\dots, s+t$;
\item $E_k=-E^T_k$ for $k=1,\dots, s$; $E_k=E^T_k$ for for $k=s+1,\dots, s+t$;
\item $E_kE_j+E_jE_k=0$ for $k\neq j$ and $k,j=1,\dots, s+t$.
\end{itemize}

\begin{definition}
Let $n$, $s$  be positive integers and $t\geq 0$. We define
\[
\rho_t(n)=\max\{s\colon \text{$Cl_{s-1,t}$ has an irreducible ($n\times n$) matrix representation over $\mathbb R$}\},
\]
\[
\sigma_s(n)=\max\{t\colon \text{$Cl_{s,t-1}$ has an irreducible  ($n\times n$)  matrix representation over $\mathbb R$}\}.
\]
\end{definition} 

Wolfe \cite{Wolfe} completely characterised $\rho_t(n)$ and $\sigma_s(n)$. In particular, 
\begin{itemize}
\item $\rho_1(2)=\rho_2(2)=2$, $\rho_5(8)=1$;
\item $\rho_t(2n)=\rho_{t-1}(n)+1$;
\item $\rho_t(n)=\rho_{t+8}(16 n)$;
\end{itemize}

\begin{itemize}
\item $\sigma_1(2)=3$, $\sigma_3(4)=\sigma_5(8)=\sigma_6(8)=\sigma_7(8)=1$;
\item $\sigma_s(2n)=\sigma_{s-1}(n)+1$;
\item $\sigma_s(n)=\sigma_{s+8}(16 n)$;
\end{itemize}

Both $\rho_t(n)$ and $\sigma_s(n)$ are completely characterised by the Hurwitz-Radon function $\rho(n)$ and the above relations as $\rho_0(n)=\rho(n)$ and $\sigma_0(2n)=\rho(n)+2$. 
Define the next function $\tau(n)=\max\{\rho_t(n)+t\colon t\geq 0\}$. If $n=2^r$, then $\tau(n)=2(r+1)$.
Given positive integer $n$, if there exists an HR$(s,t)$ family in order $n$, then~\cite{Wolfe},
\begin{itemize}
\item $s+t\leq\tau(n)-1$;
\item $s\leq \rho_t(n)-1$ and $t\leq \sigma_s(n)-1$.
\end{itemize}
Moreover, the limits are achievable and the matrices can be assumed with integer entries. 
The dimension of a Clifford algebra is a power of $2$ and $\rho_t(u2^{r})=\rho_t(2^{r})$ for odd $u$, hence it is sufficient to reduce the dimension of the representation space
to the closest power of $2$.
In order to come up with an analogue of Theorem~\ref{t6} we need the following result proved by Ciatti in \cite{Ciatti}. 
\begin{theorem}[\cite{Ciatti}]\label{TCiatti}
The following triplets $(s,t,r)$ guarantee that the Clifford algebra $Cl_{s,t}$ has irreducible and admissible representation over the representation space of minimal dimension $2^{r}$.
\begin{itemize}
\item $s-t\equiv 0,\, 6${\rm (mod 8)} for $r\equiv 0,\, 6${\rm (mod 4)};
\item $s-t\equiv 2,\, 4${\rm (mod 8)} for  all values of $r$;
\item $s-t\equiv 1,\, 5${\rm (mod 8)} for $r\equiv 0,\, 2,\, 3${\rm (mod 4)};
\item $s-t\equiv 3${\rm (mod 8)} for $r\equiv 0,\, 2${\rm (mod 4)};
\item $s-t\equiv 7${\rm (mod 8)} for $r\equiv 3${\rm (mod 4)}.
\end{itemize}
\end{theorem}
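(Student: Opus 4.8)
The plan is to reduce the statement to the Atiyah--Bott--Shapiro classification of real Clifford modules, supplemented by an analysis of invariant bilinear forms. First I would use the periodicity isomorphisms recalled in Section~3.1, namely $Cl_{p+1,q+1}\simeq Cl_{p,q}\otimes\mathbb{R}(2)$, $Cl_{p+4,q}\simeq Cl_{p,q+4}$, and the Cartan--Bott relation $Cl_{p+8,q}\simeq Cl_{p,q}\otimes\mathbb{R}(16)$, to identify $Cl_{s,t}$, up to a matrix-algebra tensor factor, with one of a finite list of model algebras indexed by $s-t\pmod 8$. Each model is a matrix algebra over $\mathbb{R}$, $\mathbb{C}$, or $\mathbb{H}$, or a direct sum of two such; this pins down the dimension $2^{r}$ of the minimal irreducible real representation as a function of the residue $s-t\pmod 8$, and it is exactly this dependence that produces the appearance of $r\pmod 4$ in the conclusion.

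Next I would encode admissibility as the existence of a non-degenerate symmetric scalar product $(\cdot\,,\cdot)_V$ on the representation space $V$ for which every generator $J_z=\rho|_Z$, $z\in Z$, is skew symmetric, i.e. $J_z^{*}=-J_z$. By Remark~\ref{rem1}, skew symmetry together with the Clifford relation $J_z^2=-(z,z)\id_V$ forces the orthogonality condition~\eqref{p2}, so it suffices to produce one such form. I would build it by transport of structure: fix an algebra isomorphism of $Cl_{s,t}$ with its matrix model, equip the model's irreducible module with its standard real, Hermitian, or quaternionic-Hermitian pairing, and then verify on generators that the degree-one anti-involution $v\mapsto -v$ of the Clifford algebra, extended multiplicatively, realizes the adjoint with respect to a real symmetric form of the prescribed signature. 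The signature count is constrained by Proposition~\ref{pr4}, which forces the horizontal metric to be neutral once the center metric is non-positive.

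The hard part will be the bookkeeping of the congruence conditions: showing precisely for which residues $s-t\pmod 8$ and which values of $r\pmod 4$ the transported form is simultaneously real and of the correct symmetry type, namely symmetric rather than skew. This is exactly where the three division algebras diverge, since the conjugations on $\mathbb{C}$ and $\mathbb{H}$ may or may not descend to a real symmetric pairing on the underlying real space, and where the direct-sum cases (those with $s-t\equiv 0,\,4\pmod 8$) require either selecting the correct summand or pairing the two conjugate summands. I would therefore treat each of the five residue classes in the statement separately, matching the parity and residue of $r$ to the available type of invariant form, and checking minimality of $2^{r}$ against the standard tables of irreducible module dimensions. Finally I would cross-check every case against Wolfe's characterisation of $\rho_t(n)$ and $\sigma_s(n)$ recalled above, whose recursions $\rho_t(2n)=\rho_{t-1}(n)+1$ and $\sigma_s(2n)=\sigma_{s-1}(n)+1$ already encode the same period-$8$ behaviour, and against the list in~\cite{Ciatti}.
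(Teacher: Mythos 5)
The paper does not prove this statement at all: it is quoted from \cite{Ciatti} as an external input, so there is no internal proof to compare yours against. Judged on its own terms, your outline follows the strategy that is standard for results of this kind (and is the one used in Ciatti's paper): reduce $Cl_{s,t}$ via the periodicity isomorphisms to a matrix algebra over $\mathbb{R}$, $\mathbb{C}$ or $\mathbb{H}$, or a direct sum of two such, and then decide case by case whether the irreducible module of minimal dimension $2^{r}$ carries a non-degenerate symmetric bilinear form for which the generators act skew-symmetrically. Your reduction of admissibility to exhibiting one such form is also correct: by Remark~\ref{rem1}, skew symmetry \eqref{p4} together with \eqref{p3} (which holds automatically for any representation) yields the orthogonality condition \eqref{p2}. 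So the skeleton is right.

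The difficulty is that everything that makes the theorem true is deferred to ``bookkeeping,'' and the one concrete structural claim you make about that bookkeeping is false. You assert that the direct-sum (non-simple) cases are $s-t\equiv 0,\,4\pmod 8$. They are not: with the paper's sign convention $v^{2}=-Q(v)\mathds{1}$, under which $Cl_{0,1}\simeq\mathbb{R}\oplus\mathbb{R}$ and $Cl_{3,0}\simeq\mathbb{H}\oplus\mathbb{H}$, the non-simple real Clifford algebras occur exactly at $s-t\equiv 3,\,7\pmod 8$ (at $1,\,5$ under the opposite convention, never at $0,\,4$). This is not cosmetic: the residues $3$ and $7$ carry the sharpest restrictions on $r$ in the statement you are trying to prove (residue $7$ allows only $r\equiv 3\pmod 4$), and the summand-selection phenomenon you describe is precisely what produces those restrictions, so your roadmap would have you performing the delicate part of the analysis in the wrong residue classes. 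Relatedly, the decision between a symmetric and a skew-symmetric invariant form on the irreducible module --- which is what determines whether the minimal module is admissible or must be doubled, and hence where the dependence on $r\pmod 4$ genuinely enters --- is named but never carried out; your proposed cross-checks cannot close this, since Wolfe's functions $\rho_t$ and $\sigma_s$ count irreducible representations rather than admissible ones, and checking against ``the list in \cite{Ciatti}'' is circular when that list is the theorem being proved. A smaller point: the adjoint with respect to the sought form is an anti-involution, so the map $v\mapsto -v$ must be extended anti-multiplicatively, $(ab)^{\ast}=b^{\ast}a^{\ast}$, not ``multiplicatively'' as written.
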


All irreducible representations of Clifford algebras $Cl_{s,0}$ are admissible with an inner product, and the representation space is of dimension $2^{r}$, where $s=\rho(2^r)-1$.
In the case $t>0$, if $(s,t)$ does not satisfy the conditions of Theorem~\ref{TCiatti}, then we need to double the representation space and the corresponding representation map has to be redefined in a proper way, see \cite{Ciatti}.
 
Modifying reasonings given in the previous section and applying the Wolfe's existence theorem \cite[Theorem 2.8]{Wolfe} we can  formulate the following theorem.

\begin{theorem}\label{te6}
Let $N=U\oplus_{\perp}Z$ be a free nilpotent  Lie algebra  with a scalar product $(\cdot,\, \cdot)_N=(\cdot,\, \cdot)_U+ (\cdot,\, \cdot)_Z$ which is neutral  on $U$ and of an arbitrarily fixed signature $(s,t)$, $t\geq 1$, on $Z$, with dim$(U)=2^r$, $r\geq 1$. There exists an ideal $A$ such that $N/A$ is an $H$-type algebra $\mathfrak{n}=\mathfrak{h}\oplus_{\perp} \mathfrak{z}$, where $\mathfrak{h}=U$ and the center is $\mathfrak{z}=Z/A$. Given an integer $t\geq 1$, the ideal $A$ can be chosen such that $\dim(\mathfrak{z})=s+t$ and the signature $(s,t)$ are such that $s$ takes every value from 
\begin{itemize}
\item $0\leq s\leq \rho_t(2^r)-1$ in the case when the triplet $(\rho_t(2^r)-1, t, r)$ satisfies one of the conditions of Theorem~\ref{TCiatti};
\item Otherwise, $0\leq s\leq \rho_t(2^{r})-m(r,t)$, where $m=m(r,t)$, $1<m\leq \rho_t(2^{r})$ is the maximal positive integer for which either the triplet $(\rho_t(2^r)-m, t, r)$ satisfies  the conditions of Theorem~\ref{TCiatti} or $\rho_t(2^r)-m=\rho_t(2^{r-1})-1$.
\end{itemize}
\end{theorem}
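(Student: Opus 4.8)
The plan is to reproduce the proof of Theorem~\ref{t6}, replacing the Hurwitz--Radon--Eckmann family by Wolfe's generalized HR$(s,t)$ family and controlling admissibility through Theorem~\ref{TCiatti}. First I would identify $\mathfrak{h}=U$ with its fixed neutral metric, writing $2k=2^{r}$ so that the generators split into a positive half $e_{1},\dots,e_{k}$ and a negative half $e_{k+1},\dots,e_{2k}$. Working inside a $1$-factorization $\mathfrak{p}'$ of $K_{2k}$, Proposition~\ref{p7} shows that an admissible $1$-factor $\mathfrak{p}_{\ell}$ must be of exactly one of two types: either all its pairs keep both indices in the same half, or all its pairs cross between the two halves. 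By Proposition~\ref{p6} the first type produces an operator with $J^{2}_{\omega_{\ell}}=-\id_{U}$ and $(\omega_{\ell},\omega_{\ell})_{Z}=+1$, while the second produces $J^{2}_{\omega_{\ell}}=+\id_{U}$ and $(\omega_{\ell},\omega_{\ell})_{Z}=-1$. In matrix form these are precisely the skew-symmetric block-diagonal and the symmetric block-anti-diagonal matrices exhibited before the statement, so the two types correspond bijectively to the $s$ and the $t$ members of an HR$(s,t)$ family. Note that each symmetric $E_{\ell}$, although symmetric as a matrix, is still skew-symmetric with respect to the neutral metric, which is what makes the construction compatible with property~\eqref{p4}.

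Next I would invoke Wolfe's existence theorem \cite{Wolfe} to produce an integer HR$(s,t)$ family $E_{1},\dots,E_{s+t}$ of $2^{r}\times 2^{r}$ matrices with entries in $\{0,\pm 1\}$: the first $s$ skew-symmetric and squaring to $-I$, the last $t$ symmetric and squaring to $+I$, all pairwise anticommuting. As in Theorem~\ref{t6}, each such matrix has a single nonzero $\pm 1$ in every row and column, hence corresponds to a signed permutation operator $J_{\omega_{\ell}}$, to a $1$-factor $\mathfrak{p}_{\ell}$, and to a vector $\omega_{\ell}=\sum_{(i,j)\in\mathfrak{p}_{\ell}}\alpha_{ij}(e_{i}\times e_{j})$ with $\alpha_{ij}=\pm 1$. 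Setting $\Omega=\spn\{\omega_{1},\dots,\omega_{s+t}\}$, letting $A$ be its orthogonal complement in $Z$, and $\mathfrak{z}=Z/A$, Proposition~\ref{propOrt} then guarantees that $\mathfrak{n}=\mathfrak{h}\oplus_{\perp}\mathfrak{z}$ is a pseudo $H$-type algebra with $\dim\mathfrak{z}=s+t$ and signature $(s,t)$.

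The hard part will be that Wolfe's theorem by itself does not force the resulting representation of $Cl_{s,t}$ on $U$ to be \emph{admissible} for the prescribed neutral metric $(\cdot,\cdot)_{U}$. Admissibility of the representation on the minimal space of dimension $2^{r}$ is exactly the content of Theorem~\ref{TCiatti}. When the triplet $(\rho_{t}(2^{r})-1,t,r)$ satisfies one of Ciatti's conditions, the full Wolfe family already acts admissibly on $U$; since any subfamily of anticommuting operators remains skew-symmetric with respect to $(\cdot,\cdot)_{U}$, keeping the $t$ symmetric generators and selecting between $0$ and $\rho_{t}(2^{r})-1$ of the skew ones realizes every $s$ in the range, which is the first alternative. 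When it does not, admissibility would normally be recovered by doubling the representation space; but here $\dim U=2^{r}$ is fixed and cannot be doubled, so one is forced down to the largest $s$ that is either directly admissible in dimension $2^{r}$ via Theorem~\ref{TCiatti} or is inherited from the half-dimensional admissible level $\rho_{t}(2^{r-1})-1$ by filling $U$ with two copies of the corresponding lower-dimensional admissible representation. This cutoff is precisely $\rho_{t}(2^{r})-m(r,t)$, yielding the second alternative.

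Finally, for each admissible $s$ in the stated range I would take an admissible HR$(s,t)$ family on $U$ of the correct split into $s$ skew-symmetric and $t$ symmetric members, obtained either directly or, in the reduced case, by doubling, build $A$ as the orthogonal complement of $\Omega$ in $Z$, and apply Proposition~\ref{propOrt} once more to conclude that $N/A$ is the desired pseudo $H$-type algebra with center of signature $(s,t)$. The argument of Theorem~\ref{th.rational} then applies verbatim, since the surviving coefficients $\alpha_{ij}$ are again $\pm 1$, so these algebras also carry rational structure constants.
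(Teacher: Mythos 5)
Your proposal is correct and follows essentially the same route as the paper: the paper's proof likewise transplants the construction of Theorem~\ref{t6} to the indefinite setting, choosing a subfamily of 1-factors whose associated matrices form a Wolfe HR$(s,t)$ family, taking $A$ to be the orthogonal complement of $\spn\{\omega_{l_1},\dots,\omega_{l_{s+t}}\}$ in $Z$, and concluding via Proposition~\ref{propOrt}, with the admissibility constraint of Theorem~\ref{TCiatti} and the half-dimension fallback $\rho_t(2^{r-1})-1$ governing the cutoff $m(r,t)$ exactly as you describe. If anything, your write-up makes explicit several points (the role of Propositions~\ref{p6} and~\ref{p7}, the skew-symmetry of the symmetric block-anti-diagonal matrices with respect to the neutral metric, and why subfamilies realize every smaller $s$) that the paper's own rather terse proof leaves to the surrounding discussion.
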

\begin{proof} 
Similarly to the proof of Theorem~\ref{t6} we consider a family of partitions $$\mathfrak{p}'=\{\mathfrak{p}'_{1},\dots, \mathfrak{p}'_{2k-1}\},$$ equivalent to a  1-factorization of
the complete graph $K_{2k}$. We recall the  isomorphism between the operators $J_{\omega_l}$, $l\in 1,\dots, 2k-1$, and the orthogonal matrices $E_l$ constructed in the proof of Theorem~\ref{t6}.
Next we choose a family of partitions $\mathfrak{p}''=\{\mathfrak{p}''_{1},\dots, \mathfrak{p}''_{s+t}\}\subset \mathfrak{p}'$ such that the set of matrices $$\{E_{l_1},\dots E_{l_{s}},E_{l_{s+1}},\dots, E_{l_{s+t}}\}$$ is an  HR$(s,t)$ family  with the maximal value of $s$.

Now we set $\mathfrak{h}=U$ and $A$ to be the orthogonal complement in $Z$ to $$\spn\{\omega_{l_1},\dots, \omega_{l_{s}},\omega_{l_{s+1}},\dots, \omega_{l_{s+t}} \}.$$ Due to the correspondence between the operators $J_{\omega_{l_m}}$, $m=1,\dots,s+t$ and the matrices from the HR$(s,t)$ family, the operators $J_{\omega_{l_m}}$ satisfy the conditions of Proposition~\ref{propOrt}, and therefore, the orthogonality condition~\eqref{p2}.   Thus, we define $\mathfrak{z}=Z/A$, and $\mathfrak{h}\oplus_{\perp} \mathfrak{z}$ is a pseudo $H$-type algebra $\mathfrak{n}$.
\end{proof}
We illustrate Theorem~\ref{te6} in Figure 2 for all possible triplets $(s, t=t_{\rm fix}, r)$ satisfying  the conditions of Theorem~\ref{TCiatti}, which we denote by ($\star$) in the diagrams.
The first line corresponds to the situation when the triplet $(\rho_t(2^r)-1, t, r)$ satisfies one of the conditions of Theorem~\ref{TCiatti}, and then, the maximal value of $s$ is chosen. Otherwise, we analize one step down $s=\rho_t(2^r)-2$.
Then the representation space can be either of dimension $2^{r-1}$ (line 2 and 3) or of dimension $2^{r}$ (line 3 and 4). In the first case (line 2), we may have that the triplet $(\rho_t(2^r)-2, t, r-1)$ satisfies  the conditions of Theorem~\ref{TCiatti} and we choose the maximal value of $s$ to be $\rho_t(2^r)-2$. Otherwise (line 3), the triplet $(\rho_t(2^r)-2, t, r-1)$ does not satisfy  the conditions of Theorem~\ref{TCiatti}, however $\rho_t(2^r)-2=\rho_t(2^{r-1})-1$ and we choose the maximal value of $s$ to be $\rho_t(2^r)-2$. If  the representation space  for $s=\rho_t(2^r)-2$ is  of dimension $2^{r}$, then we may have the triplet $(\rho_t(2^r)-2, t, r)$ satisfying  the conditions of Theorem~\ref{TCiatti}, and then we choose the maximal value of $s$ to be $\rho_t(2^r)-2$.
Otherwise, we continue to step down $s\to s-1$ and repeat the previous procedure over again. Positivity of $s$ guarantees finiteness of this algorithm. Normally, it takes at most 3 steps
to finish the choice of the maximal value of $s$.

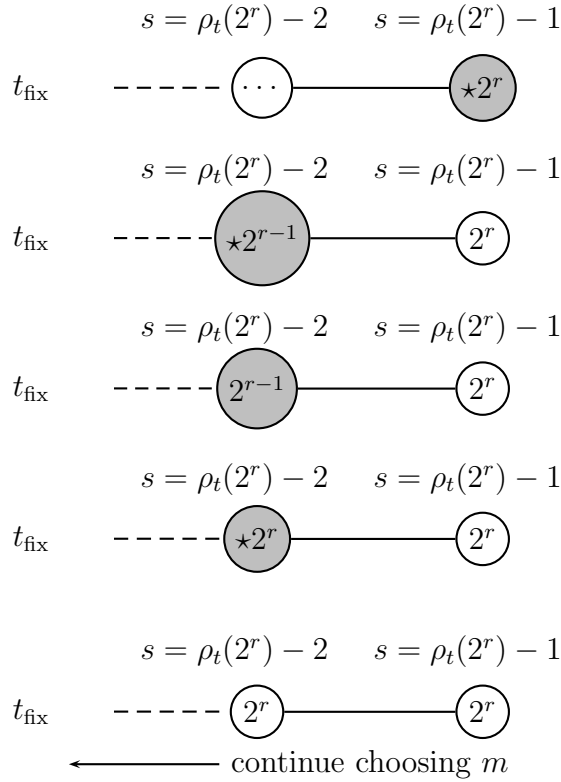
\begin{figure}
\begin{pspicture}(0,-1)(5,10)
\rput(5.5,0.2){\circlenode{A}{$2^r$}}
\rput(2.5,0.2){\circlenode{B}{$2^r$}}
\ncline{A}{B}
\psline[linestyle=dashed](0.6,0.2)(2,0.2)
\rput(2.2,1){$s= \rho_t(2^r)-2$}
\rput(5.3,1){$s= \rho_t(2^r)-1$}
\rput(-0.5,0.2){$t_{\rm fix}$}
\rput(5.5,2.5){\circlenode{A}{$2^r$}}
\rput(2.5,2.5){\circlenode[fillstyle=solid, fillcolor=lightgray]{B}{$\star 2^r$}}
\ncline{A}{B}
\psline[linestyle=dashed](0.6,2.5)(2,2.5)
\rput(2.2,3.3){$s= \rho_t(2^r)-2$}
\rput(5.3,3.3){$s= \rho_t(2^r)-1$}
\rput(-0.5,2.5){$t_{\rm fix}$}
\rput(5.5,4.5){\circlenode{A}{$2^{r}$}}
\rput(2.5,4.5){\circlenode[fillstyle=solid, fillcolor=lightgray]{B}{$2^{r-1}$}}
\ncline{A}{B}
\psline[linestyle=dashed](0.6,4.5)(1.9,4.5)
\rput(2.2,5.3){$s= \rho_t(2^r)-2$}
\rput(5.3,5.3){$s= \rho_t(2^r)-1$}
\rput(-0.5,4.5){$t_{\rm fix}$}
\rput(5.5,6.5){\circlenode{A}{$ 2^{r}$}}
\rput(2.5,6.5){\circlenode[fillstyle=solid, fillcolor=lightgray]{B}{$ \star 2^{r-1}$}}
\ncline{A}{B}
\psline[linestyle=dashed](0.6,6.5)(1.9,6.5)
\rput(2.2,7.4){$s= \rho_t(2^r)-2$}
\rput(5.3,7.4){$s= \rho_t(2^r)-1$}
\rput(-0.5,6.5){$t_{\rm fix}$}
\rput(5.5,8.5){\circlenode[fillstyle=solid, fillcolor=lightgray]{A}{$\star 2^{r}$}}
\rput(2.5,8.5){\circlenode{B}{$\dots$}}
\ncline{A}{B}
\psline[linestyle=dashed](0.6,8.5)(2,8.5)
\rput(2.2,9.4){$s= \rho_t(2^r)-2$}
\rput(5.3,9.4){$s= \rho_t(2^r)-1$}
\rput(-0.5,8.5){$t_{\rm fix}$}
\psline{<-}(0,-0.5)(2,-0.5)
\rput(4,-0.5){continue choosing $m$}
\end{pspicture}\label{fig1}
\caption{The diagram illustrates the algorithm of the choice of maximal $s$ in Theorem~\ref{te6}. Nodes mean the Clifford modules  of dimension written inside. Filled nodes mean the chosen position of $s$.}
\end{figure}

Analogously we formulate a theorem of existence of $H$-type algebras for a given value of~$s$.

\begin{theorem}\label{te7}
Let $N=U\oplus_{\perp}Z$ be a free nilpotent  Lie algebra  with a scalar product $(\cdot,\, \cdot)_N=(\cdot,\, \cdot)_U+ (\cdot,\, \cdot)_Z$ which is neutral  on $U$ and of arbitrarily fixed signature $(s,t)$, $t\geq 1$, on $Z$, with dim$(U)=2^r$, $r\geq 1$. There exists an ideal $A$ such that $N/A$ is an $H$-type algebra $\mathfrak{n}=\mathfrak{h}\oplus_{\perp} \mathfrak{z}$, where $\mathfrak{h}=U$ and the center is $\mathfrak{z}=Z/A$. Given an integer $s\geq 0$, the ideal $A$ can be chosen such that $\dim(\mathfrak{z})=s+t$ and the signature $(s,t)$ are such that $t$  takes every value from 
\begin{itemize}
\item $0\leq t\leq \sigma_s(2^r)-1$ in the case the triplet $(s, \sigma_s(2^r)-1, r)$ satisfies one of the conditions of Theorem~\ref{TCiatti};
\item Otherwise, $0\leq t\leq \sigma_s(2^r)-m$, where $1<m\leq \sigma_s(2^r)$ is the maximal positive integer for which either the triplet $(s, \sigma_s(2^r)-m, r)$ satisfies the conditions of Theorem~\ref{TCiatti} or $\sigma_s(2^r)-m= \sigma_s(2^{r-1})-1$.
\end{itemize}
\end{theorem}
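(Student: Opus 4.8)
The plan is to exploit the complete symmetry between the roles of $s$ and $t$ in the definition of an HR$(s,t)$ family and in Wolfe's characterization, so that the argument becomes a mirror image of the proof of Theorem~\ref{te6}. Concretely, where Theorem~\ref{te6} fixes $t$ and maximizes $s$ using the bound $s\leq\rho_t(2^r)-1$ together with the function $\rho_t$, here I would fix $s$ and maximize $t$ using the dual bound $t\leq\sigma_s(2^r)-1$ together with the function $\sigma_s$. Since $\rho_t$ and $\sigma_s$ satisfy the analogous recursions $\rho_t(2n)=\rho_{t-1}(n)+1$, $\rho_t(n)=\rho_{t+8}(16n)$ and $\sigma_s(2n)=\sigma_{s-1}(n)+1$, $\sigma_s(n)=\sigma_{s+8}(16n)$, every structural step transfers verbatim.

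First I would reproduce the combinatorial setup from the proof of Theorem~\ref{te6}: take the $1$-factorization $\mathfrak{p}'=\{\mathfrak{p}'_1,\dots,\mathfrak{p}'_{2k-1}\}$ of the complete graph $K_{2k}$ with $2k=2^r$, and recall the one-to-one correspondence between the operators $J_{\omega_l}$ acting as signed permutations of the basis of $U$ and the integer orthogonal matrices $E_l$ with entries $0,\pm 1$. In the neutral-metric case, Proposition~\ref{p7} forces each $E_l$ into the block form with skew-symmetric diagonal blocks $A,D$ and off-diagonal blocks satisfying $B=C^T$, so that $E_l^2=-I$ (skew-symmetric $E_l$) or $E_l^2=I$ (symmetric $E_l$) according to whether $(\omega_l,\omega_l)_Z=\pm 1$. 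This is exactly the dichotomy encoded by an HR$(s,t)$ family.

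Next, given the fixed $s\geq 0$, I would invoke Wolfe's existence theorem \cite[Theorem 2.8]{Wolfe} to select a subfamily $\mathfrak{p}''=\{\mathfrak{p}''_1,\dots,\mathfrak{p}''_{s+t}\}\subset\mathfrak{p}'$ whose associated matrices form an HR$(s,t)$ family realizing the maximal admissible $t$; Wolfe guarantees $t\leq\sigma_s(2^r)-1$ with the bound attained by integer matrices. The admissibility of the corresponding Clifford representation---the existence of a scalar product on $U$ for which $J_z$ is skew-symmetric---is then supplied by Ciatti's Theorem~\ref{TCiatti}. When the triplet $(s,\sigma_s(2^r)-1,r)$ satisfies one of the congruence conditions of Theorem~\ref{TCiatti}, the maximal value $t=\sigma_s(2^r)-1$ is realized over the representation space of dimension $2^r$. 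Otherwise one steps down $t\to t-1$ and uses either that a lower triplet $(s,\sigma_s(2^r)-m,r)$ satisfies Theorem~\ref{TCiatti} or that $\sigma_s(2^r)-m=\sigma_s(2^{r-1})-1$, i.e., the representation descends to dimension $2^{r-1}$; positivity of $t$ terminates the procedure, normally within three steps. This is precisely the algorithm of Figure~2 with $s$ and $t$ interchanged. Finally, setting $\mathfrak{h}=U$ and letting $A$ be the orthogonal complement in $Z$ of $\spn\{\omega_{l_1},\dots,\omega_{l_{s+t}}\}$, Proposition~\ref{propOrt} yields that $\mathfrak{h}\oplus_\perp(Z/A)$ is a pseudo $H$-type algebra with $\dim\mathfrak{z}=s+t$ and center signature $(s,t)$.

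The main obstacle I anticipate is the case analysis required to certify admissibility when the top triplet fails Ciatti's conditions: one must verify that the stepping-down always lands, in a bounded number of steps, either on a triplet satisfying Theorem~\ref{TCiatti} at dimension $2^r$ or on the reduction to dimension $2^{r-1}$. This rests on the periodicity and recursion properties of $\sigma_s$ mirroring those of $\rho_t$ used in Theorem~\ref{te6}, and on checking that the block structure of Proposition~\ref{p7} remains compatible with the mixture of symmetric and skew-symmetric generators dictated by the signature $(s,t)$.
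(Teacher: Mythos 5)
Your proposal is correct and takes essentially the same approach as the paper: in fact the paper gives no separate proof of Theorem~\ref{te7} at all, stating only that it is formulated ``analogously'' to Theorem~\ref{te6}, and your argument is precisely that analogy spelled out---fix $s$, choose via Wolfe's existence theorem a subfamily of partitions whose matrices form an HR$(s,t)$ family with $t\leq\sigma_s(2^r)-1$, certify admissibility with Theorem~\ref{TCiatti} (stepping down in $t$ when the top triplet fails, exactly as in the algorithm of Figure~2 with the roles of $s$ and $t$ interchanged), and take $A$ to be the orthogonal complement in $Z$ of $\spn\{\omega_{l_1},\dots,\omega_{l_{s+t}}\}$, with Proposition~\ref{propOrt} supplying the pseudo $H$-type structure on $\mathfrak{h}\oplus_{\perp}(Z/A)$. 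No gaps.
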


Below we collect in Table~1 the first maximal values of the parameter $s$ with a given dimension of $\mathfrak{h}$ and a given value of $t$; and in Table~2 the first maximal values of the parameter $t$ with a given dimension of $\mathfrak{h}$ and a given value of $s$ such that the $H$-type algebra 
$\mathfrak{n}=\mathfrak{h}\oplus \mathfrak{z}$ exists.

\begin{table}[ht!]
  \centering
 \begin{tabular}{|l||*{5}{c|}}\hline
\backslashbox{$\,\,\,2^r$}{$t\,\,\,$}
&\makebox[3em]{0}&\makebox[3em]{1}&\makebox[3em]{2}
&\makebox[3em]{3}&\makebox[3em]{4}\\\hline\hline
$2$ &1&0&-&-&-\\\hline
$4$ &3&1&1&-&-\\\hline
$8$ &7&3&3&3&3\\\hline
$16$ &8&7&5&4&4\\\hline
$32$ &9&8&7&5&5\\\hline
\end{tabular}
\vspace{15pt}
\caption{Maximal values of $s$}
 \end{table}

\begin{table}[ht!]
  \centering
 \begin{tabular}{|l||*{5}{c|}}\hline
\backslashbox{$\,\,\,2^r$}{$s\,\,\,$}
&\makebox[3em]{0}&\makebox[3em]{1}&\makebox[3em]{2}
&\makebox[3em]{3}&\makebox[3em]{4}\\\hline\hline
$2$ &1&0&-&-&-\\\hline
$4$ &2&2&0&0&-\\\hline
$8$ &4&4&4&4&0\\\hline
$16$ &8&6&5&5&4\\\hline
$32$ &9&8&6&6&5\\\hline
\end{tabular}
\vspace{15pt}
\caption{Maximal values of $t$}
 \end{table}
If an $H$-type algebra corresponds to signature $(s^*,t^*)$, then there exist all $H$-type algebras with the signatures $(0,t^*)$, \dots, $(s^*-1,t^*)$; $(s^*,0)$,\dots, $(s^*,t^*-1)$. The space $\mathfrak h$ of these $H$-type algebras can be the minimal dimensional admissible representation of the corresponding Clifford algebra, but can be of a bigger dimension. 
It follows from Tables~1 and 2 that, for example, given the dimension $4$ of  $\mathfrak{h}$ the following $H$-type algebras exist with the signatures of $\mathfrak{z}$ equal to
\[
(0,2),\quad (1,2),\quad(2,0),\quad(3,0);\qquad(3,0),\quad(1,1),\quad(1,2),
\]
as well as with the signatures $(0,1)$ and $(1,0)$, but do not exist with the signatures $(2,1)$ and $(0,3)$.

The metric $(\cdot\, ,\cdot)_Z$ on $N$, see Section 3.1,  is now normalized so that $(e_i\times e_j, e_i\times e_j)=\varepsilon_{ij}(s,t)/k$, where
\[
\varepsilon_{ij}(s,t)=
\begin{cases} 1, &  \text{for $(i,j)\in \mathfrak p_{\ell}''$, $\ell=1,\dots, s$;} \\
-1, & \text{for $(i,j)\in \mathfrak p_{\ell}''$, $\ell=s+1,\dots, s+t$.} 
\end{cases}
\]

\begin{remark}\label{rem4}
For the space $\mathfrak{h}=U$ of dimension  $u2^r$, where $u$ is odd, Theorems~\ref{te6} and~\ref{te7} are applied as for the dimension $2^r$.
\end{remark}

Analogously to Theorem~\ref{th.rational} we prove the following statement.
\begin{theorem}\label{th.rational_indef}
The pseudo $H$-type algebras admit rational structure constants.
\end{theorem}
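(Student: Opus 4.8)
The plan is to follow the template of Theorem~\ref{th.rational} for the positive definite case, reducing the claim to showing that each bracket of generators is a rational multiple of a single central basis vector, and then to isolate the one place where the indefinite signature could interfere. I would take $\mathfrak{h}=U$ with its generating basis $e_1,\dots,e_{2k}$ and $\mathfrak{z}=Z/A$ spanned by the images of the vectors $\omega_{l_1},\dots,\omega_{l_{s+t}}$ produced in Theorem~\ref{te6}, where $A$ is the $(\cdot\,,\cdot)_Z$-orthogonal complement of $\spn\{\omega_{l_m}\}$ in $Z$. Since $[e_n,e_m]=e_n\times e_m\,(\mathrm{mod}\ A)$ and the only nonzero brackets are those between generators, it suffices to express each $e_n\times e_m\,(\mathrm{mod}\ A)$, with $(n,m)$ lying in some partition $\mathfrak{p}''_l$, as a rational multiple of $\omega_l\,(\mathrm{mod}\ A)$.

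First I would record two facts that keep the coefficients rational. By Wolfe's existence theorem the HR$(s,t)$ matrices, and hence the $\omega_{l_m}$, may be taken with entries $\alpha_{ij}\in\{0,\pm1\}$, so in particular $\alpha_{ij}=\pm1$ for every $(i,j)\in\mathfrak{p}''_l$; and the normalization fixed at the end of the section gives $(e_i\times e_j,e_i\times e_j)_Z=\varepsilon_{ij}(s,t)/k$, which is constant along each partition, equal to a common sign $\varepsilon_l\in\{\pm1\}$ for all $(i,j)\in\mathfrak{p}''_l$. For fixed $(n,m)\in\mathfrak{p}''_l$ I would then split $\omega_l=\alpha_{nm}\,e_n\times e_m+\sum_{(i,j)\in\mathfrak{p}''_l\setminus(n,m)}\alpha_{ij}\,e_i\times e_j$ and introduce the auxiliary vectors $\omega_{(l;ij)}=\alpha_{nm}\,e_n\times e_m-\alpha_{ij}\,e_i\times e_j$, exactly as in the definite case.

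The decisive step, and the only one sensitive to the signature, is to verify $\omega_{(l;ij)}\in A$, i.e. $\omega_{(l;ij)}\perp\omega_{l'}$ for every $l'$. For $l'\neq l$ this is automatic, since distinct $1$-factors of $K_{2k}$ share no edge, so the two basis vectors occurring in $\omega_{(l;ij)}$ are $Z$-orthogonal to every term of $\omega_{l'}$. For $l'=l$ the cross terms vanish by orthogonality of distinct $e_a\times e_b$, leaving $(\omega_{(l;ij)},\omega_l)_Z=\alpha_{nm}^2(e_n\times e_m,e_n\times e_m)_Z-\alpha_{ij}^2(e_i\times e_j,e_i\times e_j)_Z=\varepsilon_l/k-\varepsilon_l/k=0$. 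The hard part is precisely this cancellation: in the positive definite case it was trivial because all norms equalled $1/k$, whereas here it works only because Proposition~\ref{p7} and the chosen normalization guarantee that the signs $\varepsilon_{ij}$ do not mix inside a single partition $\mathfrak{p}''_l$.

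Once $\omega_{(l;ij)}\in A$ is established, the relation $\alpha_{nm}\,e_n\times e_m\equiv\alpha_{ij}\,e_i\times e_j\,(\mathrm{mod}\ A)$ lets me rewrite every summand of $\omega_l$ in terms of $e_n\times e_m$, giving $\omega_l\equiv k\,\alpha_{nm}\,e_n\times e_m\,(\mathrm{mod}\ A)$ and hence $e_n\times e_m\equiv\frac{\alpha_{nm}}{k}\,\omega_l\,(\mathrm{mod}\ A)$, where I use $\alpha_{nm}^{-1}=\alpha_{nm}$. Thus in the basis $\{e_1,\dots,e_{2k},\omega_{l_1},\dots,\omega_{l_{s+t}}\}$ all structure constants are either $0$ or of the form $\alpha_{nm}/k\in\mathbb{Q}$. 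Since every pseudo $H$-type algebra is isomorphic to one of the quotients $N/A$ constructed in Theorem~\ref{te6}, and admitting a rational basis is an isomorphism invariant, this proves the claim; via the Mal'cev criterion (Theorem~\ref{MC}) it also furnishes uniform lattices on the associated pseudo $H$-type groups.
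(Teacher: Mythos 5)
Your proposal is correct and takes essentially the same approach as the paper: the paper's entire proof of Theorem~\ref{th.rational_indef} is the remark that it follows ``analogously to Theorem~\ref{th.rational}'', and your write-up is precisely that analogy carried out in detail. In particular, you correctly isolate the only signature-sensitive step --- the cancellation $(\omega_{(l;ij)},\omega_l)_Z=\varepsilon_l/k-\varepsilon_l/k=0$ proving $\omega_{(l;ij)}\in A$ --- and justify it by the fact that the normalization $\varepsilon_{ij}(s,t)$ is constant on each partition $\mathfrak{p}''_l$, which is exactly what the paper's setup (the normalization at the end of Section~4.4 together with Proposition~\ref{p7}) is designed to guarantee.
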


\subsubsection{Existence of solution and isomorphisms}
Let us analyse the equation $E_1E_2+E_2E_1=0$. If the matrix $E_1E_2+E_2E_1$ has a non-zero element at a place $(i,j)$, then it is of the form $\alpha_{ik_1}\alpha_{k_1j}+\alpha_{ik_2}\alpha_{k_2j}$, which is the
result of the product and sum of the elements at the places $(i,k_1)$, $(k_1,i)$, $(k_2,j)$, and $(j,k_2)$ in $E_1$, and  $(i,k_2)$, $(k_2,i)$, $(k_1,j)$, and $(j,k_1)$ in $E_2$. Without loss of generality
we assume that $i<k_1<k_2<j$. Otherwise, we consider the upper triangles in the matrices $E_1$ and $E_2$. The difference between symmetric and skew-symmetric matrices is
the change of sign at the elements $\alpha_{ik_1}$ and $\alpha_{k_2j}$ or at the elements $\alpha_{ik_2}$ and $\alpha_{k_1j}$ which does not influence the equation $\alpha_{ik_1}\alpha_{k_1j}+\alpha_{ik_2}\alpha_{k_2j}=0$. Therefore, the existence of solutions for $\alpha_k$ is the same as for the positive definite metric. Indeed, we split this problem in two.
Given a set $\mathfrak p_{\ell}''$ of pairs $(i,j)$, and therefore,  a skew-symmetric matrix $E_{\ell}=\{\alpha^{(\ell)}_{ij}\}$ with $\alpha^{(\ell)}_{ij}=\pm 1$ for all $(i,j)\in \mathfrak p_{\ell}''$,  $\alpha^{(\ell)}_{ij}=0$ for all $(i,j)\not\in \mathfrak p_{\ell}''$, and $\alpha^{(\ell)}_{ji}:=-\alpha^{(\ell)}_{ij}$,  let us introduce the diagonal matrix $B_{\ell}$ having 1 at $(i,i)$ place and -1 at $(j,j)$ place, $(i,j)\in \mathfrak p_{\ell}''$, $i<j$, and zeros otherwise. Then $B_{\ell}^2=I$, and $E_{\ell}B_{\ell}$ is a symmetric matrix which is obtained from $E_{\ell}$ by multiplication by -1 of the lower triangle part.
Now    
\begin{itemize}
\item  We are looking for the
orthogonal matrices $E_{\ell}$, which are the solutions  to the equations $E_j^2=-I$ and $E_iE_j=-E_jE_i$ for $(i,j)\in \mathfrak p_{\ell}''$, $\ell=1,\dots, s+t$ as for the positive definite metric;
\item Change  $E_{\ell}\to \tilde{E}_{\ell}:=E_{\ell}B_{\ell}$ for $\ell=s+1,\dots, s+t$.
\end{itemize}
Then  $E_iE_j=-E_jE_i$,  $E_i\tilde{E}_j=-\tilde{E}_jE_i$, $\tilde{E}_i\tilde{E}_j=-\tilde{E}_j\tilde{E}_i$, $E_j^2=-I$, and $\tilde{E}_j^2=I$.
 This gives the existence of solutions for  $\alpha_{ij}$ and the existence
of isomorphism of the resulting pseudo $H$-type algebras for different solutions.

\section{Applications to combinatorial and orthogonal designs}

\subsection{Square 1-factorization of a complete graph}

Given a set of partitions  $\mathfrak{p}$ of $2k$ numbers into pairs we want to check all possible subset of partitions $\mathfrak{p}'\subseteq \mathfrak{p}$ in which we look for $\mathfrak{p}''\subseteq \mathfrak{p}'\subseteq \mathfrak{p}$  
of maximal possible length satisfying the following condition: any two partitions $\mathfrak{p}_{{m_{\alpha}}}\in \mathfrak{p}''$ and $\mathfrak{p}_{{m_{\beta}}}\in\mathfrak{p}''$ set together give disjoint cycles of length 4. This can be reformulated in terms of graphs as in Section~3.1. Given a complete graph with $2k$ vertices at the numbers $1\dots, 2k$ we look at the maximal
number of 1-factors from a 1-factorization defined by $\mathfrak{p}'$
satisfying the property:  any two 1-factors  set together give disjoint cycles of length 4.
In particular, the Steiner tournament  for a complete graph $K_8$ of $8$ vertices guarantees the existence  of maximal $\mathfrak{p}''= \mathfrak{p}'$ of length $p=7$. 

The general case of higher dimensions represents a hard problem of combinatorial design related to so-called {\it 1-factorization conjecture}, i.e., the problem of counting the number of non-isomorphic 1-factorizations of a complete graph $K_{2n}$. For small $n$
the answer is known only for $n\leq 7$. The unique factorization exists for $n=1,2,3$. In the case $n=4$, there are 6 non-isomorphic factorizations, two of which,  Kirkman and Steiner were mentioned before. Other non-trivial non-isomorphic factorizations are known for $n=5,6,7$, see \cite{Kaski}. 
The existence of $\mathfrak{p}''$ of maximal length is achieved for $K_{2k}$ if  $k=2^n$.  

\begin{theorem}\cite[Kobayashi and Nakamura]{KN}
There exists a 1-factorization of a complete graph $K_{2k}$ such that  any two 1-factors set together  form the union of disjoint cycles of length 4, if and only if, $k=2^n$, $n\geq 1$.
\end{theorem}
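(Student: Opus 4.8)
The plan is to recast the purely combinatorial condition in terms of permutations and then exploit the resulting group structure. First I would encode each $1$-factor $F_\ell$ of $K_{2k}$ as the fixed-point-free involution $\pi_\ell$ on the vertex set $V=\{1,\dots,2k\}$ that sends each vertex to its partner in the matching $F_\ell$; this is exactly the underlying (unsigned) permutation of the matrix $E_\ell$ from the previous sections. Because $\{F_1,\dots,F_{2k-1}\}$ is a $1$-factorization, for every vertex $x$ the map $\ell\mapsto\pi_\ell(x)$ is a bijection onto $V\setminus\{x\}$; equivalently, the set $G:=\{\id,\pi_1,\dots,\pi_{2k-1}\}$ has exactly $2k$ elements and $\{g(x):g\in G\}=V$ for every $x$.

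The key reformulation is that the square condition is equivalent to pairwise commutativity of the $\pi_\ell$. For $i\neq j$ the union $F_i\cup F_j$ is a disjoint union of even cycles, and a cycle of length $2\ell$ in $F_i\cup F_j$ corresponds exactly to an $\ell$-cycle of the product $\pi_i\pi_j$. Since distinct factors are edge-disjoint, $\pi_i\pi_j$ has no fixed points; hence ``$F_i\cup F_j$ consists only of $4$-cycles'' is equivalent to ``$\pi_i\pi_j$ is an involution'', which in turn is equivalent to $\pi_i\pi_j=\pi_j\pi_i$. Thus a square $1$-factorization exists if and only if $K_{2k}$ carries $2k-1$ pairwise commuting fixed-point-free involutions forming a $1$-factorization. (Note that this is genuinely weaker than the anticommuting signed Hurwitz--Radon condition, which explains why all $2k-1$ factors can be made square while only $\rho(2k)-1$ can be fitted into an HR family.)

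For the ``only if'' direction I would consider the subgroup $H:=\langle\pi_1,\dots,\pi_{2k-1}\rangle\le\mathrm{Sym}(V)$. Being generated by pairwise commuting involutions, $H$ is an elementary abelian $2$-group, $H\cong(\mathbb Z_2)^d$. From the $1$-factorization property the $H$-orbit of any vertex $x$ already contains $\{\pi_\ell(x)\}=V\setminus\{x\}$, so $H$ acts transitively on $V$. For an abelian transitive subgroup of $\mathrm{Sym}(V)$ all point stabilizers coincide, since $H_{gx}=gH_xg^{-1}=H_x$, and their common value equals $\bigcap_{y}H_y=\{\id\}$ by faithfulness of the permutation action; hence the action is regular and $2k=|V|=|H|=2^d$. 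Therefore $k=2^{d-1}$, and as a square $1$-factorization is only meaningful when at least two factors are compared we have $2k\ge 4$, i.e. $d\ge 2$ and $k=2^n$ with $n=d-1\ge 1$.

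For the ``if'' direction, given $k=2^n$ I set $V=(\mathbb Z_2)^{n+1}$, so $|V|=2^{n+1}=2k$, and for each nonzero $g\in V$ define the matching $F_g=\{\{x,x+g\}:x\in V\}$. Each edge $\{x,y\}$ lies in the unique factor $F_{x+y}$, so $\{F_g:g\neq 0\}$ is a $1$-factorization into $2k-1$ factors; and for distinct nonzero $g,h$ the subgroup $\langle g,h\rangle\cong(\mathbb Z_2)^2$ partitions $V$ into cosets $\{x,x+g,x+h,x+g+h\}$, on each of which $F_g\cup F_h$ closes up into the $4$-cycle $x,\,x+g,\,x+g+h,\,x+h$. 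Hence this is a square $1$-factorization, completing the equivalence. The main obstacle is the ``only if'' step: the real work is recognizing that the combinatorial $4$-cycle condition forces the involutions to commute, and then upgrading transitivity to regularity of the elementary abelian action, which is precisely what pins the vertex count to a power of $2$.
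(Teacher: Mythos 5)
Your proof is correct, but there is no internal proof to compare it against: the paper quotes this theorem verbatim from Kobayashi and Nakamura \cite{KN} as a known result, and its own contribution in this direction is only the weaker Theorem~\ref{tcd}, which produces $\rho(2k)-1$ pairwise-square factors via Hurwitz--Radon families. Your argument is a clean, self-contained substitute. The key reformulation is right: since distinct factors are edge-disjoint, $\pi_i\pi_j$ is fixed-point free, and all cycles of $F_i\cup F_j$ have length $4$ precisely when $(\pi_i\pi_j)^2=\id$, i.e.\ when $\pi_i\pi_j=\pi_j\pi_i$ (the only imprecision is that a $2\ell$-cycle of $F_i\cup F_j$ actually splits into \emph{two} $\ell$-cycles of $\pi_i\pi_j$, which changes nothing in the equivalence). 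The necessity argument is sound: the group $H$ generated by the commuting fixed-point-free involutions is elementary abelian, transitive by the factorization property, hence regular because an abelian transitive permutation group has all point stabilizers equal and therefore trivial, forcing $2k=|H|=2^d$; and the sufficiency construction on $V=(\mathbb{Z}_2)^{n+1}$ with factors $F_g=\{\{x,x+g\}\colon x\in V\}$ is the standard Boolean--Cayley construction. In effect you have reproved, for complete graphs, Ihrig's characterization \cite{Ihrig} (square 1-factorizations correspond to Cayley graphs of $(\mathbb{Z}_2)^n$), which the paper cites in Section~2 but never uses in a proof; this group-theoretic route is genuinely different from, and stronger than, the paper's matrix machinery, since anticommuting signed HR matrices can never supply more than $\rho(2k)-1$ factors, whereas your unsigned commuting involutions cover all $2k-1$ --- exactly the distinction your parenthetical remark makes. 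Your treatment of the degenerate case $k=1$, where the pairwise condition is vacuous, as excluded by convention is also the right reading of the statement.
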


As an application to combinatorial design, let us formulate the following theorem.

\begin{theorem}\label{tcd}
There exists a 1-factorization of a complete graph $K_{2k}$ such that at most  $\rho(2k)-1$  one-factors satisfy the condition: being set together pairwise they  form the union of disjoint cycles of length 4.
\end{theorem}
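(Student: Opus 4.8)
The plan is to read off the desired $1$-factorization directly from the Hurwitz-Radon family produced in Theorem~\ref{t6}, using the dictionary between $1$-factors of $K_{2k}$ and signed permutation matrices established there. First I recall that each partition $\mathfrak{p}_l\in\mathfrak{p}'$ is a $1$-factor of $K_{2k}$, and that the associated operator $J_{\omega_l}$ corresponds to an orthogonal signed permutation matrix $E_l$ whose underlying (unsigned) permutation $\pi_l$ is exactly the fixed-point-free involution matching $i$ with $j$ for each pair $(i,j)\in\mathfrak{p}_l$. By Theorem~\ref{Eckmann}, together with the achievability of the bound for integer matrices (\cite[Theorem 1.6]{Geramita79}), there is a family $E_{l_1},\dots,E_{l_{\rho(2k)-1}}$ among the factors of $\mathfrak{p}'$ satisfying $E_j^2=-I$ and $E_iE_j=-E_jE_i$; these $\rho(2k)-1$ matrices single out $\rho(2k)-1$ pairwise edge-disjoint $1$-factors lying inside the single $1$-factorization $\mathfrak{p}'$ of $K_{2k}$.

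The heart of the argument is an elementary translation between the algebraic and the combinatorial conditions, which I would isolate as a lemma. Let $M_i,M_j$ be two edge-disjoint perfect matchings of $K_{2k}$ with fixed-point-free involutions $\pi_i,\pi_j$. The union $M_i\cup M_j$ is a $2$-regular graph, hence a disjoint union of even cycles alternating between edges of $M_i$ and $M_j$; a cycle of length $2\ell$ in this union corresponds to an $\ell$-cycle of the permutation $\pi_i\pi_j$. Consequently $M_i\cup M_j$ is a disjoint union of $4$-cycles if and only if $\pi_i\pi_j$ is a fixed-point-free involution. Edge-disjointness already forces $\pi_i(v)\neq\pi_j(v)$ for every vertex $v$, so $\pi_i\pi_j$ has no fixed points automatically; and $(\pi_i\pi_j)^2=\id$ is equivalent, since $\pi_i$ and $\pi_j$ are involutions, to the commutation relation $\pi_i\pi_j=\pi_j\pi_i$. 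Thus the $4$-cycle condition for a pair of factors is exactly the commutativity of their underlying involutions.

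It then remains to observe that the anticommutation of the signed permutations descends to commutation of the underlying permutations. Writing each $E_l$ as a product of a diagonal sign matrix and a permutation matrix, the nonzero pattern of $E_iE_j$ is governed by $\pi_i\pi_j$ and that of $E_jE_i$ by $\pi_j\pi_i$; an equation $E_iE_j=-E_jE_i$ between signed permutation matrices can hold only if the two sides share the same support, forcing $\pi_i\pi_j=\pi_j\pi_i$. Combining this with the lemma, each pair among $\mathfrak{p}_{l_1},\dots,\mathfrak{p}_{l_{\rho(2k)-1}}$ sets together into a disjoint union of $4$-cycles; since all of these belong to the single $1$-factorization $\mathfrak{p}'$, this $\mathfrak{p}'$ is the required $1$-factorization of $K_{2k}$ carrying $\rho(2k)-1$ factors with the stated square property.

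I expect the only genuine subtlety to be the bookkeeping in the cycle-length computation of the lemma, namely making precise that a length-$2\ell$ alternating cycle corresponds to an $\ell$-cycle of $\pi_i\pi_j$ and that edge-disjointness removes fixed points; the passage from signed anticommutation to unsigned commutation is purely a matter of comparing supports. The numerical value $\rho(2k)-1$ is inherited verbatim from the Hurwitz-Radon-Eckmann bound of Theorem~\ref{Eckmann}, so no new extremal combinatorics is needed beyond what Theorem~\ref{t6} already supplies.
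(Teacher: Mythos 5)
Your proposal is correct and takes essentially the same route as the paper: both extract the Hurwitz--Radon family of skew-symmetric signed permutation matrices supplied by Theorem~\ref{t6} (via Theorem~\ref{Eckmann} and the integer achievability from \cite{Geramita79}) and then translate the anticommutation relations $E_iE_j=-E_jE_i$ into the square (4-cycle) property of the corresponding 1-factors. The paper carries out this translation by a direct two-path vertex chase, following a basis vector $e_k$ under $J_{\omega_{m_\beta}}J_{\omega_{m_\alpha}}$ and $J_{\omega_{m_\alpha}}J_{\omega_{m_\beta}}$ and observing that anticommutation forces the two paths to close into a 4-cycle, which is precisely the content of your lemma that the 4-cycle condition amounts to commutation of the underlying fixed-point-free involutions; the two arguments differ only in packaging, not in substance.
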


\noindent
\begin{remark} In general $\rho(2^n)\leq 2^n$ with the equality only for $n=0,1,2,$ and $3$.
\end{remark}

\begin{proof}
In order to prove Theorem~\ref{tcd} let us start with a statement on interplay between the properties of the operator $J$ defined by \eqref{p1} and the cycle properties of the partitions $\mathfrak{p}'$.

Let the vectors $\omega_m=\sum_{(i,j)\in \mathfrak{p}_{l_m}}(e_i\times e_j)\}$, $\mathfrak{p}_{l_m}\in \mathfrak{p}'$ for  $m=m_1,\dots,m_p$, for some $1<p\leq 2k-1$, and let
$J_{\omega_{m_{\alpha}}}J_{\omega_{m_{\beta}}}=-J_{\omega_{m_{\beta}}}J_{\omega_{m_{\alpha}}}$ for every $\alpha,\beta\in \{1,\dots, p\}$ and $\alpha\neq \beta$. 
 Then  any two 1-factors $\mathfrak{p}_{\omega_{m_{\alpha}}}$ and $\mathfrak{p}_{\omega_{m_{\beta}}}$ set together give disjoint cycles of length 4.

Indeed, by the condition on $J_{\omega_{m_{\alpha}}}$ and $J_{\omega_{m_{\beta}}}$, and from the fact that  both operators act as signed permutations of the basis of $U$, we have
\[
\begin{array}{lcccr}
e_k&\stackrel{J_{\omega_{m_{\alpha}}}}\longrightarrow & \beta_1 e_s &\stackrel{J_{\omega_{m_{\beta}}}}\longrightarrow& \pm e_{\ell},\\
e_k&\stackrel{J_{\omega_{m_{\beta}}}}\longrightarrow & \beta_2 e_r &\stackrel{J_{\omega_{m_{\alpha}}}}\longrightarrow& \mp e_{\ell},
\end{array}
\]
where $\beta_1, \beta_2$ are +1 or -1.  Thus the vertices $k,\ell,s,r$ together with edges $ks$, $kr$, $sl$, $rl$ define a separate cycle in the  graph $\mathfrak{p}_{m_{\alpha}}\cup \mathfrak{p}_{m_{\beta}}$. The statement of Theorem~\ref{tcd} follows from Theorem~\ref{t6}  with $J_{w_{m_j}}$, where $\omega_{m_1},\ldots,\omega_{m_p}$ is an orthonormal basis of the center~$\mathfrak z$.
\end{proof}

\begin{remark} It follows from the proof, that if the center of a pseudo $H$-type algebra is of dimension $>1$, then the dimension of the horizontal space is a multiple of~4.
\end{remark}

\subsection{Space-time block codes, orthogonal designs, and  wireless communication}

Surprisingly, the above results have applications in wireless networks with multiple transmit  antennas, in which encoded signals undergo fading under local scattering, 
intersymbol interference under multipath propagation, the channels are time varying due to mobile motions, and co-channel interference due to cellular spectrum reuse. 
However, we use pocket communicators which must remain relatively simple commercially accessible and reliable in different type of environments.
The idea boiled down to {\it space-time block coding} for communication over Rayleigh fading channels. Data is encoded using space-time block coding and then 
splits into $n$ streams, which are then simultaneously transmitted via $n$ transmit antennas. The first two-branch transmit diversity scheme was proposed by Alamouti~\cite{Alamouti}
in 1998, and then, this scheme was generalized to $n$-brach scheme by Tarokh, Jafarkhani, and Calderbank~\cite{TJC}. The proposed construction was inductive, i.e., 
the bigger size block matrices were constructed using block matrices of smaller size. The direct method was proposed by Morier-Genoud and Ovsienko~\cite{MGO} in 2013.

Transmit diversity is provided by the simplest {\it linear processing orthogonal designs}  preserving the following properties
\begin{itemize}
\item No loss in bandwidth, i.e., maximum possible transmission rate at full diversity;
\item Maximum likelihood decoding algorithm at the receiver.
\end{itemize}

A model of transmitting process with space-time block coding can be formulated as follows. We want to transmit $n$ signals from $n$ transmitting antennas during the time
instances $t_1,\dots, t_T$ achieving maximal likelihood. Without loss of generality we assume that the receiving antenna is only one. Otherwise, the  maximum ratio combining
is applied, see e.g., \cite{Jaf}. Let us assume that the path gains from transmit antennas  to the receive antenna are $\mathbf{c}=(c_1,\dots, c_T)$, respectively. Then,  
the decoder receives signals $\mathbf{r}^j=(r^j_1,\dots, r^j_n)^T$ at each time $t_j$, $j=1,\dots, T$ as
\[
\mathbf{r}^j=\mathbf{c}W_j+\eta^j,
\] 
where $\eta^j$ is the Gaussian noise sample vector of the receive antenna and $W_j$ is a space time codeword, i.e., a $n\times T$ matrix with the entries $x_1,\dots,x_n$. Assume at this moment that $T=n$.
In order to satisfy the above property of full diversity and maximal likelihood, the matrices $W_j$ must form a real orthogonal design, see e.g., \cite[Section 4.4]{Jaf}. 

Orthogonal designs were introduced and studied in the 70's in a series of papers by Geramita and Seberry Wallis, summarised then in the monograph \cite{Geramita79}. 
They received much attention in relation with applications in wireless networks with multiple transmit  antennas. The idea boiled down to {\it space-time block coding} for communication over Rayleigh fading channels. Data is encoded using space-time block coding and then 
splits into $n$ streams, which are then simultaneously transmitted via $n$ transmit antennas, see e.g.,~\cite{Alamouti, MGO, TJC}.
For an account of orthogonal designs we refer to a comprehensive monograph~\cite{Geramita79}.

\begin{definition}
A linear processing  real  orthogonal design  in order $n$ of type $u_1,\dots, u_n$ on commuting variables $x_1,\dots, x_n$ is an $n \times n$ matrix $W$ with the entries \linebreak $\{0,\pm x_1,\dots, \pm x_n\}$, such that
\[
WW^T=\left(\sum_{k=1}^nu_k x_k^2\right)I,
\]
where $I$ is the  $n \times n$ unit matrix.
\end{definition}
A linear processing real orthogonal design  in order $n$ exists if and only if there exists  linear processing orthogonal design $\mathcal L$ in order $n$, such that 
\[
\mathcal L\mathcal L^T= \mathcal L^T\mathcal L=\left(\sum_{k=1}^n x_k^2\right)I,
\] 
see \cite[Theorem 3.4.1]{TJC}. Let
\[
\mathcal L=\sum_{k=1}^nx_kX_k,
\]
where the matrices $X_k$ are such that
\begin{itemize}
\item all entries are $\{0,\pm 1\}$;
\item the Hadamard (entrywise) product $X_k\circ X_j=0$ whenever $j\neq k$;
\item $X_kX_k^T=I$;
\item $X_kX_j^T+X_jX_k^T=0$.
\end{itemize}
Denote $E_k=X_1^TX_k$, $k=1,\dots, n$. Then, $E_1=I$, and $E_2,\dots,E_n$ form the HR family. The Hurwitz-Radon-Eckmann Theorem \ref{Eckmann} implies that the real orthogonal
design exists if and only if $n=2^r$ and $r=1,2,3$.

However, we may ask a question: how many non-zero entries $x_1,\dots, x_m$ one can transmit by making use of a linear processing real orthogonal design in order $n$?
A corollary from Theorem~\ref{t6} is the following result proved by Geramita and Seberry Wallis. 

\begin{theorem}[\cite{Geramita79} Corollary 1.4]\label{prop10}
A linear processing real orthogonal design in order $n=2k$ exists for at  most $m=\rho(n)$ non-zero entries. 
\end{theorem}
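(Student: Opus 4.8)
The plan is to reduce the existence of a linear processing real orthogonal design to the existence of a Hurwitz-Radon family, and then invoke the Hurwitz-Radon-Eckmann bound of Theorem~\ref{Eckmann}.

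First I would normalize. By the equivalence recalled just before the statement (see \cite[Theorem 3.4.1]{TJC}), a linear processing real orthogonal design in order $n$ on the variables $x_1,\dots,x_m$ exists if and only if there is a design $\mathcal L=\sum_{k=1}^m x_k X_k$ with $\mathcal L\mathcal L^T=\mathcal L^T\mathcal L=\left(\sum_{k=1}^m x_k^2\right)I$. Expanding $\mathcal L\mathcal L^T=\sum_{i,j}x_ix_j\,X_iX_j^T$ and comparing coefficients of the monomials $x_k^2$ and $x_ix_j$ with $i\neq j$, the coefficient matrices $X_1,\dots,X_m$ satisfy $X_kX_k^T=I$ and $X_iX_j^T+X_jX_i^T=0$ for $i\neq j$; doing the same with $\mathcal L^T\mathcal L$ yields in addition $X_k^TX_k=I$ and $X_i^TX_j+X_j^TX_i=0$ for $i\neq j$.

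Second, following the construction already indicated before the theorem, I would set $E_k:=X_1^TX_k$ for $k=1,\dots,m$, so that $E_1=I$. Each $E_k$ is orthogonal, since $E_k^TE_k=X_k^TX_1X_1^TX_k=X_k^TX_k=I$. For $k\geq 2$, conjugating the relation $X_1X_k^T+X_kX_1^T=0$ by $X_1^T$ on the left and $X_1$ on the right gives $X_1^TX_k+X_k^TX_1=0$, that is $E_k+E_k^T=0$; combined with orthogonality this yields $E_k^2=-E_kE_k^T=-I$. Likewise, for $2\leq i<j\leq m$, the relation $X_i^TX_j+X_j^TX_i=0$ reads $E_i^TE_j+E_j^TE_i=0$, and substituting $E_i^T=-E_i$, $E_j^T=-E_j$ gives $E_iE_j+E_jE_i=0$. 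Hence $\{E_2,\dots,E_m\}$ is a Hurwitz-Radon family of $m-1$ real orthogonal $n\times n$ matrices satisfying $E_j^2=-I$ and $E_iE_j=-E_jE_i$. Applying Theorem~\ref{Eckmann} with $n=2k$, such a family has at most $\rho(n)-1$ members, so $m-1\leq\rho(n)-1$, which gives $m\leq\rho(n)$, as claimed.

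The argument is essentially bookkeeping, and the one delicate point is that passing from the mixed-transpose relations satisfied by the $X_k$ to the clean Hurwitz-Radon relations $E_j^2=-I$, $E_iE_j=-E_jE_i$ genuinely requires both $\mathcal L\mathcal L^T$ and $\mathcal L^T\mathcal L$ to be scalar multiples of the identity; this is precisely why the preliminary reduction to the two-sided identity is indispensable. Once the family $\{E_2,\dots,E_m\}$ is in hand, the bound follows at once from Theorem~\ref{Eckmann}, exactly paralleling the correspondence between the operators $J_{\omega_l}$ and the orthogonal matrices $E_l$ exploited in Theorem~\ref{t6}.
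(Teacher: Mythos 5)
Your proof is correct and takes essentially the same route the paper itself sketches: normalize via \cite[Theorem 3.4.1]{TJC}, set $E_k=X_1^TX_k$, verify that $E_2,\dots,E_m$ is a Hurwitz--Radon family of orthogonal skew-symmetric anticommuting matrices, and apply Theorem~\ref{Eckmann} to get $m-1\leq\rho(n)-1$; you merely fill in the verification that the paper leaves implicit. One small quibble: your closing remark overstates the role of the two-sided condition, since for square matrices $\mathcal L\mathcal L^T=\bigl(\sum_k x_k^2\bigr)I$ alone already implies $\mathcal L^T\mathcal L=\bigl(\sum_k x_k^2\bigr)I$ (each $X_k$ is then orthogonal, and multiplying $X_iX_j^T+X_jX_i^T=0$ on the left by $X_j^T$ and on the right by $X_j$ yields $X_i^TX_j+X_j^TX_i=0$), so the two-sided hypothesis is convenient but not indispensable.
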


\begin{definition}
Two orthogonal designs $X$ and $Y$ of the same order are called amicable if $XY^T=YX^T$.
\end{definition}

Suppose that $X=\sum_{k=1}^sx_kX_k$ and $Y=\sum_{k=1}^ty_kY_k$ are two amicable orthogonal designs of order $n$ over variables $x_1,\dots, x_s$ and $y_1,\dots,y_t$ of type $u_1,\dots,u_s$ and $v_1,\dots,v_t$ respectively. So in addition to the matrices $X_k$, $k=1,\dots, s$ we operate with matrices $Y_j$, $j=1,\dots t$, such that
\begin{itemize}
\item all entries are $\{0,\pm 1\}$;
\item the Hadamard  product $Y_k\circ Y_j=0$ whenever $j\neq k$;
\item $Y_kY_k^T=-I_n$;
\item $Y_kY_j^T+Y_jY_k^T=0$;
\item $X_kY_j^T=Y_jX_k^T$.
\end{itemize}
Denote $E_k=X_1^TX_k$, $k=1,\dots, s$ and $E_k=Y_1^TY_k$, $k=s+2,\dots, s+t$. Then, the matrices $E_2,\dots,E_s,E_{s+2},\dots,E_{s+t}$ form the HR$(s-1,t-1)$ family.

Analogously to Theorem~\ref{prop10},   Wolfe \cite{Wolfe} proved the existence of amicable designs in
terms of a generalization of the Hurwitz-Radon function based on irreducible representation of Clifford algebras. The following question is natural to ask. Working with representations of Clifford algebras why we use the usual definition of orthogonality $E^TE=I$?
Our representation must act on the module as an isometry or anti-isometry with respect to a non-degenerate metric  $\eta$: 
\[
(E_ju,E_jv)=(u,v), \quad \text{for $j=2,\dots, s$;}\quad (E_ju,E_jv)=-(u,v), \quad \text{for $j=s+1,\dots, t$;}
\]
or equivalently
\[
 \eta E_j^T \eta=E_j^{-1},  \quad \text{for $j=2,\dots, s$;}\quad  \eta E_j^T \eta=-E_j^{-1}, \quad \text{for $j=s+1,\dots, t$}.
\]
Theorem~\ref{te6} and~\ref{te7} imply the following corollary related to Problem~5.17 from \cite[Section 5.3]{Geramita79} for even order  
amicable $\eta$-orthogonal designs.
 In particular, Tables~1 and~2
show  the number of  variables  in amicable designs of order $2k$, and when these designs do not exist  for some small dimensions. If order is odd, then $s,t\leq 1$, see \cite{Geramita79},
and this case is trivial.

\begin{theorem}\label{prop11} Given non-zero variables $y_1,\dots,y_t$ of an amicable $\eta$-orthogonal design of order $2k$, Theorem~\ref{te6} and Remark~\ref{rem4} give the maximal number of variables $x_1,\dots, x_s$, $s\leq \rho_t(2^{r})-m(t,r)+1$, where $k=u2^{r-1}$ and $u$ is odd. Analogously, given non-zero variables $x_1,\dots, x_s$  of an amicable $\eta$-orthogonal design of order $2n$, Theorem~\ref{te7} gives the maximal number of variables  $y_1,\dots,y_t$.
\end{theorem}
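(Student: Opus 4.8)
The plan is to translate the notion of an amicable $\eta$-orthogonal design into the language of pseudo $H$-type algebras and then read off both maximality statements as immediate consequences of Theorems~\ref{te6} and~\ref{te7}. The dictionary is exactly the one set up just before the theorem: from the defining matrices $X_1,\dots,X_s$ and $Y_1,\dots,Y_t$ of the design one forms $E_k=X_1^{*}X_k$ and $E_{s+k}=Y_1^{*}Y_k$, where $*$ denotes the $\eta$-adjoint, and thereby obtains an HR$(s-1,t-1)$ family in order $2k=u2^{r}$.

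First I would verify that this HR$(s-1,t-1)$ family is an admissible representation of the Clifford algebra $Cl_{s-1,t-1}$ on $U$, meaning each $E_\ell$ is $\eta$-skew-symmetric in the sense of \eqref{p4} and satisfies $E_\ell^2=\mp\id_U$. The squares follow from $X_kX_k^{*}=\id$ and $Y_kY_k^{*}=-\id$ together with the anticommuting relations, while the $\eta$-skew-symmetry follows from the amicability condition $X_kY_j^{*}=Y_jX_k^{*}$ and the pairwise relations; these are precisely the hypotheses of Proposition~\ref{propOrt}. Hence the data define a pseudo $H$-type algebra $\mathfrak n=\mathfrak h\oplus_\perp\mathfrak z$ with $\mathfrak h=U$ of dimension $u2^{r}$ and center $\mathfrak z$ of signature $(s-1,t-1)$, exactly as in the construction in the proof of Theorem~\ref{te6}. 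Conversely, any such pseudo $H$-type algebra yields an amicable $\eta$-orthogonal design by adjoining the reference matrices $X_1=\id$ and $Y_1$, so the correspondence matches designs with admissible HR families.

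Next I would fix the $t$ variables $y_1,\dots,y_t$, which fixes the anti-isometry part of the family, and maximize the number $s$ of variables $x_1,\dots,x_s$, i.e.\ the isometry part. Under the correspondence this is exactly the problem of maximizing the first component of the center signature with the second component fixed, which is answered by Theorem~\ref{te6}: the maximal value is $\rho_t(2^{r})-1$ when the relevant triplet satisfies the conditions of Theorem~\ref{TCiatti}, and $\rho_t(2^{r})-m(t,r)$ otherwise, where the reduction of the order $u2^{r}$ to $2^{r}$ is justified by Remark~\ref{rem4}. The extra $+1$ in the stated bound $s\le\rho_t(2^{r})-m(t,r)+1$ records the reference matrix $X_1=\id$, which is itself an admissible variable of the design but contributes only the trivial member $E_1=\id$ to the family. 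The second assertion is obtained symmetrically: fixing $x_1,\dots,x_s$ and maximizing the number of $y$-variables is the problem of maximizing the second component of the center signature, solved by Theorem~\ref{te7} with the roles of $s$ and $t$ exchanged.

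The main obstacle I anticipate is the careful bookkeeping of the index shifts together with the admissibility check. Passing from a representation of $Cl_{s-1,t-1}$ to a genuine pseudo $H$-type structure requires the module metric $\eta$ to make every $E_\ell$ $\eta$-skew-symmetric, not merely $\eta$-orthogonal or $\eta$-anti-orthogonal; by Proposition~\ref{pr4} this forces $(\cdot,\cdot)_U$ to be neutral, which is consistent with the hypotheses of Theorems~\ref{te6} and~\ref{te7} and must be invoked to guarantee that the maximal family is actually realizable. Keeping track of which reference matrices become the trivial members $E_1=\id$ and $E_{s+1}=-\id$, and hence of the precise $\pm1$ corrections relating the design parameters $(s,t)$ to the center signature and to the Hurwitz-Radon indices, is the only delicate point; once it is settled the bounds follow verbatim from Theorems~\ref{te6} and~\ref{te7}.
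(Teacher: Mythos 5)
Your overall route is the same as the paper's: convert the amicable $\eta$-orthogonal design into a Hurwitz--Radon family acting on $U$, identify that family with the operators $J_{\omega_\ell}$ spanning the center of a pseudo $H$-type algebra, and read the bound off Theorem~\ref{te6} (resp.\ Theorem~\ref{te7}) together with Remark~\ref{rem4}. However, there is a genuine off-by-one gap on the $Y$-side of your dictionary, and it cannot be absorbed by your ``$+1$ for the reference matrix $X_1$''. Because you normalize the $X$-family by $X_1$ and the $Y$-family by $Y_1$ \emph{separately} (following the admittedly misleading paragraph that precedes the theorem in the paper), both $E_1=X_1^{*}X_1$ and $E_{s+1}=Y_1^{*}Y_1$ degenerate to scalars, and you are left with an HR$(s-1,t-1)$ family, i.e.\ a center of signature $(s-1,t-1)$. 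Applying Theorem~\ref{te6} consistently to that algebra means fixing the second signature component at $t-1$, which yields $s-1\le \rho_{t-1}(2^{r})-m(r,t-1)$, hence $s\le \rho_{t-1}(2^{r})-m(r,t-1)+1$ --- not the asserted $s\le \rho_{t}(2^{r})-m(t,r)+1$. The subscript matters: by the recursion $\rho_t(2n)=\rho_{t-1}(n)+1$ one computes, e.g., $\rho_1(16)=\rho(8)+1=9$ while $\rho_2(16)=\rho_1(8)+1=6$, and the realizable maxima in Table~1 for $2^r=16$ decrease strictly in $t$ near these values. So your derivation, read consistently, proves only a strictly weaker inequality; quoting $\rho_t$ while your center has second index $t-1$ is precisely the error. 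The loss also breaks sharpness: from a maximal HR$(s-1,t-1)$ family one cannot in general adjoin a suitable extra anti-isometry $Y_1$ to recover a design with all $t$ variables $y_j$.

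The repair --- and this is what the paper's proof does, tersely --- is to normalize the whole design by a \emph{single} reference matrix: set $E_k=X_1^{*}X_k$ for $k=2,\dots,s$ and $E_{s+j}=X_1^{*}Y_j$ for $j=1,\dots,t$. Amicability $X_kY_j^{*}=Y_jX_k^{*}$ makes all of these anticommute pairwise; the first $s-1$ are $\eta$-skew with square $-\id_U$, while \emph{all} $t$ matrices coming from $Y$ survive as non-trivial symmetric-type members with square $+\id_U$. This is an HR$(s-1,t)$ family, hence (via Proposition~\ref{propOrt}, with neutrality of $(\cdot\,,\cdot)_U$ forced by Proposition~\ref{pr4}, as you correctly note) a pseudo $H$-type center of signature $(s-1,t)$. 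Now Theorem~\ref{te6}, applied with second index exactly $t$ and with Remark~\ref{rem4} reducing the order $2k=u2^{r}$ to $2^{r}$, gives $s-1\le \rho_t(2^{r})-m(r,t)$, which is the stated bound, and the existence part of Theorem~\ref{te6} gives its achievability; the second assertion follows symmetrically from Theorem~\ref{te7}. With this one change of dictionary, the rest of your argument (the admissibility check and the converse construction) goes through.
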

\begin{proof}
Assume that $e_1,\dots, e_{2k}$ is the basis of a vector space $U$ and $y_1,\dots,y_t$ are variables of the given design $Y$ and $x_1,\dots, x_s$ are non-zero entries of the amicable design $X$. Then the matrices $X_1,\dots, X_s$ and $Y_1,\dots, Y_t$ of dimension
 $(2k\times 2k)$ defined above act on $U$. The matrices  $E_2,\dots,E_s,E_{s+1},\dots,E_{s+t}$ form the HR$(s-1,t)$ that form the HR$(s-1,t)$ family and represent the basis of the center of a pseudo $H$-type algebra. Theorem~\ref{te6}
 implies that $s\leq \rho_t(2^{r})-m(r,t)+1$ and the upper bound is achieved. Analogously, the second part of the theorem follows from Theorem~\ref{te7}.
\end{proof}

\end{document}